\def\dfrac{\displaystyle\frac}
\def\dsum{\displaystyle\sum}
\def\dmax{\displaystyle\max}
\newtheorem{prop}{Proposition}
\newtheorem{theo}[prop]{Theorem}
\newtheorem{lemm}[prop]{Lemma}
\newtheorem{coro}[prop]{Corollary}
\newtheorem{rema}[prop]{Remark}
\newtheorem{defi}[prop]{Definition}
\newcommand{\al}{\alpha}
\newcommand{\abc}[1]{\left( #1 \right)}
\newcommand{\abz}[1]{\left[ #1 \right]}
\renewcommand{\leq}{\leqslant}
\renewcommand{\geq}{\geqslant}
\newcommand{\p}{\partial}
\numberwithin{equation}{section}
\title{Global $C^2$ estimates for convex solutions of curvature equations}
\author{Pengfei Guan}
\address{Department of Mathematics and Statistics\\ McGill University \\ Montreal, Canada}
\email{guan@math.mcgill.ca}
\author{Changyu Ren}
\address{School of Mathematical Science  \\ Jilin University \\ Changchun, China} \email{rency@jlu.edu.cn}
\author{Zhizhang Wang}
\address{Department of Mathematics and Statistics\\ McGill University \\ Montreal, Canada}
\email{zwang@math.mcgill.ca}
\thanks{Research of the first author was supported in part by an NSERC Discovery Grant. Research of the third author was supported by a CRC postdoctoral fellowship.}
\begin{document}
\begin{abstract}
We establish $C^2$ a priori estimate for convex hypersurfaces whose principal curvatures $\kappa=(\kappa_1,\cdots, \kappa_n)$ satisfying Weingarten curvature equation $\sigma_k(\kappa(X))=f(X,\nu(X))$. We also obtain such estimate for admissible $2$-convex hypersurfaces in the case $k=2$. Our estimates resolve a longstanding problem in geometric fully nonlinear elliptic equations considered in \cite{A2, I1, I, GLL}.
\end{abstract}
\subjclass{53C23, 35J60, 53C42}
\maketitle
\section{introduction}

This paper concerns a longstanding problem of the global $C^2$ estimates for the Weingarten curvature equation in general form
\begin{eqnarray}\label{1.1}
\sigma_k(\kappa(X))=f(X,\nu(X)), \ \ \forall  X\in M,
\end{eqnarray}
where $\sigma_k$ is the $k$th elementary symmetric function, $\nu(X)$ is the outer-normal and $ \kappa(X)=(\kappa_1,\cdots,\kappa_n)$ are principal curvatures of hypersurface $M\subset \mathbb R^{n+1}$ at $X$. The mean curvature, scalar curvature and Gauss curvature correspond to $k=1,2$ and $n$, respectively.

Equation (\ref{1.1}) is associated with many important geometric problems. The Minkowski problem (\cite{N, P1, P3, CY}), the problem of prescribing general Weingarten curvature on outer normals by Alexandrov \cite{A2, gg}, the problem of prescribing curvature measures in convex geometry \cite{A1, P1, GLM, GLL}), the prescribing curvature problem considered \cite{BK, TW, CNS5}, all these geometric problems fall into equation (\ref{1.1}) with special form of $f$ respectively.
Equation (\ref{1.1}) has been studied extensively, it is a special type of general equations systemically studied by Alexandrov in \cite{A2}. When $k=1$, equation (\ref{1.1}) is quasilinear, $C^2$ estimate follows from the classical theory of quasilinear PDE.  The equation is of Monge-Amp\`ere type if $k=n$, $C^2$ estimate in this case for general $f(X, \nu)$ is due to Caffarelli-Nirenberg-Spruck \cite{CNS1}. For the intermediate cases $1<k<n$, $C^2$ estimates have been proved in some special cases.  When $f$ is independent of normal vector $\nu$, $C^2$ estimate has been proved by Caffralli-Nirenberg-Spruck \cite{CNS5} for a general class of fully nonlinear operators $F$, including $F=\sigma_k, F=\frac{\sigma_k}{\sigma_l}$. If $f$ in (\ref{1.1}) depends only on $\nu$, $C^2$ estimate was proved in \cite{gg}. Ivochkina \cite{I1, I} considered the Dirichlet problem of equation (\ref{1.1}) on domains in $\mathbb R^n$, $C^2$ estimate was proved there under some extra conditions on the dependence of $f$ on $\nu$. $C^2$ estimate was also proved for equation of prescribing curvature measures problem in \cite{GLM, GLL}, where $f(X,\nu)= \langle X,\nu \rangle \tilde f(X)$.

\medskip

We establish $C^2$ estimate for convex solutions of equation (\ref{1.1}) for $1<k<n$ and $C^2$ estimate for admissible solutions of equation (\ref{1.1}) for $k=2$.
$C^2$ estimates for equation (\ref{1.1}) is equivalent to the curvature estimates from above for $\kappa_1, \cdots, \kappa_n$. We state the main results of this paper.

\begin{theo}\label{theo 11}
Suppose $M\subset \mathbb R^{n+1}$ is a closed convex hypersurface satisfying curvature equation (\ref{1.1}) for some positive function $f(X, \nu)\in C^{2}(\Gamma)$, where $\Gamma$ is an open neighborhood of unit normal bundle of $M$ in $\mathbb R^{n+1} \times \mathbb S^n$, suppose $M$ encloses a ball of radius $r_0>0$, then there is a constant $C$ depending only on $n, k$, $r_0$, $\inf f$ and $\|f\|_{C^2}$, such that
 \begin{equation}\label{Mc2}
 \max_{X\in M, i=1,\cdots, n} \kappa_i(X) \le C.\end{equation}
\end{theo}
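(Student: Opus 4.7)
The goal is to bound $\kappa_1(X):=\max_i \kappa_i(X)$ from above uniformly on $M$. Since $M$ is convex and encloses a ball of radius $r_0$, translating the origin to the center of that ball gives $h(X):=\langle X,\nu(X)\rangle \geq r_0>0$ everywhere. My plan is to run the maximum principle on a test function of the form
$$\Phi(X,\xi)=\log b_{\xi\xi}(X)+\varphi\bigl(h(X)\bigr),\qquad \xi\in T_X M,\ |\xi|=1,$$
on the unit tangent bundle of $M$, where $b_{\xi\xi}$ is the second fundamental form in direction $\xi$ and $\varphi$ is an auxiliary function to be chosen (such as a negative multiple of $\log(h-r_0/2)$). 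Let $\Phi$ achieve its maximum at a point $X_0$ in direction $\xi_0=e_1$, working in an orthonormal frame at $X_0$ that diagonalizes $(h_{ij})$ with $h_{11}=\kappa_1$ the largest eigenvalue; the standard eigenvalue-multiplicity perturbation reduces the analysis to $\Phi=\log h_{11}+\varphi(h)$ near $X_0$.

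The first-order critical condition gives $h_{11,i}/h_{11}+\varphi'(h)h_i=0$, and the second-order inequality, combined with two covariant differentiations of $\sigma_k(\kappa)=f(X,\nu)$ in the direction $e_1$ (using Codazzi and the Gauss identity on $M$ to commute derivatives), yields a Simons-type inequality for $F^{ij}\Phi_{ij}$ with $F^{ij}=\partial\sigma_k/\partial h_{ij}$. The dangerous terms come from differentiating $f(X,\nu)$ twice along $M$: since $\nabla_j\nu=h_{jk}e_k$, one finds $\nu_{11}=h_{11,j}e_j-h_{11}^2\nu+\cdots$, so $f_\nu\cdot\nu_{11}$ produces an uncontrolled $\kappa_1^2$ term on the right-hand side. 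This is precisely the obstruction that has kept the problem open in the intermediate cases $1<k<n$ when $f$ depends genuinely on $\nu$; when $f$ is independent of $\nu$ or depends only on $\nu$, these terms vanish or are directly manageable, consistent with the partial results in \cite{CNS5, gg}.

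The crux is to absorb this $\kappa_1^2$ term, via two ingredients. First, concavity of $\sigma_k^{1/k}$ on the positive cone yields $-F^{ij,kl}h_{ij,1}h_{kl,1}\geq 0$, and a careful splitting of the third-order derivatives $h_{ij,1}$ into the $h_{11,1}$ part and perpendicular $h_{1i,1}$ ($i>1$) parts produces a positive gradient remainder of order $\sum_{i>1}F^{ii}h_{1i,1}^2/\kappa_1$ that cancels a portion of the bad term. Second, the auxiliary term $\varphi'(h)F^{ij}h_{ij}=\varphi'(h)\,k\,f$ coming from $\varphi$ produces, through the Newton--MacLaurin-type inequality $\sum_i F^{ii}\kappa_i^2\geq \kappa_1\cdot k\,\sigma_k(\kappa)$, a large negative multiple of $\kappa_1$; it is here that convexity of $M$ (all $\kappa_i>0$) enters decisively, not only for ellipticity of $F^{ij}$ but to enforce this strong comparison. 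Choosing $\varphi$ and its derivatives in terms of $r_0$, $\inf f$, and $\|f\|_{C^2}$ so that the total coefficient of $\kappa_1^2$ is strictly negative gives $h_{11}(X_0)\leq C$, which is (\ref{Mc2}). The hardest and main obstacle is precisely this final absorption step: it is the quantitative exploitation of convexity, beyond mere admissibility in the $\sigma_k$-cone, that seems to be required to defeat the $\kappa_1^2$ coming from $f_\nu\cdot\nu_{11}$ without losing a power of $\kappa_1$.
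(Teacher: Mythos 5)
Your proposal follows the classical Pogorelov-type approach: take $\Phi=\log b_{\xi\xi}+\varphi(u)$ (with $b_{\xi\xi}$ the largest principal curvature at a maximum point), differentiate twice, contract with $\sigma_k^{ij}$, and try to absorb the $\kappa_1^2$ coming from $f_{\nu\nu}(\nu_1,\nu_1)$ and $f_\nu\cdot\nu_{11}$. This is \emph{not} the route the paper takes, and the gap is precisely the ``final absorption step'' that you flag as the hardest part and then assert can be made to work. It cannot, at least not with this test function, and this is what kept the problem open.

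Here is the concrete obstruction. After commuting $h_{11,ii}\to h_{ii,11}$ via Gauss, the differentiated equation contributes two terms of the same order $\kappa_1$: a favourable $+\kappa_1 kf$ from the Gauss commutator, and an unfavourable $-C\kappa_1$ from $-h_{11}^2 d_\nu f(\nu)/\kappa_1$, where $C$ involves $\|d_\nu f\|$; the auxiliary term $\varphi$ produces $-\varphi'(u)u\sum_i\sigma_k^{ii}\kappa_i^2\geq \mathrm{const}\cdot\frac{k}{n}f\,\kappa_1$ (note the factor $1/n$, and note this is linear, not quadratic, in $\kappa_1$). At the same time, the critical-point relation $h_{11,i}=-\kappa_1\varphi'(u)u_i$ forces the second-order-loss term $-\sum_i\sigma_k^{ii}h_{11,i}^2/\kappa_1^2=-(\varphi')^2\sum_i\sigma_k^{ii}u_i^2$, which for a convex hypersurface with one large curvature is also of order $\kappa_1$ but carries a factor $(\varphi')^2\sim N^2$, whereas the helpful $\varphi$-term carries only a factor $N$. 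There is no uniform choice of $N$ (or of $\varphi$) depending only on $n,k,r_0,\inf f,\|f\|_{C^2}$ that wins both comparisons; the positive contributions you invoke — the off-diagonal $\sum_{p\neq q}\sigma_k^{pp,qq}h_{pq1}^2$ from concavity and the $\sum_{k>1}h_{1k,i}^2/(\kappa_1-\kappa_k)$ from $\lambda_1''$ — are insufficient precisely in the regime where several $\kappa_i$ are comparable to $\kappa_1$, which is exactly the hard case the paper's Lemmas 4.2--4.4 are built to handle.

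The paper's actual proof replaces $\log\kappa_1$ by $\log P$ with $P=\sum_l\kappa_l^2=\sigma_1^2-2\sigma_2$, a \emph{symmetric nonlinear} curvature function, and uses the barrier $-2N\log u$. Differentiating $\log P$ produces, beyond the linear term $\frac{1}{P}\sum_k\kappa_k h_{kk,ii}$, the genuinely new positive contributions $\frac{1}{P}(\sum_k h_{kki}^2+\sum_{p\neq q}h_{pqi}^2)$ from the Hessian of $P$ in eigenvalue space. After contracting, these become the quantities $B_i+C_i+D_i$ (grouped against $A_i$ from concavity and $E_i=\frac{2\sigma_k^{ii}}{P^2}(\sum_j\kappa_j h_{jji})^2$ from $-(P_i/P)^2$), and the core of the proof is the intricate case analysis of Lemmas 4.2, 4.3, and Corollary 4.4 showing $\sum_i(A_i+B_i+C_i+D_i-E_i)\geq 0$ whenever some ratio $\kappa_{i+1}/\kappa_1$ is small, while the complementary case $\kappa_k\geq\delta_k\kappa_1$ is handled directly from the equation $\sigma_k=f$ bounded. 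None of this structure is available with $\log b_{\xi\xi}$, because the largest eigenvalue is not a smooth symmetric function and, more to the point, lacks the $\sum_k h_{kki}^2$-type convexity that is doing the heavy lifting here. So your outline identifies the right obstruction but does not overcome it; the missing idea is the specific choice of $P$, which the paper singles out as its novelty.

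Two smaller inaccuracies: you write $\sum_iF^{ii}\kappa_i^2\geq\kappa_1 k\sigma_k$; what actually holds is $\sum_i\sigma_k^{ii}\kappa_i^2\geq\sigma_k^{11}\kappa_1^2\geq\frac{k}{n}\kappa_1\sigma_k$ (the dimensional constant matters in the bookkeeping). And $\varphi'(h)F^{ij}h_{ij}=\varphi'kf$ is a different (bounded) quantity from the $\sum_iF^{ii}\kappa_i^2$ term, which arises from the $-h_{ii}^2u$ part of $u_{ii}$.
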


Estimate (\ref{Mc2}) is special to equation (\ref{1.1}). One may ask if estimate (\ref{Mc2}) can be generalized to this type of curvature equations when $f$ depends on $(X,\nu)$ as in (\ref{1.1}). The answer is {\it no} in general.

\begin{theo}\label{theo 11-counter}
For each $1\le l< k\le n$, there exist $C>0, r_0>0$ and a sequence of smooth positive functions $f_t(X,\nu)$ with
\[\|f_t\|_{C^3(\mathbb R^{n+1}\times \mathbb S^n)}+\|\frac1{f_t}\|_{C^3(\mathbb R^{n+1}\times \mathbb S^n)}\le C,\]
and a sequence of strictly convex hypersurface $M_t\subset \mathbb R^{n+1}$ with  $M_t$ encloses $B_{r_0}(0)$ satisfying quotient of curvatures equation
\begin{equation}\label{cur-qq}
\frac{\sigma_{k}}{\sigma_{l}}(\kappa)=f_t(X,\nu),\end{equation}
such that estimate (\ref{Mc2}) fails.
\end{theo}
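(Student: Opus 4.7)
The plan is to construct an explicit family of smooth, strictly convex hypersurfaces of revolution $M_t \subset \mathbb R^{n+1}$ whose generating profile has a very large second derivative concentrated in a small interval. This produces one principal curvature that blows up as $t \to \infty$ while the remaining $n-1$ stay bounded, so (\ref{Mc2}) fails, yet the resulting quotient stays smooth and uniformly bounded above and below. The key structural observation is that, in terms of the radii of curvature $r_i = 1/\kappa_i$,
\[\frac{\sigma_k(\kappa)}{\sigma_l(\kappa)} = \frac{e_{n-k}(r)}{e_{n-l}(r)},\]
which extends smoothly to the regime where a single $r_i$ tends to $0$, provided the remaining radii stay bounded away from $0$.

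For the construction, fix a nonnegative even $\psi \in C^\infty_c(\mathbb R)$ with $\psi'' \ge 0$ and $\psi''(0) > 0$, set $\phi_0(s) = \sqrt{1-s^2}$, and define
\[\phi_t(s) = \phi_0(s) - \tfrac{1}{t}\,\psi(ts), \qquad s \in [-1,1].\]
Let $M_t$ be the hypersurface in $\mathbb R^{n+1}$ obtained by rotating $y = \phi_t(s)$ about the $s$-axis. Since $\phi_0'' \le 0$ and $\psi'' \ge 0$, $\phi_t'' = \phi_0'' - t\psi''(ts) \le 0$, so $\phi_t$ is concave and $M_t$ is smooth and strictly convex; the perturbation size is $O(1/t)$, hence $M_t$ encloses a fixed $B_{r_0}(0)$ once $t$ is large. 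By rotational symmetry the meridional principal curvature is $\kappa_m = -\phi_t''/(1+\phi_t'^2)^{3/2}$ and the tangential one (with multiplicity $n-1$) is $\kappa_\tau = 1/(\phi_t\sqrt{1+\phi_t'^2})$. At $s = 0$: $\phi_t'(0) = 0$, $\phi_t(0) = 1 - \psi(0)/t \to 1$, and $\phi_t''(0) \sim -t$, so $\kappa_m(0) \sim t \to \infty$ while $\kappa_\tau(0) \to 1$. Thus $\max_i \kappa_i(M_t) \to \infty$ and (\ref{Mc2}) fails.

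For the bound on the quotient, at every point $M_t$ has one possibly small radius $r_m$ and $n-1$ equal radii $r_\tau$ near $1$, so
\[\frac{\sigma_k}{\sigma_l} = \frac{\binom{n-1}{k}\, r_m r_\tau^{n-k-1} + \binom{n-1}{k-1}\, r_\tau^{n-k}}{\binom{n-1}{l}\, r_m r_\tau^{n-l-1} + \binom{n-1}{l-1}\, r_\tau^{n-l}},\]
a smooth function of $(r_m, r_\tau)$ in a neighborhood of $(0,1)$ taking values in a fixed compact subset of $(0,\infty)$. By symmetry this quotient depends only on the axial component $\nu_1$ of the unit normal; denote this restriction $F_t(\nu_1)$ and set $f_t(X,\nu) := F_t(\nu_1)$ on $\mathbb R^{n+1} \times \mathbb S^n$.

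The main technical obstacle is the uniform estimate $\|F_t\|_{C^3(\mathbb S^n)} + \|1/F_t\|_{C^3(\mathbb S^n)} \le C$. A naive computation in the intrinsic parameter $s$ gives $|\phi_t^{(j)}| \lesssim t^{j-1}$ for $j \ge 2$, so derivatives of $r_m$ with respect to $s$ are unbounded. The saving grace is that the Gauss map satisfies $d\nu_1/ds = \kappa_m \sim t$ in the perturbation zone, hence $ds/d\nu_1$ together with its derivatives of order $\le 3$ are $O(1/t)$ there. Combining this with the explicit smooth dependence of $(r_m, r_\tau)$ on $(s, \phi_t', \phi_t'')$ and of the quotient on $(r_m, r_\tau)$, a short induction shows the blow-ups cancel, yielding uniform $C^3$ bounds on $F_t$; since $F_t$ stays uniformly bounded away from $0$, $1/F_t$ is likewise uniformly $C^3$.
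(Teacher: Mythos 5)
Your overall strategy—construct a degenerating family of convex hypersurfaces in which one principal curvature blows up while the quotient of radii of curvature, and hence $\sigma_k/\sigma_l$, stays smooth and bounded—is exactly the conceptual skeleton of the paper's argument, and your identity $\sigma_k(\kappa)/\sigma_l(\kappa)=e_{n-k}(r)/e_{n-l}(r)$ is the same algebraic fact the paper uses via $W_u$. However, the explicit construction you propose contains a genuine gap that cannot be patched by a small adjustment.

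The bump function you fix at the outset does not exist. A nonnegative $\psi\in C^\infty_c(\mathbb R)$ with $\psi''\ge 0$ everywhere must be identically zero: $\psi''\ge0$ makes $\psi'$ nondecreasing, and since $\psi'\to 0$ at $\pm\infty$ this forces $\psi'\equiv 0$, hence $\psi$ constant and, by compact support, $\psi\equiv 0$; this contradicts $\psi''(0)>0$. Consequently, the concavity argument $\phi_t''=\phi_0''-t\psi''(ts)\le 0$ is vacuous. Worse, if one drops the requirement $\psi''\ge 0$ and only assumes $\psi''\ge -M$ somewhere on its support (which is unavoidable), then on the set where $\psi''(ts)<0$ one needs $t\,\psi''(ts)\ge \phi_0''(s)$, i.e.\ $tM\le |\phi_0''(s)|$, which fails once $t$ is large. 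So the perturbed profile actually \emph{loses} concavity for large $t$, and the family $M_t$ is not strictly convex in precisely the regime you need. Rescaling the amplitude (e.g.\ $t^{-2}\psi(ts)$) restores concavity but then $\phi_t''(0)$ stays bounded and no curvature blows up.

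A secondary concern is the final paragraph: the assertion that "a short induction shows the blow-ups cancel" when passing to the Gauss-map parameter $\nu_1$ is precisely where the analysis becomes delicate, and it is not substantiated. The quantities $\kappa_m^{(j)}(s)$ grow like $t^{j+1}$, and although the Jacobian $d\nu_1/ds=\kappa_m\sim t$ helps at first order, the higher-order chain rule produces sums of terms each of which can individually be large, and the claimed cancellations need to be exhibited. The paper sidesteps this entirely by working with the support function: it takes $u$ on $\mathbb S^n$ with $W_u\in\Gamma_{n-1}$ but $\sigma_n(W_u)<0$ somewhere, sets $u_t=(1-t)+tu$ for $t<t_0$, and observes that $W_{u_t}$ stays in $C^\infty(\mathbb S^n)$ with $C^3$ norm controlled linearly in $t$, so $f_t=\sigma_{n-k}(W_{u_t})/\sigma_{n-l}(W_{u_t})$ is automatically bounded in $C^3$ above and below; convexity holds for $t<t_0$ by construction and fails exactly as $t\to t_0$ because $\det W_{u_{t_0}}$ vanishes. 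This is the clean way to secure both the uniform $C^3$ bound and the degeneration, and your construction would likely be easiest to repair by re-expressing it in terms of the support function rather than the profile.
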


\medskip

It is desirable to drop the convexity assumption in Theorem \ref{theo 11}. In the case of scalar curvature equation ($k=2$), we establish estimate (\ref{Mc2}) for starshaped admissible solutions of equation (\ref{1.1}). The general case $2<k<n$ is still open.

\medskip

Following \cite{CNS3}, we define
\begin{defi}\label{k-convex} For a domain $\Omega\subset \mathbb R^n$, a function $v\in C^2(\Omega)$ is called $k$-convex if the eigenvalues $\lambda (x)=(\lambda_1(x), \cdots, \lambda_n(x))$ of the hessian $\nabla^2 v(x)$ is in $\Gamma_k$ for all $x\in \Omega$, where $\Gamma_k$ is the Garding's cone
\[\Gamma_k=\{\lambda \in \mathbb R^n \ | \quad \sigma_m(\lambda)>0, \quad  m=1,\cdots,k\}.\]

A $C^2$ regular hypersurface $M\subset \mathbb R^{n+1}$ is
$k$-convex if $\kappa(X)\in \Gamma_k$
for all $X\in M$.
 \end{defi}

\begin{theo}\label{theo2 11}
Suppose $k=2$ and suppose $M\subset \mathbb R^{n+1}$ is a closed starshaped $2$-convex hypersurface satisfying curvature equation (\ref{1.1}) for some positive function $f(X, \nu)\in C^{2}(\Gamma)$, where $\Gamma$ is an open neighborhood of unit normal bundle of $M$ in $\mathbb R^{n+1} \times \mathbb S^n$, suppose $M=\{\rho(x)x | x\in \mathbb S^n\}$ is starshaped with respect to the origin, then there is a constant $C$ depending only on $n, k$, $\inf \rho$, $\|\rho\|_{C^1}$, $\inf f$ and $\|f\|_{C^2}$, such that
 \begin{equation}\label{M2c2}
 \max_{X\in M,  i=1,\cdots, n} \kappa_i(X)  \le C.\end{equation}
\end{theo}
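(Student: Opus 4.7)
The plan is to bound the largest principal curvature $\kappa_{\max}(X)$ by a maximum-principle argument on an auxiliary function. Since $M$ is starshaped with respect to the origin, the support function $u(X)=\langle X,\nu(X)\rangle$ is bounded between positive constants $c_0,C_0$ depending only on $\inf\rho$ and $\|\rho\|_{C^1}$. I would take the test function
\[
W(X)=\log h^1_1(X)+\phi(|X|^2/2)+\psi(u(X)),
\]
where $h^1_1$ is the largest eigenvalue of the second fundamental form (with the standard perturbation at repeated eigenvalues), and $\phi,\psi$ are auxiliary functions to be tuned, for instance $\phi(t)=Nt$ with $N$ large and $\psi$ a bounded function exploiting the uniform lower bound $u\ge c_0$.

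At a maximum point $X_0$, pick an orthonormal frame diagonalizing $(h_{ij})$ with $h_{11}=\kappa_{\max}$. The linearization $F^{ij}:=\sigma_2^{ij}=\sigma_1\delta_{ij}-h_{ij}$ is positive definite on $\Gamma_2$, and by Newton--Maclaurin $\sigma_1\geq c(n)\sqrt{\inf f}>0$, giving coercivity of the trace $F^{ii}=(n-1)\sigma_1$. Using $F^{ij}W_{ij}(X_0)\le 0$, differentiating the equation $\sigma_2(\kappa)=f(X,\nu)$ twice, and applying the Codazzi and Ricci commutator identities to interchange $h_{11,ii}$ with $h_{ii,11}$, one isolates three contributions: the concavity correction $-h_{11}^{-1}F^{ij,pq}h_{ij,1}h_{pq,1}$ (which for $\sigma_2$ has an explicit mixed-sign form in $(h_{ij,1})^2$ and $h_{ii,1}h_{jj,1}$); the extrinsic contributions $\phi'F^{ij}(|X|^2/2)_{ij}$ and $\psi'F^{ij}u_{ij}$, computed from $(|X|^2/2)_{ij}=\delta_{ij}-u\,h_{ij}$ and $u_{ij}=h_{ij}+\langle X,e_k\rangle h_{ij,k}-u\,h_{ik}h_{kj}$; and the derivatives of the right-hand side, the critical piece being $h_{11}^{-1}f_{\nu^k\nu^l}h_{k1}h_{l1}=f_{\nu^1\nu^1}\kappa_{\max}$, a bad term linear in $\kappa_{\max}$ arising because $\nu^k_{,1}=-h_{k1}$. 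The critical-point identity $h_{11,k}/h_{11}=-\phi'\langle X,e_k\rangle-\psi'u_k$ is then substituted both into the once-differentiated equation $F^{ij}h_{ij,k}=f_{x_k}+f_{\nu^l}\nu^l_{,k}$ and into the concavity correction, to rewrite all gradient quantities in terms of $\phi',\psi'$ and lower-order data.

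The main obstacle is that $\sigma_2$ is not concave in $h_{ij}$, and the concavity correction contains a sign-indefinite piece $-2h_{11}^{-1}\sum_{i<j}h_{ii,1}h_{jj,1}$ that can compete in size with the coercive contribution from $\phi'$. The key case analysis is by the spread of eigenvalues at $X_0$: when $\kappa_{\max}$ dominates all other $\kappa_i$ for $i\ge 2$, the $2$-convexity together with Newton--Maclaurin forces $F^{ii}\gtrsim\sigma_1\gtrsim\kappa_{\max}$ for $i\ge 2$, and hence $NF^{ii}\sigma_1$ absorbs the $\kappa_{\max}$-scale bad terms for $N$ large; when some $\kappa_i$ with $i\ge 2$ is comparable to $\kappa_{\max}$, the off-diagonal pieces $(h_{ij,1})^2$ in the concavity correction are activated, and one uses Cauchy--Schwarz against the once-differentiated equation, together with the positivity of $F^{11}=\sigma_1-\kappa_{\max}$ in the remaining regime, to absorb the diagonal cross-terms. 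Choosing $\phi,\psi$ and the constant $N=N(n,c_0,\inf f,\|f\|_{C^2})$ large enough then forces $\kappa_{\max}(X_0)\le C$, completing the proof.
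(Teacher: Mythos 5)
The proposal follows the same broad maximum-principle outline as the paper, but it proposes the \emph{standard} test function $\log h^1_1 + \phi(|X|^2/2)+\psi(u)$, which is precisely what the authors abandon in Section~3 in favor of the nonlinear auxiliary quantity $\log\log\sum_l e^{\kappa_l}$. The paper singles this out as the key novelty (``A new feature here is to consider a nonlinear test function $\log\sum_l e^{\kappa_l}$''), and your write-up does not explain how the standard choice escapes the difficulty that forced that change.

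The specific gap is in the $i=1$ accounting. Unlike the Hessian model in Section~2, the curvature setting produces the Ricci commutation term $h_{11,ii}=h_{ii,11}+h_{11}h_{ii}(h_{11}-h_{ii})$, whose contraction contributes $-\sigma_2^{ii}h_{ii}^2$ after dividing by $h_{11}$. To dominate this you need $\psi$ with $-\psi'u>1$, i.e.\ $|\psi'|>1/u$, a constraint that is \emph{not} present in the Hessian case (there the analogous role is played by $\tfrac{\varepsilon}{2}|Du|^2$ with $\varepsilon$ a free small parameter). At the critical point, $h_{111}/h_{11}=-\langle X,e_1\rangle(\phi'+\psi'h_{11})\sim-\psi'h_{11}\langle X,e_1\rangle$, so $-\sigma_2^{11}h_{111}^2/h_{11}^2\sim-(\psi')^2\,\sigma_2^{11}h_{11}^2\langle X,e_1\rangle^2$, while the only compensating term of the same order comes from $\psi''$ and equals $\psi''\,\sigma_2^{11}h_{11}^2\langle X,e_1\rangle^2$. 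For the natural choice $\psi=-(1+\varepsilon)\log u$ one has $\psi''-(\psi')^2=(1+\varepsilon)/u^2-(1+\varepsilon)^2/u^2<0$, so the net coefficient is negative. This is exactly the obstruction the paper circumvents: the double logarithm $\log\log P$ introduces the extra factor $\log P$ in the critical identity, and, together with Lemma~\ref{exch} and the case analysis of Lemmas~\ref{elem 9}--\ref{elem 11}, produces the positive margin $\delta_j>0$ that makes the coefficient $(1+\varepsilon)-(1-\delta_j)(1+\varepsilon)^2$ positive for small $\varepsilon$. Your proposal neither introduces an analogous margin nor invokes a concavity inequality controlling $h_{111}^2$ (as in Lemma~\ref{exch}), so the decisive absorption for $i=1$ is not established. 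Even if a more exotic choice of $\psi$ could in principle thread the needle between $-\psi'u>1$ and $\psi''-(\psi')^2>0$ (this is only barely possible, with margins vanishing together), you do not propose one, and the remaining case-by-case estimates for $i\ge 2$ with $2$-convexity alone (where $\sigma_2^{ii}$ can exceed $2\kappa_1$) would also need the extra structure. As it stands, the central step of the proof is missing.
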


Theorem \ref{theo 11} and Theorem \ref{theo2 11} are stated for compact hypersurfaces, the corresponding estimates hold for solutions of equation (\ref{1.1}) with boundary conditions, with $C$ in the right hand side of (\ref{Mc2}) and (\ref{M2c2}) depending $C^2$ norm on the boundary in addition.

The proof of above two theorems relies on maximum principles for appropriate curvature functions. The novelty of this paper is the discovery of some new test curvature functions. They are nonlinear in terms of the principal curvatures with some good convexity properties.

\medskip

With appropriate barrier conditions on function $f$, one may establish existence results of the prescribing curvature problem (\ref{1.1}) in general.

\begin{theo}\label{k-exist} Suppose $f\in C^2(\mathbb R^{n+1}\times \mathbb S^n)$ is a positive function and suppose there is a constant $r>1$ such that,
\begin{eqnarray}\label{4.101}
f(X,\frac{X}{|X|})\leq \frac{\sigma_k(1,\cdots, 1)}{r^k}\ \ \text{ for }\ \   |X|=r ,
\end{eqnarray}
 and $f^{-1/k}(X,\nu)$ is a locally convex in $X \in B_r(0)$  for any fixed $\nu\in \mathbb S^n$, then equation (\ref{1.1}) has a strictly convex $C^{3,\alpha}$ solution inside $\bar B_r$.
\end{theo}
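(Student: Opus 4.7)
The plan is the continuity method along the homotopy
\[
f_t^{-1/k}(X,\nu)=(1-t)\,c_0^{-1/k}+t\,f^{-1/k}(X,\nu),\qquad t\in[0,1],
\]
with $c_0=\sigma_k(1,\ldots,1)/r^k$, interpolating between the constant $f_0\equiv c_0$ and $f_1=f$. Each $f_t^{-1/k}(\cdot,\nu)$ is convex in $X$ as a convex combination of convex functions, and the barrier (\ref{4.101}) is preserved since $f_t^{-1/k}(X,X/|X|)\ge c_0^{-1/k}$ on $|X|=r$. I would parametrize starshaped hypersurfaces as $M_\rho=\{\rho(x)x:x\in\mathbb S^n\}$ and rewrite (\ref{1.1}) as a fully nonlinear elliptic PDE for $\rho$ on $\mathbb S^n$. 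Let $I\subset[0,1]$ denote the set of $t$ for which this PDE admits a strictly convex $C^{3,\alpha}$ solution $\rho_t$ with $M_{\rho_t}\subset\bar B_r$. Then $0\in I$ via $M_0=\partial B_r$, and the goal is to show $I$ is both open and closed in $[0,1]$.

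Openness follows from the implicit function theorem: at a strictly convex $\rho_t$ the linearization of $\sigma_k(\kappa(\rho))-f_t$ is a uniformly elliptic second order linear operator on $\mathbb S^n$, and the starshaped parametrization breaks the translation kernel present in support-function formulations, so it is Fredholm of index zero and invertible on $C^{3,\alpha}(\mathbb S^n)$. For closedness one needs uniform-in-$t$ a priori estimates. The $C^0$ bound $\rho_t\le r$ is extracted from the barrier: at a maximum point $x_0$ of $\rho_t$ the outward normal is radial, and tangency of $M_{\rho_t}$ to $\partial B_{\rho_t(x_0)}$ from inside forces $\kappa_i(X_0)\ge 1/\rho_t(x_0)$, hence $\sigma_k(\kappa(X_0))\ge\binom{n}{k}/\rho_t(x_0)^k$; combined with (\ref{4.101}) and the strong maximum principle this rules out $\rho_t>r$ and in fact places $M_{\rho_t}$ strictly inside $B_r$ for $t>0$. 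A uniform positive lower bound on $\rho_t$ and a $C^1$ bound on $\rho_t$ follow from convexity, the inscribed-ball property, and the already-controlled diameter. The decisive $C^2$ upper curvature bound $\max_i\kappa_i\le C$ is supplied directly by Theorem \ref{theo 11}, applied uniformly along the path since $\|f_t\|_{C^2}$ and $\inf f_t$ are uniformly controlled. Two-sided curvature bounds then yield uniform ellipticity, and Evans--Krylov together with Schauder theory upgrade this to uniform $C^{3,\alpha}$ control, closing the continuity argument.

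The main obstacle, and the essential use of the hypothesis that $f^{-1/k}(\cdot,\nu)$ is convex in $X\in B_r$, is preserving strict convexity (a uniform positive lower bound on $\kappa_{\min}(M_t)$) along the path. Here I would invoke a constant rank / microscopic convexity principle in the spirit of Caffarelli--Guan--Ma: the convexity of $f^{-1/k}$ in $X$ is exactly the structural condition under which the second fundamental form of a convex solution to (\ref{1.1}) satisfies a differential inequality of maximum-principle type forcing its rank to be locally constant. Since $M_0=\partial B_r$ has full rank, connectedness and continuity of the family then force every $M_t$ to be strictly convex. With strict convexity in hand the a priori estimates close, $I$ is nonempty, open and closed in $[0,1]$, hence $I=[0,1]$, and the theorem follows by taking $t=1$.
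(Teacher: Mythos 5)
Your a priori estimates are in line with the paper's: the $C^0$/$C^1$ bounds from the barrier and convexity, the curvature bound from Theorem \ref{theo 11} (the paper packages the inscribed-ball argument and the $C^1$ bound as Lemma \ref{lemm 16}, using an Alexandrov--Fenchel inequality to bound the enclosed volume from below --- a step you assert but do not prove), and strict convexity from the constant rank theorem of \cite{GLM1}. The genuine gap is in your openness step. You claim the linearization of $\rho\mapsto\sigma_k(\kappa(\rho))-f_t$ at a strictly convex solution is invertible because ``the starshaped parametrization breaks the translation kernel.'' It does not. With your homotopy the right-hand side at $t=0$ is the constant $c_0=\sigma_k(1,\dots,1)/r^k$, and every translate of $\partial B_r$ that remains starshaped about the origin is also a solution; the solution set at $t=0$ is an $(n+1)$-parameter family, so the linearized operator at $\rho_0\equiv r$ has the first spherical harmonics in its kernel and the implicit function theorem fails at the very first step. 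More generally, with no monotonicity hypothesis on $f$ in the radial direction (nothing like condition (\ref{4.2}) is assumed in Theorem \ref{k-exist}), there is no reason for the zeroth-order coefficient of the linearization to have a sign, hence no invertibility along the path either.

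This is precisely why the paper does \emph{not} use the continuity method here but instead degree theory as in \cite{gg}: one only needs the a priori estimates (to confine all solutions to a fixed bounded open set of $C^{3,\alpha}$ with no solutions on its boundary) plus a computation of the degree at $t=0$, and invertibility of the linearization along the whole path is never required. The paper's homotopy is also chosen differently for this reason: $f^0=\bigl(C^k_n[\,|X|^{-k}+\varepsilon(|X|^{-k}-1)]\bigr)$ depends strictly monotonically on $|X|$, which destroys the translation degeneracy at $t=0$ and makes the degree there computable and nonzero. To repair your argument you would either have to switch to the degree-theoretic framework, or add a perturbation to $f_t$ that kills the kernel at $t=0$ \emph{and} prove invertibility of the linearization for all $t\in[0,1]$ --- and the latter is not available under the stated hypotheses.
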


To state a corresponding existence result for $2$-convex solutions of the prescribed scalar curvature equation (\ref{1.1}), we need further barrier conditions on the prescribed function $f$ as considered in \cite{BK, TW, CNS5}. We denote $\rho(X)=|X|$.

\medskip

We assume that

\noindent {\it Condition} (1). There are two positive constant $r_1<1<r_2$ such that
\begin{equation}\label{4.1}
\left\{
\begin{matrix}
f(X,\frac{X}{|X|}) &\geq&  \frac{\sigma_k(1,\cdots, 1)}{r^k_1},\ \ \text{ for } |X|=r_1,\\
f(X,\frac{X}{|X|}) &\leq&  \frac{\sigma_k(1,\cdots, 1)}{r_2^k}, \ \ \text{  for  } |X|=r_2 .
\end{matrix}\right.
\end{equation}

\noindent {\it Condition} (2). For any fixed unit vector $\nu$,
\begin{eqnarray}\label{4.2}
\frac{\p }{\p \rho}(\rho^kf(X,\nu))\leq 0,\ \   \text{ where } |X|=\rho.
\end{eqnarray}

\begin{theo}\label{2-exist} Suppose $k=2$ and suppose positive function $f\in C^2(\bar B_{r_2}\setminus B_{r_1}\times \mathbb S^n)$ satisfies conditions (\ref{4.1}) and (\ref{4.2}), then equation (\ref{1.1}) has a unique $C^{3,\alpha}$ starshaped solution $M$ in $\{r_1\le |X|\le r_2\}$.
\end{theo}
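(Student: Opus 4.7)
My plan is to combine the method of continuity with the a priori estimates, and to prove uniqueness by a comparison argument. Represent the hypersurface as a radial graph $M = \{\rho(x) x : x \in \mathbb S^n\}$ with $\rho \in C^{3,\alpha}(\mathbb S^n)$, so that (\ref{1.1}) becomes a second-order fully nonlinear elliptic PDE for $\rho$ whose ellipticity is enforced by $2$-convexity ($\kappa \in \Gamma_2$). Interpolate between a model and the target via
\[
f_t(X, \nu) = (1-t)\, \frac{\sigma_2(1, \ldots, 1)}{|X|^2} + t\, f(X, \nu), \qquad t \in [0,1],
\]
so that the unit sphere solves the equation at $t=0$ (a harmless modification of $f_0$ by a strictly decreasing factor of $|X|$ kills the residual scaling degeneracy of concentric spheres). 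Both barriers (\ref{4.1}) and (\ref{4.2}) are linear in $f$ and realized as equalities by $\sigma_2(1,\ldots,1)/|X|^2$, hence preserved along the entire path. Let $T \subset [0,1]$ be the set of $t$ for which $\sigma_2(\kappa) = f_t(X,\nu)$ admits a $2$-convex starshaped $C^{3,\alpha}$ solution in $\{r_1 \le |X| \le r_2\}$; it suffices to prove that $T$ is nonempty, open, and closed.

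For closedness I will establish uniform a priori $C^{3,\alpha}$ estimates in four steps. The $C^0$ bound $r_1 \le \rho \le r_2$ is obtained by comparison with concentric spheres at the extrema of $\rho$: at a maximum, internal tangency and the equation give $f(X_0,\hat X_0) \ge \sigma_2(1,\ldots,1)/\rho_{\max}^2$, which together with (\ref{4.1}) and (\ref{4.2}) extended radially forces $\rho_{\max} \le r_2$; the minimum case is symmetric. The $C^1$ bound is equivalent to a positive lower bound on the support function $u = \langle X, \nu \rangle = \rho^2/\sqrt{\rho^2 + |\nabla \rho|^2}$; I will apply the maximum principle to a test function of the form $P = -\log u + \psi(\rho)$ for a suitable increasing $\psi$, the decisive favorable term arising from the radial derivative through (\ref{4.2}) (this is the classical gradient estimate of Bakelman--Kantor, Treibergs, and Caffarelli--Nirenberg--Spruck). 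The $C^2$ bound is exactly Theorem \ref{theo2 11} applied to our $2$-convex starshaped solution. Finally, since $\kappa \in \Gamma_2$ with $\kappa_i \le C$, the structure of the G\r{a}rding cone provides $\kappa_i \ge -C'$, so the linearization is uniformly elliptic; Evans--Krylov yields $C^{2,\alpha}$ and Schauder bootstrap yields $C^{3,\alpha}$.

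For openness at $t_0 \in T$, the Fr\'echet derivative $L$ of the equation at $\rho_{t_0}$ is a second-order uniformly elliptic operator on $\mathbb S^n$ whose zeroth-order coefficient carries the sign of $\partial_\rho(\rho^k f_{t_0}) \le 0$ by (\ref{4.2}); hence $L$ is invertible on $C^{2,\alpha}(\mathbb S^n)$ and the implicit function theorem produces solutions for $t$ near $t_0$. For uniqueness, given two starshaped $2$-convex solutions $\rho_1, \rho_2$, set $\tau = \max_{\mathbb S^n}(\rho_2/\rho_1) \ge 1$; at a maximizing point the rescaled surface $(1/\tau) M_2$ is internally tangent to $M_1$ with matching outward normal. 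Comparing second fundamental forms gives $\kappa(M_1)(X_0^1) \ge \tau\, \kappa(M_2)(\tau X_0^1)$, hence $\rho_1^2 f(X_0^1, \nu) \ge (\tau\rho_1)^2 f(\tau X_0^1, \nu)$; together with (\ref{4.2}) along the radial segment and a strong maximum principle for the linearized equation satisfied by the difference, this forces $\tau = 1$, and the symmetric argument gives $\rho_1 \equiv \rho_2$.

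The substantive difficulty of the whole theorem is concentrated in the $C^2$ step, which is Theorem \ref{theo2 11} and is the main contribution of this paper; the remaining ingredients follow the standard playbook for starshaped curvature equations. The residual delicate point is the openness at $t=0$, where the scaling invariance of the model $\sigma_2(1,\ldots,1)/|X|^2$ would give a nontrivial kernel for $L$; this is precisely why one prefactors $f_0$ by a strictly decreasing $\phi(|X|)$, a modification which preserves both (\ref{4.1}) and (\ref{4.2}) throughout the deformation and makes the base linearization invertible.
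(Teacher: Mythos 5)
Your overall strategy is the same as the paper's: the paper proves this statement as Theorem~\ref{them 14} by the continuity method, with the $C^0$ bound from sphere comparison at the maximum/minimum of $\rho$, the $C^1$ bound (lower bound on the support function) from Lemma~\ref{lemma 12} using condition (\ref{4.2}), the $C^2$ bound from Theorem~\ref{theo2 11}, Evans--Krylov for $C^{2,\alpha}$, openness following \cite{CNS5}, and uniqueness deferred to \cite{CNS5}. Your deformation $f_t$ is a slight variant of the paper's $f^t = tf + (1-t)C_n^2\bigl[\rho^{-k} + \varepsilon(\rho^{-k}-1)\bigr]$; the paper's $\varepsilon$-correction plays precisely the role of your ``strictly decreasing prefactor'' (killing the scaling kernel at $t=0$) and moreover makes (\ref{4.1})--(\ref{4.2}) \emph{strict} for $t<1$, which is what the paper uses to get a clean contradiction in the $C^0$ step and nondegeneracy in the openness step; with a merely nonstrict sign your linearization $L$ is not obviously invertible.

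The uniqueness argument as written has the curvature comparison backwards. With $\tau = \max_{\mathbb S^n}(\rho_2/\rho_1)\ge 1$, the rescaled surface $(1/\tau)M_2$ lies \emph{inside} $M_1$ and touches it at the maximizing point with matching outward normal; at such a point of internal tangency the \emph{inner} surface has the larger second fundamental form, so
\[
\tau\,\kappa(M_2)(\tau X_0^1)\;\ge\;\kappa(M_1)(X_0^1),
\]
which is the reverse of your $\kappa(M_1)(X_0^1)\ge\tau\kappa(M_2)(\tau X_0^1)$. Pushing the corrected inequality through the equation and the degree-$2$ homogeneity of $\sigma_2$ gives $\rho_1^2 f(X_0^1,\nu)\le(\tau\rho_1)^2 f(\tau X_0^1,\nu)$, and it is \emph{this} inequality that opposes condition (\ref{4.2}) (which says $\rho^2 f$ is non-increasing along the radial segment); the two together yield equality, and one then continues with the strong maximum principle. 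As you have written it, both your curvature inequality and (\ref{4.2}) point in the same direction, so no contradiction or equality is produced and the argument does not close; the sign must be corrected. The paper itself does not spell this step out, citing \cite{CNS5}.
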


\bigskip

The organization of the paper is as follow. As an illustration, we give a short proof of $C^2$ estimate for $\sigma_2$-Hessian equation on $\mathbb R^2$ in Section 2. Theorem \ref{theo2 11} and Theorem \ref{theo 11} are proved in Section 3 and Section 4 respectively. Section 5 is devoted to various existence theorems. Construction of examples of convex hypersurfaces stated in Theorem \ref{theo 11-counter} appears in Section 6.

\section{The Hessian equation for $k=2$}
\par

We first consider $\sigma_2$-Hessian equations in a domain $\Omega \subset \mathbb R^{n+1}$:
\begin{equation}\label{2.1}
\left\{\begin{matrix}\sigma_2[D^2u]&=&f(x,u,Du), \\
 u|_{\partial \Omega}&=&\phi.\end{matrix}\right.
\end{equation}

We believe $C^2$ estimates for equation (\ref{2.1}) is known. Since we are not able to find any reference in the literature, a proof is produced here to serve as an illustration.

We need following lemma which is a slightly improvement of Lemma 1 in \cite{GLL}.

\begin{lemm}\label{lemma 1}
Assume that $k>l$, $W=(w_{ij})$ is a Codazzi tensor which is in $\Gamma_k$. Denote $\al=\dfrac{1}{k-l}$.  Then, for $h=1,\cdots, n$, we have the following inequality,
\begin{eqnarray}\label{1.4}
&&-\dfrac{\sigma_k^{pp,qq}}{\sigma_k}(W)w_{pph}w_{qqh}+\dfrac{\sigma_l^{pp,qq}}{\sigma_l}(W)
w_{pph}w_{qqh}\\
&\geq& \abc{\dfrac{(\sigma_k(W))_h}{\sigma_k(W)}-\dfrac{(\sigma_l(W))_h}{\sigma_l(W)}}
\abc{(\al-1)\dfrac{(\sigma_k(W))_h}{\sigma_k(W)}-(\al+1)\dfrac{(\sigma_l(W))_h}{\sigma_l(W)}}.\nonumber
\end{eqnarray}
Furthermore, for any $\delta>0$,
\begin{eqnarray}\label{1.5}
&&-\sigma_k^{pp,qq}(W)w_{pph}w_{qqh} +(1-\al+\dfrac{\al}{\delta})\dfrac{(\sigma_k(W))_h^2}{\sigma_k(W)}\\
&\geq &\sigma_k(W)(\al+1-\delta\al)
\abz{\dfrac{(\sigma_l(W))_h}{\sigma_l(W)}}^2
-\dfrac{\sigma_k}{\sigma_l}(W)\sigma_l^{pp,qq}(W)w_{pph}w_{qqh}.\nonumber
\end{eqnarray}
\end{lemm}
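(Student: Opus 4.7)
The strategy is to derive both inequalities from the classical fact that $W \mapsto (\sigma_k/\sigma_l)^{1/(k-l)}(W)$ is concave on the Garding cone $\Gamma_k$ (Marcus, Lopes, and others). Fix the index $h$, set $G = (\sigma_k/\sigma_l)^{\alpha}$ with $\alpha = 1/(k-l)$, and abbreviate $A = (\sigma_k(W))_h/\sigma_k(W)$, $B = (\sigma_l(W))_h/\sigma_l(W)$, so that the right-hand side of (\ref{1.4}) is precisely $(A-B)((\alpha-1)A - (\alpha+1)B)$.

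For (\ref{1.4}), I would exploit the logarithmic identity $G^{pp,qq} = G\bigl[(\log G)^{pp}(\log G)^{qq} + (\log G)^{pp,qq}\bigr]$ together with $(\log G)^{pp} w_{pph} = \alpha(A-B)$. Contracting the concavity inequality $G^{pp,qq}w_{pph}w_{qqh} \le 0$ against the diagonal direction $(w_{pph})_p$, dividing by $\alpha G > 0$, and simplifying using $(\log \sigma_k)^{pp,qq} w_{pph}w_{qqh} = \sigma_k^{pp,qq} w_{pph} w_{qqh}/\sigma_k - A^2$ (and the same for $\sigma_l$), one arrives at the intermediate inequality
\begin{equation*}
\alpha(A-B)^2 - A^2 + B^2 \le -\frac{\sigma_k^{pp,qq}}{\sigma_k} w_{pph}w_{qqh} + \frac{\sigma_l^{pp,qq}}{\sigma_l}w_{pph}w_{qqh}.
\end{equation*}
The left-hand side expands to $(\alpha-1)A^2 - 2\alpha AB + (\alpha+1)B^2$, which factors as $(A-B)((\alpha-1)A - (\alpha+1)B)$, yielding (\ref{1.4}).

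For (\ref{1.5}), I would start from (\ref{1.4}) and eliminate the cross term $-2\alpha AB$ via the weighted Young inequality $-2AB \ge -\delta^{-1}A^2 - \delta B^2$ (equivalently $(A/\sqrt\delta - \sqrt\delta B)^2 \ge 0$). This converts the right-hand side of (\ref{1.4}) into $(\alpha - 1 - \alpha/\delta)A^2 + (\alpha+1-\delta\alpha)B^2$. Multiplying through by $\sigma_k > 0$, using $\sigma_k A^2 = (\sigma_k(W))_h^2/\sigma_k$, and moving the $A^2$-term to the left-hand side produces (\ref{1.5}) in exactly the stated form.

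There is no serious obstacle: once the concavity of $(\sigma_k/\sigma_l)^{1/(k-l)}$ on $\Gamma_k$ is invoked, both inequalities reduce to bookkeeping plus a single application of Young's inequality. The Codazzi hypothesis on $W$ plays no direct role in this pointwise identity — contracting $G^{pp,qq}$ against a \emph{diagonal} direction $(w_{pph})_p$ only requires concavity of $G$ as a symmetric function of the eigenvalues, so no off-diagonal $G^{pq,rs}$ components enter — but it is assumed because the lemma is intended for $W$ equal to the second fundamental form, whose covariant derivatives automatically satisfy the Codazzi symmetry.
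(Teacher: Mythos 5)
Your argument is correct and is essentially the paper's own proof: both rest on the concavity of $(\sigma_k/\sigma_l)^{1/(k-l)}$ on $\Gamma_k$, the identity relating $F^{pp,qq}$ to the logarithmic second derivatives of $\sigma_k$ and $\sigma_l$, contraction against the diagonal direction $(w_{pph})_p$, and then Young's (Schwarz's) inequality to pass from (\ref{1.4}) to (\ref{1.5}). Your side remark that the Codazzi hypothesis is not used in this pointwise algebraic step is also accurate.
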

\begin{proof} Define a function $$\ln F=\ln (\dfrac{\sigma_k}{\sigma_l})^{1/(k-l)}=\dfrac{1}{k-l}\ln\sigma_k-\dfrac{1}{k-l}\ln\sigma_l.$$ Differentiate it twice,
$$
\dfrac{F^{pp}}{F}=\dfrac{1}{k-l}\dfrac{\sigma_k^{pp}}{\sigma_k}-\dfrac{1}{k-l}\dfrac{\sigma_l^{pp}}{\sigma_l},
$$
$$
\dfrac{F^{pp,qq}}{F}-\dfrac{F^{pp}F^{qq}}{F^2}=\dfrac{1}{k-l}\dfrac{\sigma_k^{pp,qq}}{\sigma_k}
-\dfrac{1}{k-l}\dfrac{\sigma_k^{pp}\sigma_k^{qq}}{\sigma_k^2}
-\dfrac{1}{k-l}\dfrac{\sigma_l^{pp,qq}}{\sigma_l}+\dfrac{1}{k-l}\dfrac{\sigma_l^{pp}\sigma_l^{qq}}{\sigma_l^2}.
$$
By the concavity of $F$ and using previous two equalities,
$$
-\dfrac{1}{k-l}\abc{\dfrac{\sigma_k^{pp}}{\sigma_k}-\dfrac{\sigma_l^{pp}}{\sigma_l}}\abc{\dfrac{\sigma_k^{qq}}{\sigma_k}
-\dfrac{\sigma_l^{qq}}{\sigma_l}}
\geq \dfrac{\sigma_k^{pp,qq}}{\sigma_k}-\dfrac{\sigma_k^{pp}\sigma_k^{qq}}{\sigma_k^2}
-\dfrac{\sigma_l^{pp,qq}}{\sigma_l}+\dfrac{\sigma_l^{pp}\sigma_l^{qq}}{\sigma_l^2}.
$$
Here the meaning of  $"\geq"$ is for comparison of positive definite matrices. Hence, for each $h$ with $(w_{11h},\cdots, w_{nnh})$, we obtain
(\ref{1.4}). (\ref{1.5}) follows from (\ref{1.4}) and the Schwarz inequality.
\end{proof}

\bigskip

Consider
$$
\phi=\dmax_{|\xi|=1,x\in\Omega}\exp\{\dfrac{\varepsilon}{2}|Du|^2+\dfrac{a}{2}|x|^2\}u_{\xi\xi},
$$
where $\varepsilon$ to be determined later.
Suppose that the maximum of $\phi$ is achieved  at some point $x_0$ in $\Omega$ along
some direction $\xi$. We may assume that $\xi=(1,0,\cdots,0)$.
Rotating the coordinates if necessary, we may assume the matrix $(u_{ij})$ is diagonal, and  $u_{11}\geq u_{22}\cdots\geq u_{nn}$ at the point.
\par
Differentiate the function twice  at  $x_0$,
\begin{equation}\label{2.3}
\dfrac{u_{11i}}{u_{11}}+\varepsilon u_iu_{ii}+ax_i=0,
\end{equation}
and
\begin{equation}\label{2.4}
\dfrac{u_{11ii}}{u_{11}}-\dfrac{u_{11i}^2}{u_{11}^2}+\dsum_k\varepsilon
u_ku_{kii}+\varepsilon u_{ii}^2+a\leq 0.
\end{equation}
Contract with the matrix
$\sigma_2^{ii}u_{11}$,
\begin{equation}\label{2.5}
\sigma_2^{ii}u_{11ii}-\sigma_2^{ii}\dfrac{u_{11i}^2}{u_{11}}+u_{11}\dsum_k\varepsilon
u_k\sigma_2^{ii}u_{kii}+u_{11}\varepsilon \sigma_2^{ii}u_{ii}^2+a\sum_i\sigma_2^{ii}u_{11}\leq
0.
\end{equation}
\par
At $x_0$, differentiate equation (\ref{1.1}) twice,
\begin{eqnarray}\label{2.6}
\sigma_2^{ii}u_{iij}&=&f_j+f_uu_j+f_{p_j}u_{jj},
\end{eqnarray}
and
\begin{eqnarray}\label{2.7}
&&\sigma_2^{ii}u_{iijj}+\sigma_2^{pq,rs}u_{pqj}u_{rsj}\\
&=&f_{jj}+2f_{ju}u_j+2f_{jp_j}u_{jj}+f_{uu}u_j^2+2f_{up_j}u_ju_{jj}+f_uu_{jj}+f_{p_jp_j}u_{jj}^2+\sum_kf_{p_k}u_{kjj}\nonumber.
\end{eqnarray}
Choose $j=1$ in the above equation, and insert (\ref{2.7}) into (\ref{2.5}),
\begin{align*}
0\geq &-C-Cu_{11} +f_{p_1p_1}u_{11}^2+\dsum_kf_{p_k}u_{k11} -\sigma_2^{pq,rs}u_{pq1}u_{rs1}\\
&-\sigma_2^{ii}\frac{u_{11i}^2}{u_{11}}+u_{11}\dsum_k\varepsilon
u_k\sigma_2^{ii}u_{kii}+u_{11}\varepsilon \sigma_2^{ii}u_{ii}^2+a\sum_i\sigma_2^{ii}u_{11}.
\end{align*}

Use (\ref{2.3}) and (\ref{2.6}),
$$
\dsum_kf_{p_k}u_{k11}+u_{11}\dsum_k\varepsilon
u_k\sigma_2^{ii}u_{kii}=u_{11}\dsum_k(\varepsilon u_k f_k+\varepsilon
f_uu_k^2-ax_kf_{p_k}).
$$
Then
\begin{eqnarray}\label{2.8}
0&\geq& -C-Cu_{11} +f_{p_1p_1}u_{11}^2
-\dsum_{p\neq r}u_{pp1}u_{rr1}
+\dsum_{p\neq
q}u_{pq1}^2-\sigma_2^{ii}\dfrac{u_{11i}^2}{u_{11}}\\
&&+u_{11}\varepsilon
\sigma_2^{ii}u_{ii}^2+(n-1)au_{11}\dsum_ku_{kk}.\nonumber
\end{eqnarray}
Choose $k=2,l=1$ and $h=1$ in Lemma \ref{lemma 1}, we have,
$$
-\dsum_{p\neq r}u_{pp1}u_{rr1}+(1-\alpha+\frac{\alpha}{\delta})\frac{(\sigma_2)_1^2}{\sigma_2}
\ \ \geq \ \ (\alpha +1-\delta\alpha )\sigma_2[\frac{(\sigma_1)_1}{\sigma_1}]^2\ \ \geq \ \ 0.
$$
Inequality (\ref{2.8}) becomes,
\begin{eqnarray}\label{2.9}
0&\geq& -C-Cu_{11} +f_{p_1p_1}u_{11}^2 +(n-1)au_{11}^2-C(\sigma_2)_1^2\\
&&+u_{11}\varepsilon \sigma_2^{ii}u_{ii}^2+2\dsum_{k\neq
1}u_{11k}^2-\sigma_2^{ii}\frac{u_{11i}^2}{u_{11}}\nonumber\\
&\geq&((n-1)a-C_0)u_{11}^2+u_{11}\varepsilon \sigma_2^{ii}u_{ii}^2+2\dsum_{k\neq
1}u_{11k}^2-\sigma_2^{ii}\frac{u_{11i}^2}{u_{11}}\nonumber,
\end{eqnarray}
where we have used (\ref{2.3}) and the Schwarz inequality. We claim, if $a$ is chosen sufficient large such that
$$
(n-1)a-C_0 \geq 1,
$$
then
\begin{equation}\label{2.0}
u_{11}\varepsilon \sigma_2^{ii}u_{ii}^2+2\dsum_{k\neq
1}u_{11k}^2-\sigma_2^{ii}\frac{u_{11i}^2}{u_{11}} \geq 0.
\end{equation}
(\ref{2.9}) will yield an upper bound of $u_{11}$.
\par
We prove the claim (\ref{2.0}). We may
assume that $u_{11}$ is sufficient large. By (\ref{2.3}) and the Schwarz inequality,
\begin{eqnarray}\label{2.10}
\sigma_2^{11}u_{11}\varepsilon u_{11}^2-\sigma_2^{11}\dfrac{u_{111}^2}{u_{11}}
&\geq&\sigma_2^{11}u_{11}(\varepsilon
u_{11}^2-2\varepsilon^2u_1^2u_{11}^2-2a^2x_1^2).
\end{eqnarray}
If we require
\begin{eqnarray}\label{2.11}
\varepsilon \geq 3\varepsilon^2\max_{\Omega}|\nabla u|^2,
\end{eqnarray}
and if $u_{11}$ sufficient large, (\ref{2.10}) is nonnegative.  As in \cite{CW}, we divide it into two different cases.  Denote $\lambda_i=u_{ii}$.

\par
(A)\ \  $\dsum_{i=2}^{n-1}\lambda_i\leq \lambda_1$. In this case, for $i\neq 1$, since $\lambda_1\geq \lambda_2\geq\cdots\geq \lambda_n$,
$$2u_{11}\geq \sigma_2^{ii}.$$ Hence,
$$2\dsum_{k\neq 1}u_{11k}^2-\sum_{i\neq 1}\sigma_2^{ii}\dfrac{u_{11i}^2}{u_{11}}\geq 0.$$
Combine with (\ref{2.10}), we obtain (\ref{2.0}).
\par
(B) \ \  $\dsum_{i=2}^{n-1}\lambda_i\geq \lambda_1$, then
$\dfrac{\lambda_1}{n-2}\leq\lambda_2\leq\lambda_1$. We further divide
this case into two subcases.
\par
(B1)\ \  Suppose  $\sigma_2^{22}\geq 1$. Using (\ref{2.11}), (\ref{2.3}) and the Schwarz inequality,
\begin{align*}
&u_{11}\varepsilon \sum_{i\neq 1}\sigma_2^{ii}u_{ii}^2-\sum_{i\neq 1}\sigma_2^{ii}\frac{u_{11i}^2}{u_{11}}\\
= &\sigma_2^{22}u_{11}\varepsilon u_{22}^2-\sigma_2^{22}\dfrac{u_{112}^2}{u_{11}}+\dsum_{i>2}(\sigma_2^{ii}u_{11}\varepsilon u_{ii}^2-\sigma_2^{ii}\dfrac{u_{11i}^2}{u_{11}})\\
\geq&\sigma_2^{22}u_{11}(\varepsilon u_{22}^2-2\varepsilon^2 u_2^2u_{22}^2-2a^2x_2^2)+\dsum_{i>2}\sigma_2^{ii}u_{11}(\varepsilon u_{ii}^2-2\varepsilon^2 u_i^2u_{ii}^2 -2a^2x_i^2)\\
\geq& \dfrac{1}{3}\sigma_2^{22}u_{11}(\varepsilon u_{22}^2-C)-Cu_{11}\sum_{i>2}\sigma_2^{ii}\\
\geq& \dfrac{\varepsilon}{3}\lambda_1 \lambda_2^2-C\lambda_1^2\\
\geq& \dfrac{\varepsilon}{3(n-2)^2}\lambda^3_1-C\lambda_1^2,
\end{align*}
it is nonnegative if $\lambda_1$ is sufficient large. In view of (\ref{2.10}), in this subcase, (\ref{2.0}) holds.
\par
(B2)\ \  Suppose $\sigma_2^{22}< 1$. Again, we may assume that $\lambda_1$ is sufficient large, we have $\lambda_n<0$.  By the assumption,
$1\geq \lambda_1+(n-2)\lambda_n$. It implies,
$$-\lambda_n\geq\frac{\lambda_1-1}{n-2}.$$  Since $\sigma_2^{nn}+\lambda_n= \lambda_1+\sigma_2^{11}$, we have $\sigma^{nn}_2\geq \lambda_1$. We get
\begin{align*}
&u_{11}\varepsilon \sum_{i\neq 1}\sigma_2^{ii}u_{ii}^2-\sum_{i\neq 1}\sigma_2^{ii}\frac{u_{11i}^2}{u_{11}}\\
= &\sigma_2^{nn}u_{11}\varepsilon u_{nn}^2-\sigma_2^{nn}\dfrac{u_{11n}^2}{u_{nn}}+\sum_{1< i<n}(\sigma_2^{ii}u_{11}\varepsilon u_{ii}^2-\sigma_2^{ii}\dfrac{u_{11i}^2}{u_{11}})\\
\geq& \dfrac{1}{3}\sigma_2^{nn}u_{11}(\varepsilon u_{nn}^2-C)-Cu_{11}\sum_{1<i<n}\sigma_2^{ii}\\
\geq& \dfrac{\varepsilon}{3(n-2)^2}\lambda_1^2(\lambda_1-1)^2-C\lambda_1^2.
\end{align*}
Here, the first inequality comes from (\ref{2.3}) and the Schwarz inequality. The process is similar to the first and second inequalities in subcase (B1). The above quantity is nonnegative, if $\lambda_1$ is sufficient large. (\ref{2.0}) follows from (\ref{2.10}).

\medskip

With the $C^2$ interior estimate, one may obtain a global $C^2$ estimate if the corresponding boundary estimate is in hand. This type of $C^2$ boundary estimates have been proved by Bo Guan in \cite{B} under the assumption that Dirichlet problem (\ref{2.1}) has a subsolution. Namely, there is a function $\underline{u}$, satisfying
 \begin{equation}\label{2.1.1}
\left\{\begin{matrix}\sigma_2[D^2\underline{u}]&\geq &f(x,\underline{u},D\underline{u}), \\
\underline{u}|_{\partial \Omega}&=&\phi.\end{matrix}\right.
\end{equation}

\begin{theo}
Suppose $\Omega\subset \mathbb R^n$ is a bounded domain with smooth boundary. Suppose $f(p, u, x)\in C^2(\mathbb R^n\times \mathbb R\times \bar\Omega)$ is a positive function with $f_u\ge 0$. Suppose there is a subsolution $\underline{u}\in C^3(\bar \Omega)$ satisfying (\ref{2.1.1}), then the Dirichlet problem (\ref{2.1}) has a unique $C^{3,\alpha}, \forall 0<\alpha<1$ solution $u$. \end{theo}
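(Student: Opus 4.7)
The proof follows the standard method of continuity. I would interpolate between $\underline u$ and the target equation via the family
\[F^t(u):=\sigma_2(D^2 u)-\bigl[(1-t)\sigma_2(D^2\underline u)+tf(x,u,Du)\bigr]=0,\qquad u|_{\partial\Omega}=\phi,\]
for $t\in[0,1]$. At $t=0$ the subsolution $\underline u$ itself is an admissible solution, and one checks that $\underline u$ remains a subsolution at every $t\in[0,1]$, so Bo Guan's boundary $C^2$ estimate from \cite{B} applies uniformly along the path. The aim is to show that the set of $t$ for which the $t$-problem admits an admissible $C^{3,\alpha}$ solution is nonempty, open, and closed in $[0,1]$.

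Closedness reduces to a uniform $C^{2,\alpha}$ estimate. A one-sided bound $u\geq\underline u$ comes from the comparison principle, valid because $f_u\geq 0$ gives a nonpositive zero-order coefficient in the linearization; an upper $C^0$ bound and the boundary gradient estimate follow from standard linear barriers matching $\phi$ on $\partial\Omega$; the interior gradient estimate is a classical maximum-principle argument for $\sigma_2$. The boundary $C^2$ bound is the content of \cite{B}, and the interior $C^2$ bound is precisely the computation carried out earlier in this section, culminating in the bound on $u_{11}$ at the maximum point of the test function $\phi$. Once $\|u\|_{C^2(\bar\Omega)}\leq C$ is in hand, Evans--Krylov applied to the concave operator $\sigma_2^{1/2}$ on $\Gamma_2$ yields a uniform $C^{2,\alpha}$ estimate, and linear Schauder theory bootstraps this to $C^{3,\alpha}$.

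Openness follows from the implicit function theorem: at an admissible $C^{3,\alpha}$ solution, the linearization $Lv=\sigma_2^{ij}(D^2u)v_{ij}-tf_{p_k}v_k-tf_u v$ is uniformly elliptic and has nonpositive zero-order coefficient $-tf_u$, hence is an isomorphism from $\{v\in C^{2,\alpha}(\bar\Omega):v|_{\partial\Omega}=0\}$ onto $C^{0,\alpha}(\bar\Omega)$ by the Fredholm alternative. Uniqueness follows from the same comparison principle applied to the difference of two solutions, which satisfies a linear elliptic equation with nonpositive zero-order term and vanishing boundary data. The main obstacle will be preserving strict admissibility along the homotopy, i.e.\ preventing the eigenvalues of $D^2 u^t$ from reaching $\partial\Gamma_2$; this is controlled by the positivity of $f$ together with the uniform $C^2$ estimates, which keep $\sigma_2(D^2u^t)$ bounded away from zero and $D^2 u^t$ in a fixed compact subset of $\Gamma_2$ along the path.
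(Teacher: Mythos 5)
Your proposal is correct and is essentially the argument the paper intends: the paper offers no written proof of this theorem, only the remark that one combines the interior $C^2$ estimate just derived in Section 2 with Bo Guan's boundary $C^2$ estimate under the subsolution hypothesis, and your continuity method, the role of $f_u\ge 0$ in the comparison principle and in the invertibility of the linearization, and the Evans--Krylov/Schauder bootstrap via the concave operator $\sigma_2^{1/2}$ are exactly the standard ingredients. The only step you pass over as quickly as the paper does is the interior gradient estimate when $f$ genuinely depends on $Du$ (note also that the Section 2 computation presupposes a bound on $\max_\Omega|\nabla u|$ through the choice of $\varepsilon$), but this is at the same level of detail as the source.
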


\medskip

To conclude this section, we list one lemma which is well known (e.g., Theorem 5.5 in \cite{Ball}, it was also originally stated in a preliminary version of \cite{CNS3} and was lately removed from the published version).

\begin{lemm} Denote $Sym(n)$ the set of all $n\times n$ symmetric matrices. Let $F$ be a $C^2$ symmetric function defined in some open subset $\Psi \subset Sym(n)$. At any diagonal matrix $A\in \Psi$ with distinct
eigenvalues, let $\ddot{F}(B,B)$ be the second derivative of $C^2$ symmetric function $F$
in direction $B \in Sym(n)$, then
\begin{eqnarray}
\label{1.0} \ddot{F}(B,B) =  \sum_{j,k=1}^n {\ddot{f}}^{jk}
B_{jj}B_{kk} + 2 \sum_{j < k} \frac{\dot{f}^j -
\dot{f}^k}{{\lambda}_j - {\lambda}_k} B_{jk}^2.
\end{eqnarray}
\end{lemm}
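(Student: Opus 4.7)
The plan is to reduce the computation to classical first- and second-order perturbation theory for simple eigenvalues of a symmetric matrix. Consider the affine path $A(t) = A + tB$. Since the eigenvalues of $A$ are distinct, they remain simple for small $t$, so by the implicit function theorem applied to the characteristic polynomial (or Rellich's theorem) there exist real-analytic curves of eigenvalues $\lambda_j(t)$ with $\lambda_j(0)=\lambda_j$, together with an associated real-analytic orthonormal frame $e_j(t)$ whose value at $t=0$ is the $j$-th standard basis vector. Then $F(A(t)) = f(\lambda_1(t),\dots,\lambda_n(t))$ and $\ddot F(B,B) = \frac{d^2}{dt^2}\big|_{t=0} F(A(t))$, so the task reduces to computing $\dot\lambda_j(0)$ and $\ddot\lambda_j(0)$ and assembling them via the chain rule.

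Differentiating $A(t) e_j(t) = \lambda_j(t) e_j(t)$ and the normalization $\langle e_j, e_j \rangle = 1$, and testing against $e_k$ for $k \neq j$, I would derive the standard first-order formulas
\[\dot\lambda_j(0) = B_{jj}, \qquad \dot e_j(0) = \sum_{k\neq j} \frac{B_{kj}}{\lambda_j - \lambda_k}\, e_k.\]
Differentiating $\lambda_j(t) = \langle A(t) e_j(t), e_j(t) \rangle$ a second time, using $\ddot A(0) = 0$, the symmetry of $B$, and the fact that $\langle A e_j, \dot e_j \rangle = \lambda_j \langle e_j, \dot e_j \rangle = 0$, the surviving terms give
\[\ddot\lambda_j(0) = 2\langle B e_j, \dot e_j(0)\rangle = 2\sum_{k \neq j}\frac{B_{jk}^2}{\lambda_j - \lambda_k}.\]

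Applying the chain rule to $F(A(t)) = f(\lambda_1(t),\dots,\lambda_n(t))$ at $t=0$ then yields
\[\ddot F(B,B) = \sum_{j,k} \ddot f^{jk} B_{jj} B_{kk} + \sum_j \dot f^j\, \ddot\lambda_j(0),\]
which produces the first term of (\ref{1.0}) directly. Substituting the formula above, the second sum becomes $2\sum_{j \neq k} \frac{\dot f^j}{\lambda_j - \lambda_k} B_{jk}^2$; grouping the ordered pairs $(j,k)$ and $(k,j)$ and using $B_{jk} = B_{kj}$ combines them into $\frac{\dot f^j - \dot f^k}{\lambda_j - \lambda_k} B_{jk}^2$ for each unordered pair $j<k$, yielding the second term of (\ref{1.0}) with the correct leading factor of $2$. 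The only non-routine ingredient is the smooth dependence of $\lambda_j(t)$ and $e_j(t)$ on $t$, which is classical in the simple-spectrum case; everything else is careful bookkeeping.
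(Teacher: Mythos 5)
Your perturbation argument is correct: the formulas $\dot\lambda_j(0)=B_{jj}$, $\dot e_j(0)=\sum_{k\neq j}\frac{B_{kj}}{\lambda_j-\lambda_k}e_k$, and $\ddot\lambda_j(0)=2\sum_{k\neq j}\frac{B_{jk}^2}{\lambda_j-\lambda_k}$ are all right, and the chain-rule assembly together with the symmetrization over the pairs $(j,k)$ and $(k,j)$ yields exactly (\ref{1.0}). The paper itself gives no proof of this lemma --- it is cited as well known (Theorem 5.5 of Ball's paper, and a preliminary version of Caffarelli--Nirenberg--Spruck III) --- and your derivation is essentially the standard one found in those references, so there is nothing to reconcile.
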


\section{the scalar curvature equation}

We consider the global curvature estimates for solution to curvature equation (\ref{1.1}) with $k=2$, i.e. the prescribing scalar curvature equation in $\mathbb R^{n+1}$. In \cite{Gerh}, a global curvature estimate was obtained for prescribing scalar curvature equation in Lorentzian manifolds, where some special properties of the spacelike hypersurfaces were used. It seems for equation (\ref{1.1}) in $\mathbb R^{n1+}$, the situation is different. A new feature here is to consider a nonlinear test function $\log \sum_l e^{\kappa_l}$. We explore certain convexity property of this function, which will be used in a crucial way in our proof.

Set $u(X)=<X, \nu(X)>$. By the assumption that $M$ is starshaped with a $C^1$ bound, $u$ is bounded from below and above by two positive constants.
At every point in the hypersurface $M$, choose a local coordinate frame $\{ \p/(\p x_1),\cdots,\p/(\p x_{n+1})\}$ in $\mathbb{R}^n$ such that the first $n$ vectors are the local coordinates of the hypersurface and the last one is the unit outer normal vector.  Denote $\nu$ to be the outer normal vector. We let  $h_{ij}$ and $u$ be the second fundamental form and the support function of the hypersurface $M$ respectively.  The following geometric formulas are well known (e.g., \cite{GLL}).

\begin{equation}
h_{ij}=\langle\partial_iX,\partial_j\nu\rangle,
\end{equation}
and
\begin{equation}
\begin{array}{rll}
X_{ij}=& -h_{ij}\nu\quad {\rm (Gauss\ formula)}\\
(\nu)_i=&h_{ij}\partial_j\quad {\rm (Weigarten\ equation)}\\
h_{ijk}=& h_{ikj}\quad {\rm (Codazzi\ formula)}\\
R_{ijkl}=&h_{ik}h_{jl}-h_{il}h_{jk}\quad {\rm (Gauss\ equation)},\\
\end{array}
\end{equation}
where $R_{ijkl}$ is the $(4,0)$-Riemannian curvature tensor. We also have
\begin{equation}
\begin{array}{rll}
h_{ijkl}=& h_{ijlk}+h_{mj}R_{imlk}+h_{im}R_{jmlk}\\
=& h_{klij}+(h_{mj}h_{il}-h_{ml}h_{ij})h_{mk}+(h_{mj}h_{kl}-h_{ml}h_{kj})h_{mi}.\\
\end{array}
\end{equation}

\medskip

We need a more explicit version of Lemma \ref{lemma 1}.
\begin{lemm}\label{exch}
Suppose $W=(w_{ij})$ is a Codazzi tensor which is in $\Gamma_2$.  For $h=1,\cdots, n$, there exist sufficient large constants $K,\alpha$ and sufficient small constant $\delta$,  such that the following inequality holds,
 \begin{eqnarray}\label{e3.4}
 K(\sigma_2)_h^2-\sum_{p\neq r}w_{pph}w_{rrh}-\delta w_{hh}\sigma_2^{hh}\frac{w_{hhh}^2}{\sigma_1^2}+\alpha \sum_{i\neq h}w_{iih}^2\geq 0.
 \end{eqnarray}
 \end{lemm}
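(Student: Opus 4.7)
The plan is to derive (\ref{e3.4}) as a consequence of Lemma \ref{lemma 1} applied with $k=2$, $l=1$, and $\alpha_{\mathrm{Lem}}=\tfrac{1}{k-l}=1$, combined with a pointwise bound on the coefficient $w_{hh}\sigma_2^{hh}/\sigma_1^2$. The essential simplification for $l=1$ is that $\sigma_1$ is linear, so the term $\sigma_1^{pp,qq}w_{pph}w_{qqh}$ on the right side of (\ref{1.5}) vanishes. The key pointwise bound comes from $\sigma_2^{hh}=\sigma_1-w_{hh}$, which gives
\[
\frac{w_{hh}\sigma_2^{hh}}{\sigma_1^2}=\frac{w_{hh}}{\sigma_1}\left(1-\frac{w_{hh}}{\sigma_1}\right)\leq \frac{1}{4}
\]
from the maximum of $t(1-t)$; hence the anomalous term satisfies $-\delta w_{hh}\sigma_2^{hh}w_{hhh}^2/\sigma_1^2\geq -(\delta/4)w_{hhh}^2$.

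Next I would expand $\sum_{p\neq r}w_{pph}w_{rrh}=(\sigma_1)_h^2-\sum_i w_{iih}^2$, so the LHS of (\ref{e3.4}) dominates
\[
K(\sigma_2)_h^2-(\sigma_1)_h^2+(1-\delta/4)w_{hhh}^2+(1+\alpha)\sum_{i\neq h}w_{iih}^2.
\]
I then apply (\ref{1.5}) of Lemma \ref{lemma 1} (with $\sigma_1^{pp,qq}=0$) to obtain
\[
-(\sigma_1)_h^2\geq -\sum_i w_{iih}^2-\frac{(\sigma_2)_h^2}{\delta'\sigma_2}+(2-\delta')\sigma_2\frac{(\sigma_1)_h^2}{\sigma_1^2},
\]
substitute, and choose $K\geq 1/(\delta'\sigma_2)$ to absorb the $(\sigma_2)_h^2$ deficit. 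What remains is the positivity of
\[
(2-\delta')\sigma_2\frac{(\sigma_1)_h^2}{\sigma_1^2}-\frac{\delta}{4}w_{hhh}^2+\alpha\sum_{i\neq h}w_{iih}^2,
\]
which I would settle via Young's inequality $(\sigma_1)_h^2\geq (1-\eta)w_{hhh}^2-(n-1)(\eta^{-1}-1)\sum_{i\neq h}w_{iih}^2$, then picking $\eta$ moderate, $\delta$ small so that $\delta/4\leq (2-\delta')(1-\eta)\sigma_2/\sigma_1^2$, and $\alpha$ correspondingly large.

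The hard part will be the apparent need to choose $\delta$ depending on a positive lower bound for $\sigma_2/\sigma_1^2$, a ratio with no universal positive lower bound on $\Gamma_2$ (it degenerates as $W$ approaches the boundary of the cone). I expect this is handled by letting $K,\alpha,\delta$ in the statement depend on the prior $C^0,C^1$ bounds on $W$ along with $\inf f>0$ that are in force when this lemma is invoked in the proof of Theorem \ref{theo2 11}; alternatively, a case split on $\sigma_2^{hh}\geq c_0$ versus $\sigma_2^{hh}<c_0$ (using $\sigma_2^{hh}w_{hhh}=(\sigma_2)_h-\sum_{i\neq h}\sigma_2^{ii}w_{iih}$ to control $w_{hhh}^2$ in the non-degenerate regime, and the extra smallness of $w_{hh}\sigma_2^{hh}/\sigma_1^2\leq c_0/\sigma_1$ in the degenerate regime) would give a more intrinsic version of the statement.
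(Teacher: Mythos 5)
Your main line of attack has a gap that you yourself identify, and neither of your proposed repairs closes it. The crude bound $w_{hh}\sigma_2^{hh}/\sigma_1^2\leq 1/4$ converts the anomalous term into $-(\delta/4)w_{hhh}^2$, a full (unnormalized) square, whereas the only positive term you retain that can fight it is $(2-\delta')\sigma_2(\sigma_1)_h^2/\sigma_1^2$; this forces $\delta\lesssim \sigma_2/\sigma_1^2$, which has no positive lower bound on $\Gamma_2$. Your fix (a) is fatal in context: when the lemma is invoked in Lemma \ref{elem 11}, $\sigma_1\sim\kappa_1$ is precisely the quantity being estimated, so a $\delta$ depending on an upper bound for $\sigma_1$ makes the constant $c_j$ (hence $\delta_j=1/c_j$) degenerate as $\kappa_1\to\infty$, and the final absorption there requires $\varepsilon\leq \delta_j/(1-\delta_j)$ with $\varepsilon$ fixed in advance. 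Your fix (b) as sketched also does not close: in the regime $\sigma_2^{hh}\geq c_0$, writing $w_{hhh}=(\sigma_2^{hh})^{-1}\bigl((\sigma_2)_h-\sum_{i\neq h}\sigma_2^{ii}w_{iih}\bigr)$ and inserting it into $w_{hh}\sigma_2^{hh}w_{hhh}^2/\sigma_1^2$ produces (since $\sigma_2^{ii}$ can be comparable to $\sigma_1$ and $w_{hh}$ can be comparable to $\sigma_1$) a term of size $\sigma_1\sum_{i\neq h}w_{iih}^2$, which cannot be absorbed by $\alpha\sum_{i\neq h}w_{iih}^2$ with a uniform $\alpha$.

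The paper's mechanism is genuinely different and is the reason the lemma is called a ``more explicit version'' of Lemma \ref{lemma 1}: instead of the concavity inequality (\ref{1.5}), which only retains the term $\sigma_2(\sigma_1)_h^2/\sigma_1^2$, it uses the full sum-of-squares identity for $Q=\sigma_2/\sigma_1$,
\[
-Q^{pp,qq}w_{pph}w_{qqh}=\frac{\sum_i\bigl(w_{iih}\sigma_1-w_{ii}\sum_kw_{kkh}\bigr)^2}{\sigma_1^3},
\]
and keeps the individual squares. The $i=h$ square yields, after discarding cross terms into $-K(\sigma_2)_h^2-\alpha\sum_{i\neq h}w_{iih}^2$, the two quantities $\frac{(\sigma_2^{hh}w_{hhh})^2}{2\sigma_1^2}$ and $\frac{w_{hhh}^2\sum_{i\neq h}w_{ii}^2}{2\sigma_1^2}$, in addition to $\frac{\sigma_2\,w_{hhh}^2}{\sigma_1^2}$. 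These exactly match the structure of the bad term once one writes $w_{hh}\sigma_2^{hh}=\sigma_2-\frac12\sum_{a\neq b;\,a,b\neq h}w_{aa}w_{bb}$ and bounds $\bigl|\sum_{a\neq b;\,a,b\neq h}w_{aa}w_{bb}\bigr|\leq (n-1)\sum_{a\neq h}w_{aa}^2$; the normalization $1/\sigma_1^2$ is never sacrificed, and $\delta$ ends up depending only on $n$ (with $K$ depending in addition on a lower bound for $\sigma_2$, which in the application is $\inf f>0$). To repair your argument you would need to go back to this identity rather than to the already-integrated concavity statement (\ref{1.5}).
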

\begin{proof}
Consider function $$Q=\frac{\sigma_2(W)}{\sigma_1(W)}.$$ We have,
$$\sigma_1Q^{pp,qq}w_{pph}w_{qqh}=\sum_{p\neq q}w_{pph}w_{qqh}-\frac{2(\sigma_2)_h\sum_jw_{jjh}}{\sigma_1}+2\frac{\sigma_2(\sum_{j}w_{jjh})^2}{\sigma_1^2}.$$ On the other hand, one may write (e.g. \cite{HS})
$$-Q^{pp,qq}w_{pph}w_{qqh}=\frac{\sum_i(w_{iih}\sigma_1-w_{ii}\sum_kw_{kkh})^2}{\sigma_1^3}.$$ From the above two identities and the Schwartz inequality, with $K, \alpha$ large enough,
\begin{eqnarray}\label{e3.5}
&&-\sum_{p\neq r}w_{pph}w_{rrh}\\
&=&\frac{\sum_i(w_{iih}\sigma_1-w_{ii}\sum_kw_{kkh})^2}{\sigma_1^2}-\frac{2(\sigma_2)_h\sum_jw_{jjh}}{\sigma_1}+2\frac{\sigma_2(\sum_{j}w_{jjh})^2}{\sigma_1^2}\nonumber\\
&\geq&\frac{\sigma_2(\sum_{j}w_{jjh})^2}{\sigma_1^2}-K (\sigma_2)_h^2+\frac{(w_{hhh}\sigma_1-w_{hhh}w_{hh}-w_{hh}\sum_{k\neq h}w_{kkh})^2}{\sigma_1^2}\nonumber\\
&&+\frac{\sum_{i \neq h}(w_{iih}\sigma_1-w_{ii}w_{hhh}-w_{ii}\sum_{k\neq h}w_{kkh})^2}{\sigma_1^2}\nonumber\\
&\geq&\frac{\sigma_2(w_{hhh})^2}{\sigma_1^2}-K (\sigma_2)_h^2+\frac{(w_{hhh}\sigma_2^{hh})^2}{2\sigma_1^2}+\frac{w_{hhh}^2\sum_{i\neq h}w_{ii}^2}{2\sigma_1^2}-\alpha\sum_{i\neq h}w_{iih}^2.\nonumber
\end{eqnarray}
By \eqref{e3.5},
\begin{eqnarray}
 &&K(\sigma_2)_h^2-\sum_{p\neq r}w_{pph}w_{rrh}-\delta\sigma_2^{hh}w_{hh}\frac{w_{hhh}^2}{\sigma_1^2}+\alpha \sum_{i\neq h}w_{iih}^2\nonumber\\
 &\geq&\frac{\sigma_2(w_{hhh})^2}{\sigma_1^2}+\frac{w_{hhh}^2\sum_{i\neq h}w_{ii}^2}{2\sigma_1^2}-\delta\frac{\sigma_2^{hh}w_{hh}w_{hhh}^2}{\sigma_1^2}\nonumber.
  \end{eqnarray}
  Since, $$w_{hh}\sigma_2^{hh}=\sigma_2-\frac{1}{2}\sum_{a\neq b;a,b\neq h}w_{aa}w_{bb},$$ if $\delta$ is sufficient small, we obtain \eqref{e3.4}.
 \end{proof}

\bigskip

Theorem \ref{theo2 11} is a consequence of the following theorem.
\begin{theo}\label{theo2 11-0}
Suppose $k=2$ and suppose $M\subset \mathbb R^{n+1}$ is a starshaped $2$-convex hypersurface satisfying curvature equation (\ref{1.1}) for some positive function $f(X, \nu)\in C^{2}(\Gamma)$, where $\Gamma$ is an open neighborhood of unit normal bundle of $M$ in $\mathbb R^{n+1} \times \mathbb S^n$, then there is a constant $C$ depending only on $n, k$, $\|M\|_{C^1}$, $\inf f$ and $\|f\|_{C^2}$, such that
 \begin{equation}\label{M2c2-00}
 \max_{X\in M, i=1,\cdots, n} \kappa_i(X)  \le C(1+ \max_{X\in \partial M, i=1,\cdots, n} \kappa_i(X)).\end{equation}
\end{theo}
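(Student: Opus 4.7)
The plan is to apply the maximum principle to a test function of the form
\[W(X) = \log P(X) + \phi(u(X)) + A|X|^2,\]
where $P = \sum_l e^{\kappa_l(X)}$ is the smooth symmetric surrogate for $\kappa_1$ flagged in the preamble to this section, $u=\langle X,\nu\rangle$ is the support function (positive and bounded from below by the starshaped assumption together with the $C^1$ control of $M$), and $\phi$, $A$ are auxiliary barrier data to be chosen. If $W$ attains its maximum on $\partial M$, estimate (\ref{M2c2-00}) is immediate; otherwise $W$ is maximised at an interior $X_0$ and we argue by contradiction under the hypothesis $\kappa_1(X_0)\gg 1$. At $X_0$ choose a local orthonormal frame diagonalising $(h_{ij})$ with $\kappa_1\geq\cdots\geq\kappa_n$ and record the critical relations $W_i(X_0)=0$ and $\sigma_2^{ii}W_{ii}(X_0)\leq 0$.

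Applying the symmetric-function formula recalled at the end of Section 2 to the log-sum-exp $F(\lambda)=\log\sum_l e^{\lambda_l}$ yields
\[(\log P)_{,ii}=\ddot F^{jk}h_{jji}h_{kki}+\frac{e^{\kappa_j}}{P}h_{jjii}+2\sum_{j<k}\frac{e^{\kappa_j}-e^{\kappa_k}}{(\kappa_j-\kappa_k)P}h_{jki}^2.\]
Two features of $F$ are crucial: the Hessian $\ddot F^{jk}$ is positive semidefinite because log-sum-exp is convex in $\lambda$, and the difference quotients $(e^{\kappa_j}-e^{\kappa_k})/(\kappa_j-\kappa_k)$ are strictly positive. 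Contracted against $\sigma_2^{ii}>0$, both produce non-negative quadratic forms in the third covariant derivatives $h_{ijk}$; these positive off-diagonal contributions are precisely what a test function such as $\log h_{11}$ or $\log\kappa_1$ would fail to provide, and they are what makes the rest of the argument close.

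Next I would combine the Ricci/Gauss commutation $h_{jjii}=h_{iijj}+(\text{quartic in }\kappa)$ with the twice-differentiated equation, which for any index $j$ reads
\[\sigma_2^{ii}h_{iijj}=(f(X,\nu))_{,jj}-\sigma_2^{pq,rs}h_{pqj}h_{rsj}.\]
Since $-\sigma_2^{pq,rs}h_{pqj}h_{rsj}=-\sum_{p\neq r}h_{ppj}h_{rrj}+\sum_{p\neq r}h_{prj}^2$, the off-diagonal sum is absorbed by the gain in the previous paragraph, while $-\sum_{p\neq r}h_{ppj}h_{rrj}$ is handled by Lemma \ref{exch} applied with $h=1$. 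The resulting inequality supplies $K(\sigma_2)_1^2+\alpha\sum_{i\neq 1}h_{ii1}^2-\delta h_{11}\sigma_2^{11}h_{111}^2/\sigma_1^2$, and $h_{111}$ is then rewritten via $W_1=0$ in terms of gradients of the barrier. The weight $e^{\kappa_j}/P$ concentrates at $j=1$ when $\kappa_1$ is very large, so all the worst terms in the sum $\sum_j(e^{\kappa_j}/P)[\,\cdot\,]_j$ are effectively evaluated in that single direction, and the derivative-of-$f$ terms $(f(X,\nu))_{,jj}$ are bounded by $C(1+\kappa_j^2)$ using the once-differentiated equation $(\sigma_2)_j=f_{X_j}+f_{\nu_k}h_{kj}$.

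The final step, following Section 2, is to split on the size of $\kappa_n$: in the regime $|\kappa_n|\lesssim\kappa_1$ the positive reserves $A\sigma_2^{ii}$ and $\phi''(u)u_i^2\sigma_2^{ii}$ together with $\sum_i\sigma_2^{ii}=(n-1)\sigma_1\geq c\kappa_1$ suffice to dominate the remaining contributions; in the regime $\kappa_n\sim-\kappa_1$ the 2-convexity forces $\sigma_2^{nn}\sim\kappa_1$, so the cubic term $\sigma_2^{nn}\kappa_n^2\sim\kappa_1^3$ overwhelms every other contribution, yielding the contradiction. The main obstacle will be this second subcase, where the constants in $\phi$, $A$, and the small slack $\delta$ of Lemma \ref{exch} have to be tuned simultaneously against the curvature quartics of cubic growth; the restriction to $k=2$ enters only in Lemma \ref{exch} (sharp for $\sigma_2$) and in this final dichotomy, which is precisely why the analogous statement for $k>2$ in Section 4 needs the stronger convexity hypothesis on $M$.
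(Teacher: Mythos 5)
Your test function uses $\log P$ with $P=\sum_l e^{\kappa_l}$, whereas the paper's test function is $\phi=\log\log P-(1+\varepsilon)\log u+\tfrac{a}{2}|X|^2$. The inner logarithm is not cosmetic: it is the load-bearing choice of the argument, and its omission opens a genuine gap in your sketch. Differentiating $\log\log P$ twice and contracting with $\sigma_2^{ii}$ puts the overall factor $\tfrac{1}{P\log P}$ in front of the Ricci commutation terms. Concretely, the commutation $h_{ll,ii}=h_{ii,ll}+(h_{il}^2-h_{ii}h_{ll})h_{ii}+(h_{ii}h_{ll}-h_{il}^2)h_{ll}$, after contracting with $e^{\kappa_l}\sigma_2^{ii}$, produces $2f\sum_l e^{\kappa_l}\kappa_l^2-\sigma_2^{ii}h_{ii}^2\sum_l e^{\kappa_l}\kappa_l$. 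Since $\log P\geq\kappa_1\geq\sum_l e^{\kappa_l}\kappa_l/P$, the negative piece divided by $P\log P$ is dominated by $\sigma_2^{ii}h_{ii}^2$ exactly, and this is what the $(1+\varepsilon)$ in the $\log u$ barrier is designed to beat --- leaving the crucial reserve $\varepsilon\sigma_2^{ii}h_{ii}^2$ which drives the final dichotomy (your "cubic term $\sigma_2^{nn}\kappa_n^2$" is precisely this $\varepsilon\sigma_2^{ii}h_{ii}^2$). With $\log P$ you only get the factor $\tfrac1P$, so the commutation contributes terms of order $\kappa_1$ higher than in the paper. You assert that these will be handled by a barrier $\phi(u)+A|X|^2$, but $-N(\log u)_{ii}$ produces $N\sigma_2^{ii}h_{ii}^2$, which is one order of $\kappa_1$ too small to compete with the undamped commutation, and your proposal neither fixes $\phi$ nor verifies the sign of the resulting cubic balance. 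The fact that $\sum_i\sigma_2^{ii}\kappa_i^2=\sigma_1\sigma_2-3\sigma_3$ and $\sigma_3$ is unsigned in $\Gamma_2$ makes this balance delicate, and it is exactly what the paper sidesteps with the double log.

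A second, related gap: you propose to rewrite $h_{111}$ via $W_1=0$. But the critical equation gives control of $P_1/P=\sum_l (e^{\kappa_l}/P)h_{ll1}$, a weighted average over all $l$, not of $h_{111}$ itself, and the weight $e^{\kappa_1}/P$ need not concentrate at $l=1$ (when $\kappa_2$ is comparable to $\kappa_1$, a regime the paper handles via Lemma~\ref{elem 11}, the weights are spread). Your appeal to "the worst terms being effectively evaluated in a single direction" is precisely where the paper instead introduces the case split $n\kappa_i\gtrless\kappa_1$ and the three technical Lemmas~\ref{elem 9}--\ref{elem 11}, each exploiting the $\tfrac{1}{P\log P}$ structure. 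Also, the convexity of log-sum-exp that you invoke degenerates in this regime: $\ddot F^{jk}h_{jji}h_{kki}$ equals the variance of $(h_{jji})_j$ under the weights $e^{\kappa_j}/P$, which tends to zero as the weight concentrates; this positivity cannot be relied on as a reserve. So while your structural outline (log-sum-exp surrogate, Lemma~\ref{exch}, final $\sigma_2^{22}$ dichotomy) correctly identifies the ingredients, the single-log test function is not a drop-in replacement, and the missing damping is a substantive gap, not a bookkeeping detail.
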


Set
\begin{eqnarray}
P=\sum_le^{\kappa_l}, \quad
\phi=\log\log P-(1+\varepsilon)\log u+\frac{a}{2}|X|^2,\end{eqnarray}
where $\varepsilon$ and $a$ are constants which will be determined later.  We may assume that the maximum of
$\phi$ is achieved  at some point $X_0\in M$. After
rotating the coordinates, we may assume the matrix $(h_{ij})$ is diagonal at the point, and we can further  assume that $h_{11}\geq h_{22}\cdots\geq h_{nn}$. Denote $\kappa_i=h_{ii}$.
\par
Differentiate the function twice  at  $X_0$,
\begin{equation}\label{e2.3}
\phi_i=\dfrac{P_i}{P\log P}- (1+\varepsilon)\frac{h_{ii}\langle X,\p_i\rangle}{u}+a\langle \p_i, X\rangle=0,
\end{equation}
and by (\ref{1.0}),
\begin{eqnarray}\label{e2.4}
&&\phi_{ii}\\
&=& \frac{P_{ii}}{P\log P}-\frac{P_i^2}{P^2\log P}-\frac{P_i^2}{(P\log P)^2}- \frac{1+\varepsilon}{u}\sum_lh_{il,i}\langle \p_l,X \rangle-\frac{(1+\varepsilon) h_{ii}}{u}\nonumber\\&&+(1+\varepsilon)h_{ii}^2+(1+\varepsilon)\frac{h_{ii}^2\langle X,\p_i\rangle^2}{u^2}+a-aUh_{ii}\nonumber\\
&=&\frac{1}{P\log P}[\sum_le^{\kappa_l}h_{llii}+\sum_le^{\kappa_l}h_{lli}^2+\sum_{\alpha\neq \beta}\frac{e^{\kappa_{\alpha}}-e^{\kappa_{\beta}}}{\kappa_{\alpha}-\kappa_{\beta}}h_{\alpha\beta i}^2-(\frac{1}{P}+\frac{1}{P\log P})P_i^2]\nonumber\\
&&- \frac{(1+\varepsilon)\sum_lh_{iil}\langle \p_l,X \rangle}{u}-\frac{ (1+\varepsilon)h_{ii}}{u}+(1+\varepsilon)h_{ii}^2+(1+\varepsilon)\frac{h_{ii}^2\langle X,\p_i\rangle^2}{u^2}\nonumber\\&&+a-aUh_{ii}\nonumber\\
&=&\frac{1}{P\log P}[\sum_le^{\kappa_l}h_{ii,ll}+\sum_le^{\kappa_l}(h_{il}^2-h_{ii}h_{ll})h_{ii}+\sum_le^{\kappa_l}(h_{ii}h_{ll}-h_{il}^2)h_{ll}\nonumber\\
&&+\sum_le^{\kappa_l}h_{lli}^2+\sum_{\alpha\neq \beta}\frac{e^{\kappa_{\alpha}}-e^{\kappa_{\beta}}}{\kappa_{\alpha}-\kappa_{\beta}}h_{\alpha\beta i}^2-(\frac{1}{P}+\frac{1}{P\log P})P_i^2]\nonumber\\
&&- \frac{(1+\varepsilon)\sum_lh_{iil}\langle \p_l,X \rangle}{u}-\frac{ (1+\varepsilon)h_{ii}}{u}+(1+\varepsilon)h_{ii}^2+(1+\varepsilon)\frac{h_{ii}^2\langle X,\p_i\rangle^2}{u^2}\nonumber\\&&+a-aUh_{ii}\nonumber
\end{eqnarray}
Contract with $\sigma_2^{ii}$,
\begin{eqnarray}\label{e2.5}
&&\sigma_2^{ii}\phi_{ii}\\
&=&\frac{1}{P\log P}[\sum_le^{\kappa_l}\sigma_2^{ii}h_{ii,ll}+2f\sum_le^{\kappa_l}h_{ll}^2-\sigma_2^{ii}h_{ii}^2\sum_le^{\kappa_l}h_{ll}+\sum_l\sigma_2^{ii}e^{\kappa_l}h_{lli}^2\nonumber\\&&+\sum_{\alpha\neq \beta}\sigma_2^{ii}\frac{e^{\kappa_{\alpha}}-e^{\kappa_{\beta}}}{\kappa_{\alpha}-\kappa_{\beta}}h_{\alpha\beta i}^2-(\frac{1}{P}+\frac{1}{P\log P})\sigma_2^{ii}P_i^2]+(n-1)a\sigma_1-2afuU\nonumber\\
&&- \frac{(1+\varepsilon)\sum_l\sigma_2^{ii}h_{iil}\langle \p_l,X \rangle}{u}-\frac{ (1+\varepsilon)2f}{u}+(1+\varepsilon)\sigma_2^{ii}h_{ii}^2+(1+\varepsilon)\frac{\sigma_2^{ii}h_{ii}^2\langle X,\p_i\rangle^2}{u^2}.\nonumber
\end{eqnarray}\par
At $x_0$, differentiate equation (\ref{1.1}) twice,
\begin{eqnarray}\label{e2.6}
\sigma_2^{ii}h_{iik}&=&d_Xf(\p_k)+h_{kk}d_{\nu}f(\p_k),
\end{eqnarray}
and
\begin{eqnarray}\label{e2.7}
\sigma_2^{ii}h_{iikk}+\sigma_2^{pq,rs}h_{pqk}h_{rsk}&\geq& -C-Ch_{11}^2+\sum_lh_{lkk}d_{\nu}f(\p_l),
\end{eqnarray}
where $C$ is a constant under control.

Insert  (\ref{e2.7}) into (\ref{e2.5}),
\begin{eqnarray}
&&\sigma_2^{ii}\phi_{ii}\\
&\geq &\frac{1}{P\log P}[\sum_le^{\kappa_l}(-C-Ch_{11}^2-\sigma_2^{pq,rs}h_{pql}h_{rsl})+\sum_le^{\kappa_l}h_{lkk}d_{\nu}f(\p_l)+2f\sum_le^{\kappa_l}h_{ll}^2\nonumber\\
&&-\sigma_2^{ii}h_{ii}^2\sum_le^{\kappa_l}h_{ll}+\sum_l\sigma_2^{ii}e^{\kappa_l}h_{lli}^2+\sum_{\alpha\neq \beta}\sigma_2^{ii}\frac{e^{\kappa_{\alpha}}-e^{\kappa_{\beta}}}{\kappa_{\alpha}-\kappa_{\beta}}h_{\alpha\beta i}^2-(\frac{1}{P}+\frac{1}{P\log P})\sigma_2^{ii}P_i^2]\nonumber\\
&&- \frac{(1+\varepsilon)\sum_l\sigma_2^{ii}h_{iil}\langle \p_l,X \rangle}{u}+(1+\varepsilon)\sigma_2^{ii}h_{ii}^2+(1+\varepsilon)\frac{\sigma_2^{ii}h_{ii}^2\langle X,\p_i\rangle^2}{u^2}.\nonumber\\
&&+a\kappa_1-Ca\nonumber
\end{eqnarray}
By (\ref{e2.3}) and (\ref{e2.6}),
\begin{eqnarray}
&&\sum_k d_{\nu}f(\p_k)\frac{\sum_le^{\kappa_l}h_{llk}}{P\log P}-\frac{1+\varepsilon}{u}\sum_k\sigma_2^{ii}h_{iik}\langle  \p_k, X\rangle\\
&=&-a\sum_kd_{\nu}f(\p_k)\langle X,\p_k\rangle -\frac{1+\varepsilon}{u}\sum_kd_Xf(\p_k)\langle X,\p_k\rangle.\nonumber
\end{eqnarray}

Denote
\begin{eqnarray}
&&A_i=e^{\kappa_i}(K(\sigma_2)_i^2-\sum_{p\neq q}h_{ppi}h_{qqi}), \ \  B_i=2\sum_{l\neq i}e^{\kappa_l}h_{lli}^2, \ \  C_i=\sigma_2^{ii}\sum_le^{\kappa_l}h_{lli}^2;\nonumber\\
&&D_i=2\sum_{l\neq i}\sigma_2^{ll}\frac{e^{\kappa_l}-e^{\kappa_i}}{\kappa_l-\kappa_i}h_{lli}^2, \ \ E_i=(\frac{1}{P}+\frac{1}{P\log P})\sigma_2^{ii}P_i^2\nonumber.
\end{eqnarray}

Note that $\log P\geq \kappa_1$,
\begin{eqnarray}\label{e2.8}
&&\sigma_2^{ii}\phi_{ii}\\
&\geq &-Ca+(a-C)h_{11}+\frac{1}{P\log P}\sum_{i}e^{\kappa_l}(K(\sigma_2)_l^2-\sum_{p\neq q}h_{ppl}h_{qql}+\sum_{p\neq q}h_{pql}^2)\nonumber\\
&&+\sum_l\sigma_2^{ii}e^{\kappa_l}h_{lli}^2+\sum_{\alpha\neq \beta}\sigma_2^{ii}\frac{e^{\kappa_{\alpha}}-e^{\kappa_{\beta}}}{\kappa_{\alpha}-\kappa_{\beta}}h_{\alpha\beta i}^2-(\frac{1}{P}+\frac{1}{P\log P})\sigma_2^{ii}P_i^2]\nonumber\\
&&+\varepsilon \sigma_2^{ii}h_{ii}^2
+(1+\varepsilon)\frac{\sigma_2^{ii}h_{ii}^2\langle X,\p_i\rangle^2}{u^2}.\nonumber\\
&=& -Ca+(a-C)h_{11}+\frac{1}{P\log P}\sum_{i}(A_i+B_i+C_i+D_i-E_i)\nonumber\\
&&+\varepsilon \sigma_2^{ii}h_{ii}^2
+(1+\varepsilon)\frac{\sigma_2^{ii}h_{ii}^2\langle X,\p_i\rangle^2}{u^2}. \nonumber
\end{eqnarray}

Choose $k=2,l=1$ and $h=i$ in Lemma \ref{lemma 1},
$$
-\sum_{p\neq r}h_{ppi}h_{rri}+(1-\alpha+\frac{\alpha}{\delta})\frac{(\sigma_2)_i^2}{\sigma_2}
\ \ \geq \ \ (\alpha +1-\delta\alpha )\sigma_2[\frac{(\sigma_1)_i}{\sigma_1}]^2\ \ \geq \ \ 0.
$$
Hence, $A_i\geq 0$ if $K$ is sufficiently large.

\begin{lemm}\label{elem 9}
If $$n\kappa_i\leq  \kappa_1,$$  for any  fixed $i\neq 1$ and taking
$\kappa_1$ sufficient large, we have, $$B_i+C_i+D_i-E_i\geq 0.$$
\end{lemm}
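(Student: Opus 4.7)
The goal is to show $B_i+C_i+D_i-E_i\geq 0$ under the assumption $n\kappa_i\leq \kappa_1$ and with $\kappa_1$ sufficiently large. My strategy is to bound $E_i$ above by $C_i$ via Cauchy--Schwarz and then absorb the small residual into $B_i+D_i$. The key auxiliary fact is that the assumption $n\kappa_i\leq \kappa_1$ forces $e^{\kappa_i}/P$ to be exponentially small (since $P\geq e^{\kappa_1}\geq e^{n\kappa_i}$), so the $l=i$ contribution to $E_i$ is negligible once $\kappa_1$ is large.

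The main step is the Cauchy--Schwarz bound
\[
P_i^2=\Bigl(\sum_l e^{\kappa_l}h_{lli}\Bigr)^2\leq P\sum_l e^{\kappa_l}h_{lli}^2,
\]
which, multiplied by $(\tfrac{1}{P}+\tfrac{1}{P\log P})\sigma_2^{ii}$, yields $E_i\leq (1+\tfrac{1}{\log P})C_i$, i.e. $C_i-E_i\geq -C_i/\log P$. Since $\log P\geq \kappa_1$ and $\sigma_2^{ii}=\sigma_1-\kappa_i\leq n\kappa_1$, the residual $C_i/\log P$ has coefficient $\sigma_2^{ii}/\log P\leq n+O(1/\kappa_1)$ on each $e^{\kappa_l}h_{lli}^2$. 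For $l\neq i$, one compares this to the coefficient $2e^{\kappa_l}+2\sigma_2^{ll}\tfrac{e^{\kappa_l}-e^{\kappa_i}}{\kappa_l-\kappa_i}$ in $B_i+D_i$: the convexity of $e^x$ gives $\tfrac{e^{\kappa_l}-e^{\kappa_i}}{\kappa_l-\kappa_i}\geq \min(e^{\kappa_l},e^{\kappa_i})$ and, for $l=1$, this ratio is comparable to $e^{\kappa_1}/(\kappa_1-\kappa_i)$, which provides the margin needed. The leftover $l=i$ piece $\tfrac{\sigma_2^{ii}}{\log P}e^{\kappa_i}h_{iii}^2$, which is not present in $B_i$ or $D_i$, is handled via the first-order critical point relation \eqref{e2.3}: this expresses $P_i$ and hence $h_{iii}$ in terms of $h_{ii}$ and bounded geometric quantities, so that the exponential smallness of $e^{\kappa_i}/P$ absorbs this residual into the positive $\varepsilon\sigma_2^{ii}h_{ii}^2$ contribution already present in \eqref{e2.8}.

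The main obstacle is the coefficient-by-coefficient comparison described above: if some $\sigma_2^{ll}$ is small for $l\neq 1,i$ (so that $D_i$ contributes little at that index), the naive argument may fail. The remedy is to replace the plain Cauchy--Schwarz with a weighted one, splitting $P_i=e^{\kappa_i}h_{iii}+\sum_{l\neq i}e^{\kappa_l}h_{lli}$ and applying the weighted inequality to the second sum with weights proportional to the coefficients of $h_{lli}^2$ in $B_i+D_i$. This matches the two sides exactly for $l\neq i$ and leaves a residual at $l=i$ that is dominated by $e^{\kappa_i}/P\leq e^{-(n-1)\kappa_1/n}$, completing the argument once $\kappa_1$ is taken sufficiently large.
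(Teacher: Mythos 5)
Your opening moves coincide with the paper's: Cauchy--Schwarz on $P_i^2$, absorption of the bulk of $E_i$ by $C_i$, control of the residual $\tfrac{1}{\log P}C_i$ by $B_i$ via $\sigma_2^{ii}\leq 2\log P$, and the observation that $e^{\kappa_i}/P\leq e^{-(1-1/n)\kappa_1}$. The gap is in your treatment of the $h_{iii}$ contribution, which is the actual crux. First, the critical-point relation \eqref{e2.3} controls $P_i=\sum_l e^{\kappa_l}h_{lli}$, not $h_{iii}$: the dominant weight $e^{\kappa_1}$ sits on $h_{11i}$, so $P_i$ being bounded tells you nothing about $h_{iii}$ individually, and you cannot ``express $h_{iii}$ in terms of $h_{ii}$ and bounded quantities.'' (Moreover the lemma is invoked in Case (A) of the main argument precisely as the self-contained inequality $B_i+C_i+D_i-E_i\geq 0$; borrowing $\varepsilon\sigma_2^{ii}h_{ii}^2$ from \eqref{e2.8} would change the statement.) Second, the weighted Cauchy--Schwarz remedy does not close either. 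The part of $E_i$ carried by the indices $l\neq i$ has coefficient of order $\sigma_2^{ii}e^{\kappa_l}$ with $\sigma_2^{ii}\sim\kappa_1$, so it must be absorbed by $C_i$, not by $B_i+D_i$; and with the correct weights (those of $B_i+C_i+D_i$) the absorption is exactly borderline --- the margin at index $l$ is $\bigl(2-\tfrac{\sigma_2^{ii}}{\log P}\bigr)e^{\kappa_l}\geq 0$, which can vanish --- so there is no room for the multiplicative loss $(1+\theta^{-1})$ that splitting $\bigl(e^{\kappa_i}h_{iii}+\sum_{l\neq i}e^{\kappa_l}h_{lli}\bigr)^2$ into two separate squares imposes on the second square. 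Note also that the $l=1$ coefficient of $D_i$ is $2\sigma_2^{11}\tfrac{e^{\kappa_1}-e^{\kappa_i}}{\kappa_1-\kappa_i}\sim\tfrac{\sigma_2^{11}}{\kappa_1}e^{\kappa_1}$, and $\sigma_2^{11}=\sum_{j\geq 2}\kappa_j$ may be $o(\kappa_1)$ for a merely $2$-convex hypersurface, so this term is \emph{not} comparable to $e^{\kappa_1}$ and cannot supply the margin you invoke in your coefficient-by-coefficient comparison.

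The paper instead keeps the cross term: it applies Cauchy--Schwarz only to $\bigl(\sum_{l\neq i}e^{\kappa_l}h_{lli}\bigr)^2$, absorbs the cross terms $h_{iii}h_{lli}$ for $l\neq 1,i$ by AM--GM against the surplus $\bigl(\tfrac1P+\tfrac1{P\log P}\bigr)\sigma_2^{ii}e^{\kappa_i+\kappa_l}h_{lli}^2$, and is left with a $2\times 2$ quadratic form in $(h_{iii},h_{11i})$ with off-diagonal entry $-\tfrac1P\sigma_2^{ii}e^{\kappa_i+\kappa_1}$. Its nonnegativity rests on the diagonal bounds $\bigl[\tfrac{e^{\kappa_1}}{P}-\tfrac1{\log P}\bigr]\sigma_2^{ii}e^{\kappa_i}\geq\tfrac1{2n}\sigma_2^{ii}e^{\kappa_i}$ and, decisively, $2\sigma_2^{11}\tfrac{e^{\kappa_1}-e^{\kappa_i}}{\kappa_1-\kappa_i}\geq c_0\kappa_1\,\sigma_2^{ii}\tfrac{e^{\kappa_1+\kappa_i}}{P}$, which follows from $\kappa_1\sigma_2^{11}\geq\tfrac{2}{n}\sigma_2\geq c>0$ and a Taylor expansion of $e^{\kappa_1-\kappa_i}$. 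The point is that the $l=1$ term of $D_i$ need only dominate $\sigma_2^{ii}e^{\kappa_1+\kappa_i}/P$ (which it does by a factor $\kappa_1$), not $e^{\kappa_1}$. This discriminant argument is the ingredient missing from your proposal.
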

\begin{proof}
We have,
$$P_i^2=(e^{\kappa_i}h_{iii}+\sum_{l\neq i}e^{\kappa_l}h_{lli})^2=e^{2\kappa_i}h_{iii}^2+2\sum_{l\neq i}e^{\kappa_i+\kappa_l}h_{lli}h_{iii}+(\sum_{l\neq i}e^{\kappa_l}h_{lli})^2.$$ By the Schwartz inequality,  $$(\sum_{l\neq i}e^{\kappa_l}h_{lli})^2\leq \sum_{l\neq i}e^{\kappa_l}\sum_{l\neq i}e^{\kappa_l}h_{lli}^2.$$ Hence, $$P_i^2\leq e^{2\kappa_i}h_{iii}^2+2\sum_{l\neq i}e^{\kappa_l+\kappa_i}h_{lli}h_{iii}+(P-e^{\kappa_i})\sum_{l\neq i}e^{\kappa_l}h_{lli}^2.$$ In turn,
\begin{eqnarray}\label{e3.14}
&&B_i+C_i+D_i-E_i\\
&\geq&\sum_{l\neq i}(2e^{\kappa_l}+\sigma_2^{ii}e^{\kappa_l}+2\sigma_2^{ll}\frac{e^{\kappa_l}-e^{\kappa_i}}{\kappa_l-\kappa_i})h_{lli}^2+\sigma_2^{ii}e^{\kappa_i}h_{iii}^2-(\frac{1}{P}+\frac{1}{P\log P})\sigma_2^{ii}e^{2\kappa_i}h_{iii}^2\nonumber\\
&&-(\frac{1}{P}+\frac{1}{P\log P})(P-e^{\kappa_i})\sigma_2^{ii}\sum_{l\neq i}e^{\kappa_l}h_{lli}^2-2(\frac{1}{P}+\frac{1}{P\log P})\sigma_2^{ii}\sum_{l\neq i}e^{\kappa_i+\kappa_l}h_{iii}h_{lli}\nonumber\\
&=&\sum_{l\neq i}[(2-\frac{\sigma_2^{ii}}{\log P})e^{\kappa_l}+(\frac{1}{P}+\frac{1}{P\log P})\sigma_2^{ii}e^{\kappa_l+\kappa_i}+2\sigma_2^{ll}\frac{e^{\kappa_l}-e^{\kappa_i}}{\kappa_l-\kappa_i}]h_{lli}^2\nonumber\\
&&+[1-(\frac{1}{P}+\frac{1}{P\log P})e^{\kappa_i}]\sigma_2^{ii}e^{\kappa_i}h_{iii}^2-2(\frac{1}{P}+\frac{1}{P\log P})\sigma_2^{ii}\sum_{l\neq i}e^{\kappa_i+\kappa_l}h_{iii}h_{lli}\nonumber.
\end{eqnarray}
As $$h_{lli}^2+h_{iii}^2\geq 2h_{lli}h_{iii},$$
\begin{eqnarray}\label{e3.15}
&&\sum_{l\neq i,1}(\frac{1}{P}+\frac{1}{P\log P})\sigma_2^{ii}e^{\kappa_l+\kappa_i}h_{lli}^2+\sum_{l\neq i,1}(\frac{1}{P}+\frac{1}{P\log P})\sigma_2^{ii}e^{\kappa_l+\kappa_i}h_{iii}^2\\
&\geq &2(\frac{1}{P}+\frac{1}{P\log P})\sum_{l\neq i,1}\sigma_2^{ii}e^{\kappa_l+\kappa_i}h_{iii}h_{lli}.\nonumber
\end{eqnarray}
Combine \eqref{e3.14} and \eqref{e3.15},
\begin{eqnarray}\label{e3.16}
&&B_i+C_i+D_i-E_i\\
&\geq &\sum_{l\neq i}[(2-\frac{\sigma_2^{ii}}{\log P})e^{\kappa_l}+2\sigma_2^{ll}\frac{e^{\kappa_l}-e^{\kappa_i}}{\kappa_l-\kappa_i}]h_{lli}^2+(\frac{1}{P}+\frac{1}{P\log P})\sigma_2^{ii}e^{\kappa_1+\kappa_i}h_{11i}^2\nonumber\\
&&+[(\frac{1}{P}+\frac{1}{P\log P})e^{\kappa_1}-\frac{1}{\log P}]\sigma_2^{ii}e^{\kappa_i}h_{iii}^2-2(\frac{1}{P}+\frac{1}{P\log P})\sigma_2^{ii}e^{\kappa_i+\kappa_1}h_{iii}h_{11i}\nonumber.\\
&\geq&\sum_{l\neq i}(2-\frac{\sigma_2^{ii}}{\log P})e^{\kappa_l}h_{lli}^2+2\sigma_2^{11}\frac{e^{\kappa_1}-e^{\kappa_i}}{\kappa_1-\kappa_i}h_{11i}^2+\frac{1}{P}\sigma_2^{ii}e^{\kappa_1+\kappa_i}h_{11i}^2\nonumber\\
&&+[\frac{e^{\kappa_1}}{P}-\frac{1}{\log P}]\sigma_2^{ii}e^{\kappa_i}h_{iii}^2-2\frac{1}{P}\sigma_2^{ii}e^{\kappa_i+\kappa_1}h_{iii}h_{11i}\nonumber.
\end{eqnarray}
By the assumptions in the lemma, we have, for $i\geq 2$,
$$2\log P\geq 2\kappa_1\geq \sigma_2^{ii}.$$ Taking $\kappa_1$ sufficient large, we have, $$\frac{e^{\kappa_1}}{2P}\geq \frac{1}{2n}\geq \frac{1}{\log P}.$$
Expanding $e^{x}$ and as $n\kappa_i\leq \kappa_1$,
\begin{eqnarray}
\sigma_2^{11}\frac{e^{\kappa_1}-e^{\kappa_i}}{\kappa_1-\kappa_i}&=&\sigma_2^{11}e^{\kappa_i}\frac{e^{\kappa_1-\kappa_i}-1}{\kappa_1-\kappa_i}=\sigma_2^{11}e^{\kappa_i}\sum_{l=1}^{\infty}\frac{(\kappa_1-\kappa_i)^{l-1}}{l!}\nonumber\\&\geq& \sigma_2^{11}e^{\kappa_i}\frac{(\kappa_1-\kappa_i)^3}{4!} \geq c_0\kappa_1^3\sigma_2^{11}e^{\kappa_i}\geq c_0\kappa_1\sigma_2^{ii}\frac{e^{\kappa_i+\kappa_1}}{P}\nonumber,
\end{eqnarray}
for some positive constant $c_0$. Here, we have used the fact $\kappa_1\sigma_2^{11}\geq 2\sigma_2/n$. The lemma follows from \eqref{e3.16},  previous three inequalities, provided $\kappa_1$ is sufficiently large.
\end{proof}

\begin{lemm}\label{elem 10}
For any index $i$, if $$n\kappa_i\leq  \kappa_1,$$  for any  fixed $j\geq i$ and taking $\kappa_1$ sufficient large, we have, $$B_j+C_j+D_j-(\frac{1}{P}+\frac{2}{n-1}\frac{1}{P\log P})\sigma_2^{jj}P_j^2\geq 0.$$
\end{lemm}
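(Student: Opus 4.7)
My plan is to follow the strategy used in the proof of Lemma \ref{elem 9}, carefully tracking the bookkeeping required to replace the coefficient $\frac{1}{P\log P}$ with the sharper $\frac{2}{n-1}\frac{1}{P\log P}$. The hypothesis $j\geq i$ combined with $n\kappa_i\leq\kappa_1$ gives $\kappa_j\leq\kappa_i\leq \kappa_1/n$, so in particular $\sigma_2^{jj}=\sigma_1-\kappa_j$ is controlled (essentially by $(n-1)\kappa_1\leq (n-1)\log P$), and the factor $\frac{2}{n-1}$ ultimately arises from distributing cross terms across the $n-1$ indices $l\neq j$.

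The first step is to absorb the $\frac{1}{P}\sigma_2^{jj}P_j^2$ piece by the Cauchy--Schwarz inequality
\[
P_j^2 \;=\; \Bigl(\sum_l e^{\kappa_l/2}\cdot e^{\kappa_l/2}h_{llj}\Bigr)^2 \;\leq\; P\sum_l e^{\kappa_l}h_{llj}^2 \;=\; \frac{P}{\sigma_2^{jj}}\,C_j,
\]
which gives $\tfrac{1}{P}\sigma_2^{jj}P_j^2\leq C_j$ at once and reduces the claim to
\[
B_j+D_j \;\geq\; \frac{2}{n-1}\,\frac{1}{P\log P}\,\sigma_2^{jj}P_j^2.
\]
For this reduced inequality I would mimic the expansion in Lemma \ref{elem 9}: split $P_j=e^{\kappa_j}h_{jjj}+\sum_{l\neq j}e^{\kappa_l}h_{llj}$, apply Cauchy--Schwarz to $\bigl(\sum_{l\neq j}e^{\kappa_l}h_{llj}\bigr)^2\leq (P-e^{\kappa_j})\sum_{l\neq j}e^{\kappa_l}h_{llj}^2$, and distribute the cross term $2e^{\kappa_j}h_{jjj}\sum_{l\neq j}e^{\kappa_l}h_{llj}$ by Young's inequality uniformly across the $n-1$ indices $l\neq j$, so that each index carries a share of size $\sim \frac{1}{n-1}$. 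The bulk contribution $\sum_{l\neq j}e^{\kappa_l}h_{llj}^2$ is then absorbed by $B_j$ using $\sigma_2^{jj}\leq (n-1)\log P$, while the $h_{11j}^2$ piece is absorbed by the $l=1$ summand of $D_j$: under $n\kappa_j\leq \kappa_1$, Taylor expansion of $e^{\kappa_1-\kappa_j}$ together with $\kappa_1\sigma_2^{11}\geq 2\sigma_2/n$ yields
\[
\sigma_2^{11}\frac{e^{\kappa_1}-e^{\kappa_j}}{\kappa_1-\kappa_j} \;\geq\; c_0\,\kappa_1^3\,\sigma_2^{11}e^{\kappa_j} \;\geq\; c_0\,\kappa_1\,\sigma_2^{jj}\,\frac{e^{\kappa_j+\kappa_1}}{P},
\]
which dominates all residual contributions once $\kappa_1$ is sufficiently large.

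The main obstacle I anticipate is confirming that the uniform distribution of the cross term among the $n-1$ indices $l\neq j$ produces exactly the coefficient $\frac{2}{n-1}$, not something worse, while simultaneously leaving enough slack in the $l=1$ piece of $D_j$ to absorb the residual $h_{11j}^2$ and $h_{jjj}^2$ terms. A secondary subtlety is the case where $\kappa_j<0$ (possible when $j=n$), in which case the bound on $\sigma_2^{jj}$ must be adjusted using the $\Gamma_2$ constraint $|\kappa_n|\leq(n-1)\kappa_1$; one would then need to split off the dominant $l=1$ index separately (treating it by the Taylor estimate above) and distribute only the remaining cross terms among the $n-2$ indices $l\neq 1,j$, which still yields a coefficient compatible with $\frac{2}{n-1}$ after absorbing factors of order unity into the choice of sufficiently large $\kappa_1$.
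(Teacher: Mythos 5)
Your opening reduction step is incorrect, and this is a genuine gap. You propose to absorb the $\tfrac{1}{P}\sigma_2^{jj}P_j^2$ piece entirely into $C_j$ via the Cauchy--Schwarz inequality $P_j^2\leq P\sum_l e^{\kappa_l}h_{llj}^2$, reducing the claim to $B_j+D_j\geq\tfrac{2}{n-1}\tfrac{1}{P\log P}\sigma_2^{jj}P_j^2$. But this reduced inequality is false: $B_j=2\sum_{l\neq j}e^{\kappa_l}h_{llj}^2$ and $D_j=2\sum_{l\neq j}\sigma_2^{ll}\frac{e^{\kappa_l}-e^{\kappa_j}}{\kappa_l-\kappa_j}h_{llj}^2$ contain no $h_{jjj}^2$ contribution, while the right-hand side contains the term $\tfrac{2}{n-1}\tfrac{\sigma_2^{jj}}{P\log P}e^{2\kappa_j}h_{jjj}^2>0$. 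Setting $h_{llj}=0$ for all $l\neq j$ and $h_{jjj}\neq 0$ makes the left side vanish and the right side strictly positive. The lemma itself is true in that configuration because $C_j=\sigma_2^{jj}e^{\kappa_j}h_{jjj}^2$ exceeds $(\tfrac{1}{P}+\tfrac{2}{n-1}\tfrac{1}{P\log P})\sigma_2^{jj}e^{2\kappa_j}h_{jjj}^2$ with plenty of room to spare; by burning all of $C_j$ on the Cauchy--Schwarz step you discard exactly the slack needed to handle $h_{jjj}^2$. The paper's proof never separates the two parts of the coefficient: it bundles $\tfrac{1}{P}+\tfrac{2}{n-1}\tfrac{1}{P\log P}$ and re-runs the full expansion from Lemma~\ref{elem 9}, so that the $h_{jjj}^2$ terms coming from $C_j$ and from the expansion of $P_j^2$ in $E_j$ are matched against each other.

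Your diagnosis of the role of $\tfrac{2}{n-1}$ is also off. It is not a byproduct of distributing a cross term over $n-1$ indices; it is chosen so that the coefficient $2-\tfrac{2}{n-1}\tfrac{\sigma_2^{jj}}{\log P}$ of $e^{\kappa_l}h_{llj}^2$ stays nonnegative under the only bound that holds for general $j$, namely $\sigma_2^{jj}\leq(n-1)\kappa_1\leq(n-1)\log P$. In Lemma~\ref{elem 9} the sharper bound $\sigma_2^{ii}\leq 2\kappa_1$ (available because $n\kappa_2\leq\kappa_1$ there) justifies the coefficient $1$; here that sharper bound is unavailable, and $\tfrac{2}{n-1}$ is precisely what compensates. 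Once this is noted, the paper's proof of Lemma~\ref{elem 10} is essentially a one-line remark that the argument of Lemma~\ref{elem 9} goes through verbatim with the smaller constant; the extra case analysis you sketch for $\kappa_j<0$ is not needed because no part of the argument used a sign of $\kappa_j$, only the universal bound on $\sigma_2^{jj}$.
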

\begin{proof}
Replace the term $\dfrac{1}{P\log P}$ by $\dfrac{2}{n-1}\dfrac{1}{P\log P}$ in the proof of previous lemma, note that
$$2-\frac{2}{n-1}\frac{\sigma_2^{jj}}{\log P}\geq \frac{1}{\kappa_1}(2\kappa_1-\frac{2}{n-1}\sigma_2^{jj})\geq 0.$$ Hence, the arguments in the previous proof can be carried out without further changes.
\end{proof}

\begin{lemm}\label{elem 11}
For any fixed index $j$, if $$n\kappa_j > \kappa_1,$$ we have, for sufficient large $\kappa_1,K$ and sufficient small $\varepsilon$,
$$\frac{1}{P\log P}(A_j+B_j+C_j+D_j-E_j)+(1+\varepsilon)\frac{\sigma_2^{jj}h_{jj}^2\langle X,\p_j\rangle^2}{u^2}\geq 0.$$
\end{lemm}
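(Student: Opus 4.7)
The plan is to exploit the critical-point identity (\ref{e2.3}) together with a Cauchy--Schwarz bound on $P_j$ to recast $E_j$ in a form that can be compared directly with the good boundary term $(1+\varepsilon)\sigma_2^{jj}h_{jj}^2\langle X,\p_j\rangle^2/u^2$, and then to absorb the residual using the hypothesis $n\kappa_j>\kappa_1$ along with Lemma \ref{exch} and positivity of $B_j,D_j$.

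First I would apply Cauchy--Schwarz to $P_j=\sum_l e^{\kappa_l}h_{llj}$ to obtain $P_j^2\le P\sum_l e^{\kappa_l}h_{llj}^2=(P/\sigma_2^{jj})C_j$, hence $\sigma_2^{jj}P_j^2/P\le C_j$ and therefore
\[
C_j-E_j\;\ge\;-\frac{\sigma_2^{jj}P_j^2}{P\log P}.
\]
The first-order critical condition (\ref{e2.3}) rearranges to $P_j=P\log P\cdot M_j$, where
\[
M_j=(1+\varepsilon)\frac{\kappa_j\langle X,\p_j\rangle}{u}-a\langle\p_j,X\rangle.
\]
Plugging this into the previous display eliminates the $\log P$ factor hidden in $E_j$ and produces the key inequality
\[
\frac{C_j-E_j}{P\log P}\;\ge\;-\sigma_2^{jj}M_j^2.
\]

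Next I would use that the hypersurface is starshaped. Since $u>0$, $a>0$, and $\kappa_j>0$ (forced by $n\kappa_j>\kappa_1>0$), the two summands of $M_j$ share the sign of $\langle X,\p_j\rangle$; the cross term in $M_j^2=(A-B)^2$ is non-negative, so $M_j^2\le A^2+B^2$ with $A=(1+\varepsilon)\kappa_j\langle X,\p_j\rangle/u$ and $B=a\langle X,\p_j\rangle$. Combining with the good term then gives
\[
-\sigma_2^{jj}M_j^2+(1+\varepsilon)\sigma_2^{jj}\frac{\kappa_j^2\langle X,\p_j\rangle^2}{u^2}\;\ge\;-\varepsilon(1+\varepsilon)\sigma_2^{jj}\frac{\kappa_j^2\langle X,\p_j\rangle^2}{u^2}-a^2\sigma_2^{jj}\langle X,\p_j\rangle^2.
\]
This reduces the inequality to absorbing the residual on the right into $(A_j+B_j+D_j)/(P\log P)$.

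Finally I would exploit the hypothesis $n\kappa_j>\kappa_1$, which forces $\kappa_j\ge\kappa_1/n$ and hence $\sigma_1\le n^2\kappa_j$, making $\kappa_j$ comparable to $\kappa_1$. Under this comparability the refined lower bound
\[
A_j\ge e^{\kappa_j}\Bigl[\delta\kappa_j\sigma_2^{jj}\frac{h_{jjj}^2}{\sigma_1^2}-\alpha\sum_{i\ne j}h_{iij}^2\Bigr]
\]
from Lemma \ref{exch}, together with the non-negativity of $B_j$ and $D_j$, produces enough positive quadratic quantity in the $h_{llj}$'s (which in turn govern $M_j^2$ via the critical-point identity) to close the inequality upon choosing $K$ large, $\varepsilon$ small, and $\kappa_1$ sufficiently large. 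The principal obstacle is this last matching step: the residual is quadratic in $\kappa_j$ of the same order as the good term itself, so one must invoke the quantitative comparability $\kappa_j\asymp\kappa_1$ (rather than merely $\kappa_j>\kappa_1/n$) in a careful way, tracking the dependence of constants on $\varepsilon$ and $K$ throughout.
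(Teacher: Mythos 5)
Your plan tracks the paper's proof quite closely: the same $A_j,\ldots,E_j$ bookkeeping, the same Cauchy--Schwarz step giving $C_j\ge\sigma_2^{jj}P_j^2/P$, the same use of (\ref{e2.3}) to replace $P_j/(P\log P)$ by $M_j$, and the same invocation of Lemma~\ref{exch}. The observation that the cross term in $M_j^2=(A-B)^2$ is nonpositive, so $M_j^2\le A^2+B^2$, is a harmless variant of the paper's computation, which keeps the cross term and simply notes that it carries a favorable sign.

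The genuine gap is precisely the step you flag yourself, but you misdiagnose its nature. What is needed is the quantitative bound $\sigma_2^{jj}P_j^2/(P\log P)\le c_j(A_j+B_j+D_j)$, and ``the refined lower bound on $A_j$ from Lemma~\ref{exch} together with the non-negativity of $B_j$ and $D_j$'' does \emph{not} deliver it. Lemma~\ref{exch} only gives $A_j\gtrsim e^{\kappa_j}\kappa_j\sigma_2^{jj}h_{jjj}^2/\sigma_1^2-\alpha e^{\kappa_j}\sum_{l\ne j}h_{llj}^2$, and you must then absorb $\alpha e^{\kappa_j}\sum_{l\ne j}h_{llj}^2$. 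But $B_j=2\sum_{l\ne j}e^{\kappa_l}h_{llj}^2$ can be exponentially smaller than $e^{\kappa_j}\sum_{l\ne j}h_{llj}^2$ whenever some $\kappa_l<\kappa_j$ (and in $\Gamma_2$ such $\kappa_l$ can even be negative), so mere non-negativity of $D_j$ is insufficient. One must prove the pointwise comparison
\[
\sum_{l\neq j}e^{\kappa_l}h_{llj}^2+\sum_{l\neq j}\sigma_2^{ll}\,\frac{e^{\kappa_l}-e^{\kappa_j}}{\kappa_l-\kappa_j}\,h_{llj}^2\;\ge\;e^{\kappa_j}\sum_{l\neq j}h_{llj}^2,
\]
which uses the specific structure of $D_j$: for $\kappa_l<\kappa_j$ one needs $\sigma_2^{ll}/(\kappa_j-\kappa_l)\ge 1$, which holds because $\sigma_2^{ll}=\kappa_j-\kappa_l+\sigma_2^{jj}$ and $\sigma_2^{jj}\ge0$ in $\Gamma_2$. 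The comparability $\kappa_j\asymp\kappa_1$ that you emphasize is a secondary ingredient (used only to convert $1/\log P$ into $C/\sigma_1$ when bounding the $h_{jjj}^2$ part); the $D_j$ comparison above is the essential missing piece. With it in hand, your final absorption --- pitting $\delta_j\sigma_2^{jj}M_j^2$ against $\varepsilon(1+\varepsilon)\sigma_2^{jj}\kappa_j^2\langle X,\p_j\rangle^2/u^2+a^2\sigma_2^{jj}\langle X,\p_j\rangle^2$ for $\varepsilon$ small and $\kappa_j\gtrsim\kappa_1$ large --- does close, as does the paper's slightly cleaner version in which the $\delta_j$ fraction is subtracted first and the cross term is retained.
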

\begin{proof}
By the Schwarz inequality,
$$\sigma_2^{jj}P_j^2=\sigma^{jj}_2(\sum_le^{\kappa_l}h_{llj})^2\leq \sigma_2^{jj}\sum_le^{\kappa_l}\sum_le^{\kappa_l}h_{llj}^2.$$ Hence,
\begin{eqnarray}\label{e3.17}
C_j-\frac{\sigma_2^{jj}P_j^2}{P}\geq  0.
\end{eqnarray}
By Lemma \ref{exch}, for some sufficient large constant $C$,
$$\sigma_2^{jj}\frac{\kappa_jh^2_{jjj}}{\sigma_1^2}\leq C[K(\sigma_2)_j^2-\sum_{p\neq q}h_{ppj}h_{qqj}+\sum_{l\neq j}h_{llj}^2].$$ Thus,
\begin{eqnarray}\label{e3.18}
\frac{\sigma_2^{jj}P_j^2}{P\log P}&=&\frac{\sigma_2^{jj}}{P\log P}(e^{\kappa_j}h_{jjj}+\sum_{l\neq j}e^{\kappa_l}h_{llj})^2\\
&\leq&\frac{C\sigma_2^{jj}}{P\sigma_1}(e^{2\kappa_i}h_{iii}^2+\sum_{l\neq j}e^{2\kappa_l}h_{llj}^2)\nonumber\\
&\leq &C[\sum_{l\neq j}e^{\kappa_l}h_{llj}^2+\frac{\kappa_j\sigma_2^{jj}}{\sigma_1^2}e^{\kappa_j}h_{jjj}^2]\nonumber\\
&\leq&C(A_j+B_j+e^{\kappa_j}\sum_{l\neq j}h_{llj}^2)\nonumber.
\end{eqnarray}
We \textbf{claim} that $$\sum_{l\neq j}e^{\kappa_l}h^2_{llj}+\sum_{l\neq j}\sigma_2^{ll}\frac{e^{\kappa_l}-e^{\kappa_j}}{\kappa_l-\kappa_j}h_{llj}^2\geq e^{\kappa_j}\sum_{l\neq j}h_{llj}^2.$$
To prove the claim, we divide it two cases.

\noindent Case (A): $\kappa_l> \kappa_j$, obviously, $$ e^{\kappa_l}h^2_{llj}+\sigma_2^{ll}\frac{e^{\kappa_l}-e^{\kappa_j}}{\kappa_l-\kappa_j}h_{llj}^2\geq e^{\kappa_j}h_{llj}^2.$$

\noindent  Case (B): $\kappa_l<\kappa_j$, we have $$\frac{\sigma_2^{ll}}{\kappa_j-\kappa_l}=\frac{\kappa_j-\kappa_l+\sigma_2^{jj}}{\kappa_j-\kappa_l}\geq 1.$$  Therefore, $$ e^{\kappa_l}h^2_{llj}+\sigma_2^{ll}\frac{e^{\kappa_l}-e^{\kappa_j}}{\kappa_l-\kappa_j}h_{llj}^2\geq e^{\kappa_l}h_{llj}^2+(e^{\kappa_j}-e^{\kappa_l})h^2_{llj}= e^{\kappa_j}h_{llj}^2.$$ The claim is verified.   Hence, by \eqref{e3.18} and the claim,
$$\frac{\sigma_2^{jj}P_j^2}{P\log P}\leq c_j(A_j+B_j+D_j).$$ Denote $\delta_j=1/c_j$. It follows from \eqref{e3.17} and \eqref{e2.3} that,
\begin{eqnarray}
&&\frac{1}{P\log P}(A_j+B_j+C_j+D_j-E_j)+(1+\varepsilon)\frac{\sigma_2^{jj}h_{jj}^2\langle X,\p_j\rangle^2}{u^2}\nonumber\\
&\geq& (1+\varepsilon)\frac{\sigma_2^{jj}h_{jj}^2\langle X,\p_j\rangle^2}{u^2}-\frac{1-\delta_j}{(P\log P)^2}\sigma_2^{jj}P_j^2\nonumber\\
&=&(1+\varepsilon)[(1-(1-\delta_j)(1+\varepsilon))\frac{\sigma_2^{jj}h_{jj}^2\langle X,\p_j\rangle^2}{u^2}+2(1-\delta_j)\frac{a\sigma_2^{jj}h_{jj}\langle X,\p_j\rangle^2}{u}]\nonumber\\&&-(1-\delta_j)a^2\sigma_2^{jj}\langle X,\p_j\rangle^2.\nonumber
\end{eqnarray}
The above is nonnegative, if $\kappa_1$ sufficiently large, and $\varepsilon$
is small enough.
\end{proof}

We are in the position to give $C^2$ estimate. We use a similar argument in the previous section. We need to deal with every index in \eqref{e2.8}.  First, we note that $n\kappa_1>\kappa_1$. By Lemma \ref{elem 11},
\begin{eqnarray}\label{e3.19}
\frac{1}{P\log P}(A_1+B_1+C_1+D_1-E_1)+(1+\varepsilon)\frac{\sigma_2^{11}h_{11}^2\langle X,\p_1\rangle^2}{u^2}\geq 0.
\end{eqnarray}
We divide into two different cases.

\par
\noindent Case (A):  Suppose $n\kappa_2\leq \kappa_1$. In this case, we use Lemma \ref{elem 9}. For $i\geq 2$, note that $A_j\geq 0$,
\begin{eqnarray}\label{e3.20}
\frac{1}{P\log P}(A_i+B_i+C_i+D_i-E_i)\geq 0.
\end{eqnarray}
Combine \eqref{e3.19}, \eqref{e3.20} and \eqref{e2.8},
$$\sigma_2^{ii}\phi_{ii}\geq -C+(a-C)\kappa_1.$$  We obtain $C^2$ estimate if $a$ is sufficiently large.
\par
\noindent Case (B): Suppose $n\kappa_2> \kappa_1$. We assume that index $i_0$ satisfies  $n\kappa_{i_0}>\kappa_1$ and $n\kappa_{i_0+1}\leq \kappa_1$. Hence, for index $j\leq i_0$, $n\kappa_j>\kappa_1$. Lemma \ref{elem 11} implies,
\begin{eqnarray}\label{e3.21}
\frac{1}{P\log P}(A_j+B_j+C_j+D_j-E_j)+(1+\varepsilon)\frac{\sigma_2^{jj}h_{jj}^2\langle X,\p_j\rangle^2}{u^2}\geq 0.
\end{eqnarray}
For index $j\geq i_0+1$, by Lemma \ref{elem 10},
\begin{eqnarray}\label{e3.22}
&&\frac{1}{P\log P}(A_j+B_j+C_j+D_j-E_j)+(1+\varepsilon)\frac{\sigma_2^{jj}h_{jj}^2\langle X,\p_j\rangle^2}{u^2}\\
&\geq&-(1-\frac{2}{n-1})\frac{\sigma_2^{jj}P_j^2}{(P\log P)^2}+(1+\varepsilon)\frac{\sigma_2^{jj}h_{jj}^2\langle X,\p_j\rangle^2}{u^2}\nonumber\\
&=&(1+\varepsilon)[(1-\frac{n-3}{n-1}(1+\varepsilon))\frac{\sigma_2^{jj}h_{jj}^2\langle X,\p_j\rangle^2}{u^2}+2\frac{n-3}{n-1}\frac{a\sigma_2^{jj}h_{jj}\langle X,\p_j\rangle^2}{u}]\nonumber\\&&-\frac{n-3}{n-1}a^2\sigma_2^{jj}\langle X,\p_j\rangle^2.\nonumber\\
&\geq &-Ca^2\kappa_1\nonumber.
\end{eqnarray}
The last inequality holds, provided $\varepsilon$ is sufficiently small. Combining \eqref{e3.21}, \eqref{e3.22} and \eqref{e2.8}, we obtain,
$$\sigma_2^{ii}\phi_{ii}\geq -C+(a-C)\kappa_1+\varepsilon\sigma_2^{ii}\kappa_i^2-Ca^2\kappa_1.$$
We further divide the case into two subcases to deal with the above inequality.
\par
\noindent  Case (B1):  Suppose  $\sigma_2^{22}\geq 1$. As $n\kappa_2>\kappa_1$,
\begin{eqnarray}
\sigma_2^{ii}\phi_{ii}&\geq& -C+(a-C)\kappa_1+\varepsilon \sigma_2^{22}\kappa_2^2-Ca^2\kappa_1\nonumber\\
&\geq&-C+(a-C)\kappa_1+\frac{\varepsilon}{n^2} \kappa_1^2-Ca^2\kappa_1\nonumber.
\end{eqnarray}
The above is nonnegative if $\kappa_1$ and $a$ are sufficiently large.
\par
\noindent Case (B2):  Suppose $\sigma_2^{22}< 1$. In this subcase, we may assume that $\kappa_1$ is sufficiently large, then $\kappa_n<0$.  By the assumption,
$1\geq \kappa_1+(n-2)\kappa_n$. This implies,
$$-\kappa_n\geq\frac{\kappa_1-1}{n-2}.$$  Since $\sigma_2^{nn}+\kappa_n= \kappa_1+\sigma_2^{11}$, we have $\sigma^{nn}_2\geq \kappa_1$. Hence,
\begin{eqnarray}
\sigma_2^{ii}\phi_{ii}&\geq& -C+(a-C)\kappa_1+\varepsilon \sigma_2^{nn}\kappa_n^2-Ca^2\kappa_1\nonumber\\
&\geq&-C+(a-C)\kappa_1+\frac{\varepsilon}{(n-2)^2} \kappa_1(\kappa_1-1)^2-Ca^2\kappa_1\nonumber.
\end{eqnarray}
The above is nonnegative, if $a$ and $\kappa_1$ are sufficiently large. The proof of Theorem \ref{theo2 11-0} is complete.

\medskip

We remark that the similar curvature estimate can be established for Dirichlet boundary problem of equation
\begin{equation}\label{2.1-0}
\left\{\begin{matrix}\sigma_2[\kappa(x,u(x))]&=&f(x,u,Du), \\
 u|_{\partial \Omega}&=&\phi,\end{matrix}\right.
\end{equation}
where $\Omega\subset \mathbb R^n$ is a bounded domain. Though such graph over
$\Omega$ may not be starshaped. With the assumption of $C^1$ boundedness, one may shift the origin in $\mathbb R^{n+1}$ in the direction of $E_{n+1}=(0,\cdots,0, 1)$ in appropriate way so that the surface is starshaped with respect to the new origin. Then the proof in this section yields the following theorem, which completely settled the regularity problem considered in Ivochkina \cite{I, I1} when $k=2$.

\begin{theo}\label{theo2 11-0000}
Suppose $u$ is a solution of equation (\ref{2.1-0}), then there is a constant $C$ depending only on $n, k$, $\Omega$, $\|u\|_{C^1}$, $\inf f$ and $\|f\|_{C^2}$, such that
 \begin{equation}\label{M2c2-0}
 \max_{x\in \Omega} |\nabla^2 u(x)| \le C(1+\max_{x\in\partial \Omega} |\nabla^2 u(x)|), \ \ \forall i=1,\cdots, n.\end{equation}
\end{theo}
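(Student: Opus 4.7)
The strategy is to reduce Theorem \ref{theo2 11-0000} to the closed (starshaped) result Theorem \ref{theo2 11-0} by an origin shift, and then convert back from principal curvatures to the Hessian of $u$.

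\textbf{Step 1: Origin shift to recover starshapedness.} Given a $2$-admissible solution $u$ of (\ref{2.1-0}), regard it as defining the graph
\[
M = \{(x,u(x)) : x\in\bar\Omega\}\subset\mathbb R^{n+1},
\]
with upward unit normal $\nu=(-Du,1)/\sqrt{1+|Du|^2}$. Choose $T>0$ depending only on $\|u\|_{C^1}$ and $\mathrm{diam}(\Omega)$ so large that the translated hypersurface $\tilde M := M+TE_{n+1}$ has positive support function
\[
\tilde u(X) = \langle X,\nu\rangle = \frac{u(x) - x\cdot Du(x) + T}{\sqrt{1+|Du|^2}}>c_0>0.
\]
Translation preserves principal curvatures and the outward normal, so $\tilde M$ is still $2$-convex and satisfies $\sigma_2(\kappa)=\tilde f(X,\nu)$, where $\tilde f(X,\nu)=f(x,u,Du)$ with $x$ recovered from $X$; the $C^2$ norm of $\tilde f$ is controlled by that of $f$ and by $\|u\|_{C^1}$.

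\textbf{Step 2: Run the maximum principle of Section 3 on $\tilde M$.} Apply the argument leading to Theorem \ref{theo2 11-0} with the same test function
\[
\phi = \log\log P - (1+\varepsilon)\log\tilde u + \tfrac{a}{2}|X|^2,\qquad P=\sum_l e^{\kappa_l},
\]
on the hypersurface with boundary $\tilde M$. The derivation of (\ref{e2.3})--(\ref{e3.22}) and of the two concluding subcases (B1) and (B2) is entirely pointwise: it uses only $2$-convexity, the equation, and $\tilde u>0$, none of which rely on compactness without boundary. Therefore, at any interior maximum point of $\phi$ on $\tilde M$, the same chain of inequalities forces an a priori upper bound on $\kappa_1$. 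If instead $\phi$ attains its maximum at a point above $\partial\Omega$, then $\kappa_1$ is controlled at that point by $\max_{X\in\partial\tilde M,\,i}\kappa_i(X)$. In either case one obtains
\[
\max_{X\in\tilde M,\,i}\kappa_i(X)\le C\Bigl(1+\max_{X\in\partial\tilde M,\,i}\kappa_i(X)\Bigr),
\]
which is the graph analogue of (\ref{M2c2-00}).

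\textbf{Step 3: Passage from curvatures to $\nabla^2 u$.} Since $|Du|$ is bounded, the principal curvatures $\kappa_i$ of the graph are comparable (up to factors depending on $\|Du\|_{L^\infty}$) to the eigenvalues $\lambda_i$ of $\nabla^2 u$. An upper bound on $\max_i\kappa_i$ therefore yields an upper bound on $\lambda_{\max}$. Combined with $\lambda\in\Gamma_2$ (so $\sigma_1(\lambda)\ge0$ and $\sigma_2(\lambda)=f>0$), a routine Garding-cone argument then yields $|\lambda_i|\le C\lambda_{\max}+C$ for every $i$, producing (\ref{M2c2-0}).

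\textbf{Main obstacle.} The only delicate point is the bookkeeping in Step 1, namely verifying that all constants produced by Theorem \ref{theo2 11-0} applied to $\tilde M$ (in particular $\inf\tilde u$, $\|\tilde M\|_{C^1}$, $\|\tilde f\|_{C^2}$) are controlled purely by $n,k,\Omega,\|u\|_{C^1},\inf f,\|f\|_{C^2}$, independently of any extension of $u$ beyond $\Omega$. Everything else — in particular the interior maximum-principle computation — is a direct transcription of Section 3, because the algebraic lemmas of that section (Lemma \ref{exch}, Lemma \ref{elem 9}, Lemma \ref{elem 10}, Lemma \ref{elem 11}) are pointwise identities in the principal curvatures and care nothing about global topology.
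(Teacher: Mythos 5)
Your proposal is correct and follows essentially the same approach as the paper: the paper itself reduces Theorem \ref{theo2 11-0000} to the computation of Section 3 via exactly the origin shift in the $E_{n+1}$-direction that you describe, noting that this makes the support function positive so the maximum-principle argument for Theorem \ref{theo2 11-0} goes through pointwise, with boundary maxima contributing the $\max_{\partial\Omega}|\nabla^2 u|$ term. Your Step 3 (converting the bound on $\max_i\kappa_i$ to a bound on $|\nabla^2 u|$ via the bounded gradient and the $\Gamma_2$-cone inequality $\lambda_n\ge -(n-1)\lambda_1$) is a standard point the paper leaves implicit, and your bookkeeping in Step 1 on $\|\tilde f\|_{C^2}$ is the right thing to verify.
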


\section{A global $C^2$ estimate for convex hypersurfaces}
\par
In this section, we consider the global $C^2$ estimates for convex solutions to curvature equation (\ref{1.1}) in $\mathbb{R}^{n+1}$. We need further modify the test function constructed in the previous section.

\begin{theo}\label{theo 11-0}
Suppose $M\subset \mathbb R^{n+1}$ is a convex hypersurface satisfying curvature equation (\ref{1.1}) for some positive function $f(X, \nu)\in C^{2}(\Gamma)$, where $\Gamma$ is an open neighborhood of unit normal bundle of $M$ in $\mathbb R^{n+1} \times \mathbb S^n$, then there is a constant $C$ depending only on $n, k$, $\|M\|_{C^1}$, $\inf f$ and $\|f\|_{C^2}$, such that
 \begin{equation}\label{Mc2-0}
 \max_{X\in M, i=1,\cdots, n} \kappa_i(X) \le C(1+\max_{X\in \partial M, i=1,\cdots, n} \kappa_i(X)).\end{equation}
\end{theo}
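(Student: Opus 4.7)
The plan is to adapt the argument of Section~3 to the general $k$-case, using convexity of $M$ in a decisive way at the end. We keep a test function of the same type, namely
\[
\phi \;=\; \log\log P \;-\; (1+\varepsilon)\log u \;+\; \frac{a}{2}|X|^2,
\qquad P \;=\; \sum_{l} e^{\kappa_l},
\]
with $\varepsilon$ small and $a$ large (both to be fixed). Here $u=\langle X,\nu\rangle$ is the support function, which is bounded below by the enclosed-ball hypothesis (or controlled by the $C^{1}$-norm in the boundary version). Because $M$ is convex, $\kappa_i\ge 0$ throughout, so $P$ is essentially $e^{\kappa_1}$ for large $\kappa_1$ and the test function behaves like $\log \kappa_1$ modulo lower order. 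If the interior maximum of $\phi$ is at $X_0$, diagonalize $(h_{ij})$ there with $\kappa_1\ge\cdots\ge\kappa_n\ge 0$; as in (\ref{e2.3})--(\ref{e2.4}) we extract the first-order relation and the second-order inequality $\sigma_k^{ii}\phi_{ii}\le 0$.

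Next I would differentiate the equation $\sigma_k(\kappa)=f(X,\nu)$ once and twice along each $\partial_i$, mirroring (\ref{e2.6})--(\ref{e2.7}) but with $\sigma_2$ replaced by $\sigma_k$. Substituting into $\sigma_k^{ii}\phi_{ii}$ and using the Gauss and commutation formulas produces three groups of terms: the curvature-squared contributions $\sum_l e^{\kappa_l}(h_{ii}h_{ll}-h_{il}^2)(h_{ll}-h_{ii})$ from the fourth-order commutator, the concavity/third-derivative pieces $-\sigma_k^{pq,rs}h_{pqi}h_{rsi}$, and the off-diagonal second-derivative terms from (\ref{1.0}). I would now apply Lemma~\ref{lemma 1} with $(k,l,h)$ specialized to $(k,k-1,i)$, together with the off-diagonal identity from (\ref{1.0}), to control the "bad" third-derivative group by the gradient-squared term $\sigma_k^{ii}P_i^2/(P\log P)^2$ plus a nonnegative remainder (this is the analogue of the role Lemma~\ref{exch} played in the $k=2$ case, and for general $k$ one needs to redo that computation cleanly).

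The combinatorial core is then a case analysis on the size of $\kappa_i$ relative to $\kappa_1$, directly paralleling Lemmas~\ref{elem 9}--\ref{elem 11}. For indices $j$ with $n\kappa_j\le\kappa_1$ one shows that the diagonal and off-diagonal third-derivative contributions beat the gradient-squared term because $\sigma_k^{jj}\le 2(n-1)\kappa_1/k$ is large compared to $\log P\sim\kappa_1$ after dividing by $P\log P$; for indices $j$ with $n\kappa_j>\kappa_1$ one uses the $P$-summed Cauchy--Schwarz trick of Lemma~\ref{elem 11} so that the $\varepsilon$-term $(1+\varepsilon)\sigma_k^{jj}h_{jj}^2\langle X,\partial_j\rangle^2/u^{2}$ absorbs what is left after subtracting $\sigma_k^{jj}P_j^2/P$. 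The outcome is the clean lower bound $\sigma_k^{ii}\phi_{ii}\ge -C+(a-C)\kappa_1+\varepsilon\,\sigma_k^{ii}\kappa_i^2$.

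Convexity then finishes the argument cleanly: unlike the $k=2$ case (subcase (B2)) there is no need to exploit negativity of $\kappa_n$, because $\kappa_i\ge 0$ together with the Newton--MacLaurin inequality gives $\sigma_k^{ii}\kappa_i^2\ge c(n,k)\sigma_k\kappa_1\ge c\kappa_1$ directly; choosing $a$ (and then $\varepsilon$) appropriately turns the right-hand side positive, contradicting $\sigma_k^{ii}\phi_{ii}\le 0$ unless $\kappa_1$ is a priori bounded. The main obstacle I expect is in the "many large $\kappa_j$" regime of the case analysis: for $k\ge 3$ the coefficient $(\alpha+1-\delta\alpha)$ in Lemma~\ref{lemma 1} and the corresponding lower bound on $\sigma_k^{jj}$ are more delicate than for $k=2$, and one must choose $K,\alpha,\delta,\varepsilon,a$ in a compatible way so that every one of Lemmas~\ref{elem 9}--\ref{elem 11}'s analogues holds simultaneously; verifying this consistency, together with showing that the particular double-exponential test function $\log\log P$ is convex enough to swallow the $f$-dependent first-derivative corrections $d_\nu f(\partial_l)$, is the technical heart of the argument.
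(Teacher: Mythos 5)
Your proposal takes a genuinely different route from the paper's: the paper abandons the exponential test function of Section~3 altogether and switches to the quadratic $P(\kappa)=\kappa_1^2+\cdots+\kappa_n^2=\sigma_1^2-2\sigma_2$ with $\phi=\log P-2N\log u$, and its case structure is not an analogue of Lemmas~\ref{elem 9}--\ref{elem 11} but a different dichotomy (Lemma~\ref{lemma 2}, Lemma~\ref{lemma 3}, Corollary~\ref{cor4}) controlled by an inductively constructed sequence $\delta_1,\dots,\delta_k$ that tracks how many eigenvalues are comparable to $\kappa_1$. At the very end the convexity is used exactly as you anticipate: if $\kappa_k\ge\delta_k\kappa_1$, then $\sigma_k\ge\delta_k^k\kappa_1^k$ is a priori bounded, which pins down $\kappa_1$; and if not, the estimates of Corollary~\ref{cor4} apply and the term $N\sigma_k^{ii}h_{ii}^2\ge \varepsilon N\sigma_k\kappa_1$ dominates. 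So your reading of how convexity closes the argument is correct.

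However, there is a concrete gap in the way you propose to handle the third-derivative and gradient-squared terms. You write, in the regime $n\kappa_j\le\kappa_1$, that ``$\sigma_k^{jj}\le 2(n-1)\kappa_1/k$ is large compared to $\log P\sim\kappa_1$.'' For $k=2$ this kind of bound is what powers Lemma~\ref{elem 9}: $\sigma_2^{jj}$ is a degree-one polynomial in the $\kappa_i$'s and in the ordering used there is genuinely $\le 2\kappa_1$, so one can win by comparing against $\log P\sim\kappa_1$. For $k\ge 3$ this fails: $\sigma_k^{jj}=\sigma_{k-1}(\kappa|j)$ is degree $k-1$, and in the relevant regime (many eigenvalues comparable to $\kappa_1$, one eigenvalue $\kappa_j$ small) it is of size $\sim\kappa_1^{k-1}$, far larger than $\kappa_1\sim\log P$. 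Consequently the balance in \eqref{e3.16} between $(2-\sigma_k^{jj}/\log P)e^{\kappa_l}$ and the other terms, which is the engine of Lemmas~\ref{elem 9}--\ref{elem 10}, has the wrong sign for $k\ge 3$. Thus the step you identify as ``the technical heart'' is not merely delicate to verify; as stated, the analogue of Lemma~\ref{elem 9} is false. The paper sidesteps this precisely because with $P=\sum\kappa_i^2$ the $E_i$-term becomes $\frac{2\sigma_k^{ii}}{P^2}(\sum_j\kappa_j h_{jji})^2$, whose comparison to $A_i,B_i,C_i,D_i$ (see \eqref{3.10}--\eqref{3.12}) is purely algebraic in the $\kappa$'s and does not require $\sigma_k^{jj}\lesssim\kappa_1$; instead the induction on $\delta_i$ in Corollary~\ref{cor4} organizes the cases so that when some $\kappa_i$ is small relative to $\kappa_1$, the favorable terms $\sigma_{k-1}(\kappa|j)-2\sigma_{k-1}(\kappa|ij)$ in \eqref{3.31}--\eqref{3.33} can be made nonnegative. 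To salvage your approach you would at minimum need a completely different replacement for Lemma~\ref{elem 9}, and it is not clear one exists with the double-exponential test function for $k\ge 3$.
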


\par
To precede, consider the following test function,
\begin{eqnarray}
P(\kappa(X))= \kappa^2_1+\cdots+\kappa_n^2, \quad \phi=\log P(\kappa(X))-2N \log u,
\end{eqnarray}
where $N$ is a constant to be determined later. Note that,
\[\kappa^2_1+\cdots+\kappa_n^2= \sigma_1(\kappa(X))^2-2\sigma_2(\kappa(X)).\]We assume that $\phi$ achieves its maximum value at $x_0\in M$. By a proper
rotation, we may assume that $(h_{ij})$ is a diagonal matrix at the
point, and $h_{11}\geq h_{22}\cdots\geq h_{nn}$.
\par
At $x_0$, differentiate $\phi$ twice,
\begin{eqnarray}\label{3}
\phi_i&=& \frac{\sum_k\kappa_kh_{kki}}{P}-N\frac{u_i}{u}\\
&=&\frac{\sum_k\kappa_kh_{kki}}{P}-N\frac{h_{ii}\langle \p_i,X \rangle}{u}\ \  =   \  \ 0,\nonumber
\end{eqnarray}
and,
\begin{eqnarray}\label{4}
0&\geq &\frac{1}{P}[\sum_k\kappa_kh_{kk,ii}+\sum_{k}h_{kki}^2+\sum_{p\neq q}h_{pqi}^2]-\frac{2}{P^2}[\sum_k\kappa_kh_{kki}]^2\\
&&-N\frac{u_{ii}}{u} +N\frac{u_i^2}{u^2}\nonumber\\
&=&\frac{1}{P}[\sum_k\kappa_k(h_{ii,kk}+(h_{ik}^2-h_{ii}h_{kk})h_{ii}+(h_{ii}h_{kk}-h_{ik}^2)h_{kk})\nonumber\\
&&+\sum_{k}h_{kki}^2+\sum_{p\neq q}h_{pqi}^2]-\frac{2}{P^2}[\sum_k\kappa_kh_{kki}]^2-N\frac{\sum_lh_{ii,l}\langle X,\p_l\rangle}{u}\nonumber\\
&&-N\frac{h_{ii}}{u} +Nh_{ii}^2+N\frac{h_{ii}^2\langle \p_i,X\rangle^2}{u^2}.\nonumber
\end{eqnarray}
\par
Now differentiate equation (\ref{1.1}) twice,
\begin{eqnarray}\label{6}
\sigma_k^{ii}h_{iij}&=& d_Xf(X_j) + d_{\nu} f( \nu_j)\ \ = \ \  d_X f(\p_j)+h_{jj}d_{\nu}f(\p_j),
\end{eqnarray}
\begin{eqnarray}\label{7}
&&\sigma_k^{ii}h_{iijj}+\sigma_k^{pq,rs}h_{pqj}h_{rsj}\\
&=&d_Xf(X_{jj})+d^2_{X}f(X_j,X_j)+2d_Xd_{\nu}f(X_j,\nu_j)
+d^2_{\nu}f(\nu_j,\nu_j)+d_{\nu}f(\nu_{jj}).\nonumber\\
&=&-h_{jj}d_Xf(\nu)+d^2_{X}f(\p_j,\p_j)+2h_{jj}d_Xd_{\nu}f(\p_j,\p_j)
+h_{jj}^2d^2_{\nu}f(\p_j,\p_j)\nonumber\\&&+\sum_kh_{kjj}d_{\nu}f(\p_k)-h_{jj}^2d_{\nu}f(\nu)\nonumber\\
&\geq&-C-C\kappa_j-C\kappa_j^2+\sum_kh_{kjj}d_{\nu}f(\p_k)\nonumber\\
&\geq&-C-C\kappa_j^2+\sum_kh_{kjj}d_{\nu}f(\p_k)\nonumber.
\end{eqnarray}
The Schwarz inequality is used in the last inequality.

Since
$$
-\sigma_k^{pq,rs}h_{pql}h_{rsl}\ \ =\  \ -\sigma_k^{pp,qq}h_{ppl}h_{qql}+\sigma_k^{pp,qq}h_{pql}^2,
$$
it follows from (\ref{3}) and (\ref{6}),
\begin{eqnarray}
\frac{1}{P}\sum_{l,s}\kappa_lh_{sll}d_{\nu}f(\p_s)-\frac{N\sigma_k^{ii}\sum_sh_{iis}\langle \p_s, X\rangle }{u}
& =& -\frac{N}{u}\sum_sd_Xf(\p_s)\langle \p_s,X\rangle.
\end{eqnarray}

Denote
\begin{eqnarray}
&& A_i= \frac{\kappa_i}{P}(K(\sigma_k)_i^2-\sum_{p,q}\sigma_k^{pp,qq}h_{ppi}h_{qqi}), \ \  B_{i}=2\sum_j\frac{\kappa_j}{P}\sigma_k^{jj,ii}h_{jji}^2, \nonumber \\
&& C_i=2\sum_{j\neq i}\frac{\sigma_k^{jj}}{P}h_{jji}^2, \ \
 D_i=\frac{1}{P}\sum_j\sigma_k^{ii}h_{jji}^2,\  \
E_i=\frac{2\sigma_k^{ii}}{P^2}(\sum_j \kappa_jh_{jji})^2.\nonumber
\end{eqnarray}

Contract with $\sigma_k^{ii}$ in both side of inequality (\ref{4}), by (\ref{6}) and  (\ref{7}),
\begin{eqnarray}\label{3.8} 0&\geq&\frac{1}{P}[\sum_l\kappa_l(-C-C\kappa_{l}^2+\sum_sh_{sll}d_{\nu}f(\p_s)-K(\sigma_k)_l^2+K(\sigma_k)_l^2-\sigma_k^{pq,rs}h_{pql}h_{rsl})\\
&&+\sigma_k^{ii}h_{ii}\sum_l\kappa_l^3-\sigma_k^{ii}h_{ii}^2\sum_l\kappa_l^2+\sum_l\sigma_k^{ii}h_{lli}^2+\sigma_k^{ii}\sum_{p,q}h_{pqi}^2]
-\frac{2\sigma_k^{ii}}{P^2}(\sum_j \kappa_jh_{jji})^2\nonumber\\
&&-N\frac{\sum_l\sigma_k^{ii}h_{ii,l}\langle X,\p_l\rangle}{u}
-N\frac{\sigma_k^{ii}h_{ii}}{u} +N\sigma_k^{ii}h_{ii}^2+N\frac{\sigma_k^{ii}h_{ii}^2\langle \p_i,X\rangle^2}{u^2}\nonumber\\
&\geq&\frac{1}{P}[\sum_l\kappa_l(-C-C\kappa_{l}^2-K(\sigma_k)_l^2+K(\sigma_k)_l^2
-\sigma_k^{pp,qq}h_{ppl}h_{qql}+\sigma_k^{pp,qq}h_{pql}^2)\nonumber\\
&&+kf\sum_l\kappa_l^3-\sigma_k^{ii}h_{ii}^2\sum_l\kappa_l^2+\sum_l\sigma_k^{ii}h_{lli}^2+\sigma_k^{ii}\sum_{p,q}h_{pqi}^2]
-\frac{2\sigma_k^{ii}}{P^2}(\sum_j \kappa_jh_{jji})^2
\nonumber\\&&-N\frac{kf}{u}+N\sigma_k^{ii}h_{ii}^2+N\frac{\sigma_k^{ii}h_{ii}^2\langle
\p_i,X\rangle^2}{u^2}-\frac{N}{u}\sum_sd_Xf(\p_s)\langle
\p_s,X\rangle\nonumber\\
&\ge& \frac{1}{P}[\sum_l\kappa_l(-C-C\kappa_{l}^2+\sum_sh_{sll}d_{\nu}f(\p_s)-K(\sigma_k)_l^2)
+\sigma_k^{ii}h_{ii}\sum_l\kappa_l^3-\sigma_k^{ii}h_{ii}^2\sum_l\kappa_l^2]\nonumber\\
&&-N\frac{kf}{u}+N\sigma_k^{ii}h_{ii}^2+N\frac{\sigma_k^{ii}h_{ii}^2\langle
\p_i,X\rangle^2}{u^2}-\frac{N}{u}\sum_sd_Xf(\p_s)\langle
\p_s,X\rangle\nonumber\\
&& +\sum_{i} (A_i+B_i+C_i+D_i-E_i).\nonumber
\end{eqnarray}

The main part of the proof is to deal with the third order derivatives.
We divide it to two cases:
\begin{enumerate} \item $i\neq 1$; \item $i=1$.\end{enumerate}
\par

\begin{lemm} \label{lemma 2}
For $i\neq 1$,   if  $$\sqrt{3}\kappa_i\ \ \leq \ \ \kappa_1,$$ we have,
$$A_i+B_i+C_i+D_i-E_i\ \ \geq \ \  0.$$
\end{lemm}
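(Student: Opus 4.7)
The plan is to verify that each of $A_i$, $B_i$, $C_i$, $D_i$ is individually nonnegative on the convex cone $\Gamma_k$, and then to absorb the single negative quantity $-E_i$ via a weighted Cauchy--Schwarz tailored to the hypothesis $\sqrt{3}\,\kappa_i \leq \kappa_1$.

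First I would dispatch the nonnegativities. Since $M$ is convex, $\kappa_j \geq 0$ for every $j$, and on $\Gamma_k$ the classical identities $\sigma_k^{jj} = \sigma_{k-1}(\kappa\mid j)$ and $\sigma_k^{jj,ii} = \sigma_{k-2}(\kappa\mid i,j)$ (with $\sigma_k^{ii,ii}=0$) are nonnegative, so $B_i, C_i, D_i \geq 0$ by inspection. For $A_i$ I invoke Lemma~\ref{lemma 1} with $l=0$ (equivalently, the concavity of $\sigma_k^{1/k}$) to get
\[
-\sum_{p,q}\sigma_k^{pp,qq}h_{ppi}h_{qqi} \geq -\frac{k-1}{k\,\sigma_k}(\sigma_k)_i^2.
\]
Since $\sigma_k = f \geq \inf f > 0$, fixing $K$ large enough makes the bracket in $A_i$ nonnegative, and combined with $\kappa_i \geq 0$ gives $A_i \geq 0$.

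The substantive content is the bound $E_i \leq A_i + B_i + C_i + D_i$. A weighted Cauchy--Schwarz
\[
\Bigl(\sum_j \kappa_j h_{jji}\Bigr)^2 \leq \Bigl(\sum_j \frac{\kappa_j^2}{w_j}\Bigr)\Bigl(\sum_j w_j h_{jji}^2\Bigr),
\]
with the natural choice $w_i = \sigma_k^{ii}$ and $w_j = \sigma_k^{ii} + 2\sigma_k^{jj}$ for $j \neq i$, makes $\sum_j w_j h_{jji}^2 = P(C_i + D_i)$, reducing the lemma to a scalar inequality of the shape
\[
\frac{2\sigma_k^{ii}}{P}\sum_j \frac{\kappa_j^2}{w_j} \leq 1 .
\]
For $j=i$, the hypothesis $3\kappa_i^2 \leq \kappa_1^2 \leq P$ controls the term $\kappa_i^2/\sigma_k^{ii}$; and for $j>i$ the Newton-type identity $\sigma_k^{ii}-\sigma_k^{jj} = (\kappa_j - \kappa_i)\sigma_{k-2}(\kappa\mid i,j)$ together with $\kappa_j \leq \kappa_i$ gives $\sigma_k^{jj} \geq \sigma_k^{ii}$, hence $w_j \geq 3\sigma_k^{ii}$, supplying the needed factor.

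The main obstacle is the block of indices $1 \leq j < i$, where $\kappa_j \geq \kappa_i$ forces $\sigma_k^{jj} \leq \sigma_k^{ii}$ and the weight $w_j$ no longer dominates. Here I would harvest the reservoir inside $A_i$ coming from the diagonal term $K\kappa_i(\sigma_k^{pp})^2 h_{ppi}^2/P$ of $K(\sigma_k)_i^2 = K\bigl(\sum_p \sigma_k^{pp}h_{ppi}\bigr)^2$, together with the $B_i$ contribution $2\kappa_j\sigma_k^{jj,ii}h_{jji}^2/P$ coming from genuine mixed-partial convexity on $\Gamma_k$. The hypothesis $\sqrt{3}\kappa_i \leq \kappa_1$ is decisive: it forces $\kappa_1$ to be the only index that can absorb significant mass, so the accumulated defect from the $j<i$ block is controllable and fits inside these reservoirs when $K$ is fixed once and for all, depending only on $n$, $k$, and $\inf f$.
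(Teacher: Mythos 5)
Your decomposition idea (show $A_i\geq 0$ and then dominate $E_i$ by a weighted Cauchy--Schwarz) is close to a legitimate alternative proof, but the weights you choose leave a real gap. With $w_j=\sigma_k^{ii}+2\sigma_k^{jj}$ you only capture $\sum_j w_j h_{jji}^2 = P(C_i+D_i)$, and the reduced target $\frac{2\sigma_k^{ii}}{P}\sum_j\kappa_j^2/w_j\leq 1$ is simply false. Take $k=2$, $i=2$, $\kappa_2=\kappa_1/\sqrt{3}$, all other $\kappa_m=0$: then $P=\tfrac{4}{3}\kappa_1^2$, $\sigma_2^{22}=\kappa_1$, $\sigma_2^{11}=\kappa_2$, and one computes $\frac{2\sigma_2^{22}}{P}\sum_j\kappa_j^2/w_j=\tfrac{3}{2}\bigl(\tfrac{1}{1+2/\sqrt{3}}+\tfrac{1}{3}\bigr)\approx 1.2>1$. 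The obstruction is exactly your $j<i$ block ($j=1$ here), so the scalar inequality you reduce to cannot hold with those weights. Your proposed rescue---\emph{harvesting a reservoir from $A_i$}---does not work as described: $K(\sigma_k)_i^2=K(\sum_p\sigma_k^{pp}h_{ppi})^2$ is not bounded below by its diagonal $K\sum_p(\sigma_k^{pp})^2h_{ppi}^2$ (the cross terms have no sign), and in any case the full $K(\sigma_k)_i^2$ is already consumed in making $A_i\geq 0$ against the negative convexity term, so there is no spare positivity unless you prove a strengthened, quantitative version of Lemma~\ref{lemma 1}.

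The fix is to put $B_i$ into the weight: with $w_j=2\kappa_j\sigma_k^{jj,ii}+2\sigma_k^{jj}+\sigma_k^{ii}$ for $j\neq i$ (so $\sum_j w_j h_{jji}^2=P(B_i+C_i+D_i)$), the identity $\kappa_j\sigma_k^{jj,ii}=\sigma_k^{ii}-\sigma_{k-1}(\kappa|ij)$ gives $w_j=3\sigma_k^{ii}+2\kappa_i\sigma_k^{jj,ii}\geq 3\sigma_k^{ii}$ for every $j\neq i$ (convexity makes $\sigma_k^{jj,ii}\geq 0$), and then $\frac{2\sigma_k^{ii}}{P}\sum_j\kappa_j^2/w_j\leq\frac{2}{3}\bigl(1+2\kappa_i^2/P\bigr)\leq 1$ precisely because $P\geq\kappa_1^2+\kappa_i^2\geq 4\kappa_i^2$ under $\sqrt{3}\kappa_i\leq\kappa_1$. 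That repaired argument, together with your (correct) proof that $A_i\geq 0$, proves the lemma, and it is genuinely different from the paper's route. The paper instead expands $P^2(B_i+C_i+D_i-E_i)$ directly, Cauchy--Schwarz-es away the cross terms with both indices $\neq i$, and is left with a $2\times 2$ quadratic form in $(h_{11i},h_{iii})$ whose discriminant condition $3(\kappa_i/\kappa_1)^4+2(\kappa_i/\kappa_1)^2-1\leq 0$ is exactly $\sqrt{3}\kappa_i\leq\kappa_1$; the Cauchy--Schwarz-with-weights route is shorter but, as submitted, your version of it does not close.
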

\begin{proof}
\par
By (\ref{1.5}) in Lemma \ref{lemma 1} and note that $\sigma_1^{pp,qq}=0$,  when $K$ is  sufficiently large,
\begin{eqnarray}\label{10}
K(\sigma_k)_i^2-\sigma_k^{pp,qq}h_{ppi}h_{qqi}&\geq&\sigma_k(1+\frac{\al}{2})[\frac{(\sigma_1)_i}{\sigma_1}]^2\ \ > \  \  0.
\end{eqnarray}

\begin{eqnarray}\label{3.10}
&&P^2(B_i+C_i+D_i-E_i)\\
&=&\sum_{j\neq i}P(2\kappa_j\sigma_k^{jj,ii}+2\sigma_k^{jj}+\sigma_k^{ii})h_{jji}^2+P\sigma_k^{ii}h_{iii}^2
-2\sigma_k^{ii}(\sum_{j\neq i}\kappa_j^2h_{jji}^2+\kappa_i^2h_{iii}^2\nonumber\\&&+\sum_{k\neq l}\kappa_k\kappa_lh_{kki}h_{lli})\nonumber\\
&=&\sum_{j\neq i}[P(3\sigma_k^{ii}+2\sigma_k^{jj}-2\sigma_{k-1}(\kappa|ij))-2\sigma_k^{ii}\kappa_j^2]h_{jji}^2+(P-2\kappa_i^2)\sigma_k^{ii}h_{iii}^2\nonumber\\
&&-2\sigma_k^{ii}\sum_{k\neq l}\kappa_k\kappa_lh_{kki}h_{lli}\nonumber\\
&=&\sum_{j\neq i}(P+2(P-\kappa_j^2))\sigma_k^{ii}h_{jji}^2+(P-2\kappa_i^2)\sigma_k^{ii}h_{iii}^2
-2\sigma_k^{ii}\sum_{k\neq l}\kappa_k\kappa_lh_{kki}h_{lli}\nonumber\\
&&+2P\sum_{j\neq i}\kappa_i\sigma^{jj,ii}h_{jji}^2\nonumber.
\end{eqnarray}
Note that,
\begin{eqnarray}\label{3.11}
2\sum_{j\neq i}\sum_{k\neq i,j}\kappa_k^2h_{jji}^2&=& \sum_{l\neq i}\sum_{k\neq i,l}\kappa_k^2h_{lli}^2+\sum_{k\neq i}\sum_{l\neq i,k}\kappa_l^2h_{kki}^2\\
  & \geq& 2\sum_{k\neq l; k,l\neq i}\kappa_k\kappa_lh_{kki}h_{lli}.\nonumber
\end{eqnarray}
By $\sqrt{3}\kappa_i\leq\kappa_1$ or $\kappa_1^2\geq 3\kappa_i^2$,
\begin{eqnarray}\label{3.12}
\sum_{j\neq i,1}(\frac{2P}{3}+2\kappa_i^2)h_{jji}^2+\sum_{j\neq i,1}\kappa_j^2h_{iii}^2\ \ \geq \ \ 2\kappa_ih_{iii}\sum_{j\neq i,1}\kappa_jh_{jji} .
\end{eqnarray}
Then (\ref{3.10}) becomes,
\begin{eqnarray}
&&P^2(B_i+C_i+D_i-E_i)\\
&\geq&(P+2\kappa_i^2)\sigma_k^{ii}h_{11i}^2+(\kappa_1^2-\kappa_i^2)\sigma_k^{ii}h_{iii}^2
-4\sigma_k^{ii}\kappa_ih_{iii}\kappa_1h_{11i}\nonumber\\
&&+\frac{P}{3}\sum_{j\neq 1,i}\sigma_k^{ii}h_{jji}^2+2P\sum_{j\neq i}\kappa_i\sigma^{jj,ii}h_{jji}^2\nonumber\\
&\geq&\sigma_k^{ii}[(\kappa_1^2+3\kappa_i^2)h^{2}_{11i}+(\kappa_1^2-\kappa_i^2)h_{iii}^2-4\kappa_1\kappa_ih_{iii}h_{11i}]+2P\sum_{j\neq i}\kappa_i\sigma^{jj,ii}h_{jji}^2.\nonumber
\end{eqnarray}
The above is nonnegative, provided the following inequality holds,
\begin{eqnarray}\label{newabove}
\sqrt{\kappa_1^2+3\kappa_i^2}\sqrt{\kappa_1^2-\kappa_i^2}&\geq& 2\kappa_1\kappa_i.
\end{eqnarray}
Set $x=\kappa_i/\kappa_1$. Inequality (\ref{newabove}) is equivalent to the following inequality,
$$3x^4+2x^2-1\leq 0.$$  This follows from the condition $\kappa_1\geq \sqrt{3}\kappa_i$. The proof is complete.
\end{proof}

We need another Lemma.
\begin{lemm}\label{lemma 3}
For $\lambda=1,\cdots, k-1$, if there exists some positive constant $\delta\leq 1 $, such that $\kappa_{\lambda}/\kappa_1\geq \delta$. Then there exits a sufficient small positive constant
$\delta'$ depending on $\delta$,  such that, if  $\kappa_{\lambda+1}/\kappa_1\leq \delta'$,  we have
$$A_i+B_i+C_i+D_i-E_i\geq 0,$$ for $i=1,\cdots, \lambda$.
\end{lemm}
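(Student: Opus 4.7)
I would adapt the computation of Lemma \ref{lemma 2}, starting from the identity
\begin{align*}
P^2(B_i+C_i+D_i-E_i)=&\sum_{j\ne i}(3P-2\kappa_j^2)\sigma_k^{ii}h_{jji}^2+(P-2\kappa_i^2)\sigma_k^{ii}h_{iii}^2\\
&-2\sigma_k^{ii}\sum_{p\ne q}\kappa_p\kappa_qh_{ppi}h_{qqi}+2P\sum_{j\ne i}\kappa_i\sigma_k^{jj,ii}h_{jji}^2
\end{align*}
obtained in the proof of the previous lemma. The new difficulty compared with Lemma \ref{lemma 2} is that for $i\le\lambda$ we may have $\kappa_i$ of the same order as $\kappa_1$, so the Schwarz pairing that was used to match $h_{iii}$ against $h_{11i}$ (which required $\sqrt 3\kappa_i\le\kappa_1$) no longer closes. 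The gap condition $\kappa_{\lambda+1}\le\delta'\kappa_1$ has to play the role of this missing inequality.

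The plan is to split the index set into the \emph{large cluster} $L=\{1,\cdots,\lambda\}$ (on which $\kappa_j$ is comparable to $\kappa_1$) and the \emph{small block} $S=\{\lambda+1,\cdots,n\}$ (on which $\kappa_j\le\delta'\kappa_1$). For $j\in S$, the coefficient $3P-2\kappa_j^2\ge(3-2(\delta')^2)P$ is essentially $3P$, so the elementary inequality $2|ab|\le a^2+b^2$ lets us absorb every cross-term $\kappa_p\kappa_qh_{ppi}h_{qqi}$ in which at least one of $p,q$ lies in $S$ into the diagonal terms $(3P-2\kappa_q^2)\sigma_k^{ii}h_{qqi}^2$. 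What remains is a residual quadratic form supported on $L\times L$ in the variables $\{h_{jji}\}_{j\in L}\cup\{h_{iii}\}$, involving only $O(\lambda^2)$ cross-terms.

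To dispose of this residual $L$-to-$L$ quadratic form I would invoke the sharper form of Lemma \ref{lemma 1}, namely inequality (\ref{1.5}) with $l=1$ and $\sigma_1^{pp,qq}=0$, which supplies the genuine concavity gain
$$A_i\ \ge\ \frac{\kappa_i\sigma_k(\alpha+1-\alpha\delta_0)}{P\sigma_1^2}\Bigl(\sum_ph_{ppi}\Bigr)^2$$
not used by Lemma \ref{lemma 2}. Expanding the square into its $L$- and $S$-portions and discarding the $S$-portion by a further Peter--Paul step, the $L$-portion of this contribution can be arranged to dominate the residual quadratic form on $L\times L$ once $\delta'$ is chosen small enough depending on $\delta$. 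The crucial point is that, under the gap condition, both $\sigma_k$ and $\sigma_k^{ii}$ for $i\in L$ are forced to contain at least $k-\lambda\ge 1$ factors from $S$, so the ratio $\sigma_k^{ii}\kappa_p\kappa_q\bigl/\bigl(\sigma_k\sigma_1^{-2}\bigr)$ for $p,q\in L$ stays bounded, and the prefactors can be balanced by the single parameter $\delta'$.

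The main obstacle I anticipate is the careful bookkeeping in the last step: simultaneously managing the Peter--Paul absorption for the $S$-cross-terms, the concavity gain from $A_i$, and the extra $2P\kappa_i\sigma_k^{jj,ii}h_{jji}^2$ with mixed signs as $j$ varies between $L$ and $S$, while keeping track of the correct scale comparisons for $\sigma_k$, $\sigma_k^{ii}$, $\sigma_k^{jj,ii}$. Once the dependence of $\delta'$ on $\delta$, $n$, $k$ is fixed, the cases $i=1,\cdots,\lambda$ can all be handled uniformly, completing the inductive set-up needed to combine with Lemma \ref{lemma 2} in the overall estimate.
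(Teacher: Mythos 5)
Your plan starts from the same identity for $P^2(B_i+C_i+D_i-E_i)$ as the paper, and the large/small split with Peter--Paul absorption of the $S$-cross-terms is a sensible first step. But there is a genuine gap in the second step: you propose to use (\ref{1.5}) with $l=1$, whereas the paper's proof crucially uses $l=\lambda$ when $\lambda\geq 2$. This is not a cosmetic difference. With $l=1$ you only control
\[
A_i\ \geq\ \frac{\kappa_i\sigma_k\,(\alpha+1-\delta_0\alpha)}{P\sigma_1^2}\Bigl(\sum_p h_{ppi}\Bigr)^2,\qquad \alpha=\tfrac{1}{k-1},
\]
a square of an \emph{unweighted} sum. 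The paper's choice $l=\lambda$ gives, after expanding $(\sigma_\lambda)_i$ and using the Newton inequality $\sigma_\lambda^{aa}\sigma_\lambda^{bb}-\sigma_\lambda\sigma_\lambda^{aa,bb}\geq 0$ as in (\ref{3.20})--(\ref{3.26}), the much stronger bound
\[
P^2A_i\ \geq\ \kappa_i^2\sigma_k^{ii}\sum_{a\leq\lambda}h_{aai}^2\ -\ \frac{C_\epsilon\kappa_i}{\delta^2}\sum_{a>\lambda}h_{aai}^2,
\]
a \emph{sum of squares} with the sharp weights $\sigma_\lambda^{aa}/\sigma_\lambda\sim 1/\kappa_a$. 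Only the latter cancels the surviving negative term $-\kappa_i^2\sigma_k^{ii}h_{iii}^2$ from (\ref{3.151})--(\ref{3.152}).

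To see that $l=1$ genuinely fails rather than merely being harder to bookkeep, take $\lambda=2$, $\kappa_1=T$, $\kappa_2=\delta T$ with $\delta=1/2$, $\kappa_j\approx 0$ for $j>2$, $i=1$, and third-order data supported on $\{h_{111},h_{221}\}$. Minimizing the $L$-block of $P^2(B_1+C_1+D_1-E_1)$ over $h_{221}$ and adding the $l=1$ gain from $A_1$ shows the sum is nonnegative only if $1+\frac{1}{k-1}$ exceeds a fixed constant $\approx 1.25$ coming from the ratios $P\sigma_k/(\kappa_1\sigma_1^2\sigma_k^{11})$ in this configuration; this fails already for $k\geq 6$. (The paper's $l=\lambda$ bound supplies $\kappa_1^2\sigma_k^{11}(h_{111}^2+h_{221}^2)$ directly and closes for all $k$.) So the decomposition framework in your proposal is sound, but the concavity estimate you invoke is the wrong one: you need the $F=(\sigma_k/\sigma_\lambda)^{1/(k-\lambda)}$ version of Lemma \ref{lemma 1} together with the Newton-inequality analysis of $\sigma_\lambda^{aa}\sigma_\lambda^{bb}-\sigma_\lambda\sigma_\lambda^{aa,bb}$, not the $\sigma_1$ version.
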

\begin{proof} By (\ref{3.10}) and (\ref{3.11}),
\begin{eqnarray}
&&P^2(B_i+C_i+D_i-E_i)\\
&=&\sum_{j\neq i}[P(3\sigma_k^{ii}+2\sigma_k^{jj}-2\sigma_{k-1}(\kappa|ij))-2\sigma_k^{ii}\kappa_j^2]h_{jji}^2+(P-2\kappa_i^2)\sigma_k^{ii}h_{iii}^2\nonumber\\
&&-2\sigma_k^{ii}\sum_{k\neq l}\kappa_k\kappa_lh_{kki}h_{lli}\nonumber\\
&\geq&\sum_{j\neq i}(P+2\kappa_i^2)\sigma_k^{ii}h_{jji}^2+(P-2\kappa_i^2)\sigma_k^{ii}h_{iii}^2
-4\sigma_k^{ii}\kappa_ih_{iii}\sum_{j\neq i}\kappa_jh_{jji}\nonumber\\
&&+P\sum_{j\neq i}2(\sigma_{k-1}(\kappa|j)-\sigma_{k-1}(\kappa|ij))h_{jji}^2.\nonumber
\end{eqnarray}
For $i=1$, the above inequality becomes,
\begin{eqnarray}\label{3.151}
&&P^2(B_i+C_i+D_i-E_i)\\
&\geq &\sum_{j\neq 1}(3\kappa_1^2\sigma_k^{11}+\kappa_1^2\sigma_k^{jj})h_{jj1}^2+\sum_{j\neq 1}\kappa_j^2\sigma^{11}_kh_{111}^2-4\sigma_{k}^{11}\kappa_1h_{111}\sum_{j\neq 1}\kappa_j h_{jj1}\nonumber\\
&&+P\sum_{j\neq 1}(\sigma_{k-1}(\kappa|j)-2\sigma_{k-1}(\kappa|1j))h_{jj1}^2-\kappa_1^2\sigma_k^{11}h_{111}^2\nonumber\\
&\geq&P\sum_{j\neq 1}(\sigma_{k-1}(\kappa|j)-2\sigma_{k-1}(\kappa|1j))h_{jj1}^2-\kappa_1^2\sigma_k^{11}h_{111}^2\nonumber.
\end{eqnarray}
For $i\neq 1$, we replace the index $j\neq i,1$ with $j\neq i$ in (\ref{3.12}), then
\begin{eqnarray}\label{3.152}
 P^2(B_i+C_i+D_i-E_i) \geq \ P\sum_{j\neq i}2(\sigma_{k-1}(\kappa|j)-\sigma_{k-1}(\kappa|ij))h_{jji}^2-\kappa_i^2\sigma_k^{ii}h_{iii}^2.
 \end{eqnarray}
By (\ref{1.5}) in Lemma \ref{lemma 1},
\begin{eqnarray}\label{3.16}
A_i&\geq &\frac{\kappa_i}{P}[\sigma_k(1+\frac{\alpha}{2})\frac{(\sigma_{\lambda})^2_i}{\sigma^2_{\lambda}}-\frac{\sigma_k}{\sigma_{\lambda}}\sigma_{\lambda}^{pp,qq}h_{ppi}h_{qqi}]\\
&\geq&\frac{\kappa_i\sigma_k}{P\sigma_{\lambda}^2}[(1+\frac{\alpha}{2})\sum_{a}(\sigma_{\lambda}^{aa}h_{aai})^2+\frac{\alpha}{2}\sum_{a\neq b}\sigma_{\lambda}^{aa}\sigma_{\lambda}^{bb}h_{aai}h_{bbi}\nonumber\\&&+\sum_{a\neq b}(\sigma_{\lambda}^{aa}\sigma_{\lambda}^{bb}-\sigma_{\lambda}\sigma_{\lambda}^{aa,bb})h_{aai}h_{bbi}].\nonumber
\end{eqnarray}
For $\lambda=1$,  note that $\sigma_1^{aa}=1$ and $\sigma_1^{aa,bb}=0$. Hence,
\begin{eqnarray}
(1+\frac{\alpha}{2})\sum h_{aai}h_{bbi}
&\geq&2(1+\frac{\alpha}{2})\sum_{a\neq 1}h_{aai}h_{11i}+(1+\frac{\alpha}{2})h_{11i}^2\\
&\geq&(1+\frac{\alpha}{4})h_{11i}^2-C_{\alpha}\sum_{a\neq
1}h_{aai}^2\nonumber
\end{eqnarray}
In turn,
\begin{eqnarray}\label{3.18}
P^2A_i&\geq&\frac{P\kappa_i\sigma_k}{\sigma_1^2}(1+\frac{\alpha}{4})h^2_{11i}-\frac{\kappa_i PC_{\alpha}}{\sigma_1^2}\sum_{a\neq 1}h_{aai}^2\\
&\geq&\frac{\kappa_i^2\sigma_k^{ii}}{(1+\sum_{j\neq 1}\kappa_j/\kappa_1)^2}(1+\frac{\alpha}{4})h^2_{11i}-C_{\alpha}\kappa_i\sum_{a\neq 1}h_{aai}^2\nonumber\\
&\geq&\kappa_i^2\sigma_k^{ii}h_{11i}^2-C_{\alpha}\kappa_i\sum_{a\neq 1}h_{aai}^2.\nonumber
\end{eqnarray}
The last inequality comes from the fact
\begin{eqnarray}
1+\frac{\alpha}{4}&\geq& (1+(n-1)\delta')^2.
\end{eqnarray}
For $\lambda\geq 2$, obviously, for $a\neq b$,
\begin{eqnarray}\label{3.20}
&&\ \ \ \ \ \sigma_{\lambda}^{aa}\sigma_{\lambda}^{bb}-\sigma_{\lambda}\sigma_{\lambda}^{aa,bb}\\
&=&(\kappa_b\sigma_{\lambda-2}(\kappa|ab)+\sigma_{\lambda-1}(\kappa|ab))(\kappa_a\sigma_{\lambda-2}(\kappa|ab)+\sigma_{\lambda-1}(\kappa|ab))\nonumber\\
&&-(\kappa_a\kappa_b\sigma_{\lambda-2}(\kappa|ab)+\kappa_a\sigma_{\lambda-1}(\kappa|ab)+\kappa_b\sigma_{\lambda-1}(\kappa|ab)+\sigma_{\lambda}(\kappa|ab))\sigma_{\lambda-2}(\kappa|ab) \nonumber \\
&=&\sigma_{\lambda-1}^2(\kappa|ab)-\sigma_{\lambda}(\kappa|ab)\sigma_{\lambda-2}(\kappa|ab)\nonumber\\
&\geq &0,\nonumber
\end{eqnarray}
by the Newton inequality. It follows from (\ref{3.20}),
\begin{eqnarray}\label{3.21}
&&\sum_{a\neq b; a,b\leq \lambda}(\sigma_{\lambda}^{aa}\sigma_{\lambda}^{bb}-\sigma_{\lambda}\sigma_{\lambda}^{aa,bb})h_{aai}h_{bbi}\\
&\geq &-\sum_{a\neq b; a,b\leq \lambda}(\sigma_{\lambda-1}^2(\kappa|ab)-\sigma_{\lambda}(\kappa|ab)\sigma_{\lambda-2}(\kappa|ab))h_{aai}^2\nonumber\\
&\geq&-\sum_{a\neq b;a,b\leq \lambda}C_1(\frac{\kappa_{\lambda+1}}{\kappa_b})^2(\sigma_{\lambda}^{aa}h_{aai})^2\nonumber\\
&\geq&-\frac{C_2}{\delta^2}(\frac{\kappa_{\lambda+1}}{\kappa_1})^2\sum_{a}(\sigma^{aa}_{\lambda}h_{aai})^2\ \ \geq \ \ -\epsilon\sum_{a}(\sigma_{\lambda}^{aa}h_{aai})^2\nonumber.
\end{eqnarray}
Here, we choose a sufficient small $\delta'$, such that,
\begin{eqnarray}\label{3.22}
\delta'&\leq& \delta\sqrt{\epsilon/C_2}.
\end{eqnarray}
By (\ref{3.20}),
\begin{eqnarray}\label{3.23}
&&2\sum_{a\leq \lambda; b> \lambda}(\sigma_{\lambda}^{aa}\sigma_{\lambda}^{bb}-\sigma_{\lambda}\sigma_{\lambda}^{aa,bb})h_{aai}h_{bbi}\\
&\geq &-2\sum_{a\leq \lambda; b> \lambda}\sigma_{\lambda}^{aa}\sigma_{\lambda}^{bb}|h_{aai}h_{bbi}|\nonumber\\
&\geq&-\epsilon\sum_{a\leq \lambda; b>\lambda}(\sigma_{\lambda}^{aa}h_{aai})^2-\frac{1}{\epsilon}\sum_{a\leq \lambda; b>\lambda}(\sigma_{\lambda}^{bb}h_{bbi})^2\nonumber.
\end{eqnarray}
Again by (\ref{3.20}),
\begin{eqnarray}\label{3.24}
\sum_{a\neq b; a,b> \lambda}(\sigma_{\lambda}^{aa}\sigma_{\lambda}^{bb}-\sigma_{\lambda}\sigma_{\lambda}^{aa,bb})h_{aai}h_{bbi}&\geq& -\sum_{a\neq b; a,b>\lambda}\sigma_{\lambda}^{aa}\sigma_{\lambda}^{bb}|h_{aai}h_{bbi}|\\
&\geq &-\sum_{a\neq b; a,b>\lambda}(\sigma_{\lambda}^{aa}h_{aai})^2.\nonumber
\end{eqnarray}
Combining (\ref{3.16}), (\ref{3.21}), (\ref{3.23}) and (\ref{3.24}), taking $\alpha=0$ in (\ref{3.16}), we get,
\begin{eqnarray}
A_i
&\geq&\frac{\kappa_i\sigma_k}{P\sigma_{\lambda}^2}[(1-2\epsilon)\sum_{a\leq \lambda}(\sigma_{\lambda}^{aa}h_{aai})^2-C_{\epsilon}\sum_{a>\lambda}(\sigma_{\lambda}^{aa}h_{aai})^2].
\end{eqnarray}
Therefore,
\begin{eqnarray}\label{3.26}
&&P^2A_i\\&\geq&\frac{P\kappa^2_i\sigma^{ii}_k}{\sigma_{\lambda}^2}(1-2\epsilon)\sum_{a\leq \lambda}(\sigma_{\lambda}^{aa}h_{aai})^2-\frac{P\kappa_i\sigma_k C_{\epsilon}}{\sigma_{\lambda}^2}\sum_{a>{\lambda}}(\sigma_{\lambda}^{aa}h_{aai})^2\nonumber\\
&\geq&\frac{P\kappa^2_i\sigma_k^{ii}}{\kappa_1^2}(1-2\epsilon)\sum_{a\leq \lambda}(\frac{\kappa_a\sigma_{\lambda}^{aa}}{\sigma_{\lambda}})^2h_{aai}^2-\frac{\kappa_1^2\kappa_i C_{\epsilon}}{\sigma_{\lambda}^2}\sum_{a>\lambda}(\sigma_{\lambda}^{aa}h_{aai})^2\nonumber\\
&\geq&\kappa^2_i\sigma_k^{ii}(1-2\epsilon)(1+\delta^2)\sum_{a\leq \lambda}(1-\frac{C_3\kappa_{\lambda+1}}{\kappa_a})^2h_{aai}^2-\frac{\kappa_a^2\kappa_i C_{\epsilon}}{\delta^2\sigma_{\lambda}^2}\sum_{a>\lambda}(\sigma_{\lambda}^{aa}h_{aai})^2\nonumber\\
&\geq&\kappa^2_i\sigma_k^{ii}(1-2\epsilon)(1+\delta^2)(1-\frac{C_3\kappa_{\lambda+1}}{\delta\kappa_1})^2\sum_{a\leq \lambda}h_{aai}^2-\frac{\kappa_i C_{\epsilon}}{\delta^2}\sum_{a>\lambda}h_{aai}^2\nonumber\\
&\geq&\kappa_i^2\sigma_k^{ii}\sum_{a\leq \lambda}h_{aai}^2-\frac{\kappa_i C_{\epsilon}}{\delta^2}\sum_{a>\lambda}h_{aai}^2,\nonumber
\end{eqnarray}
in the above, we have used the fact that we may choose $\delta'$ and $\epsilon$ satisfying
\begin{eqnarray}
\delta'C_3\leq 2\epsilon \delta ,\ \  \   (1-2\epsilon)^2(1+\delta^2)\geq 1 .
\end{eqnarray}

By (\ref{3.151}), (\ref{3.152}), (\ref{3.18}) and (\ref{3.26}), for each $i$,  we have,
\begin{eqnarray}
&&P^2(A_i+B_i+C_i+D_i-E_i)\\
&\geq& \sum_{j\neq i}(P\sigma_{k-1}(\kappa|j)-2P\sigma_{k-1}(\kappa|ij))h_{jji}^2-C_{\alpha,\delta}\kappa_i\sum_{j>\lambda}h^2_{jji}.\nonumber
\end{eqnarray}
Now, for $j\leq \lambda$,
\begin{eqnarray}\label{3.31}
\sigma_{k-1}(\kappa|j)-2\sigma_{k-1}(\kappa|ij)
&=& \kappa_i\sigma_{k-2}(\kappa|ij)-\sigma_{k-1}(\kappa|ij)\\
&\geq&\frac{\kappa_1\cdots\kappa_k}{\kappa_j}-C\frac{\kappa_1\cdots\kappa_{k+1}}{\kappa_i\kappa_j}\nonumber\\
&\geq&\frac{\kappa_1\cdots\kappa_k}{\kappa_j}(1-C\frac{\kappa_{k+1}}{\delta\kappa_1})\nonumber\\
&\geq&\frac{\varepsilon \sigma_k}{\kappa_j}(1-C_4\delta'/\delta).\nonumber
\end{eqnarray}
For $\lambda<j\leq k$, in a similar way, we have,
\begin{eqnarray}\label{3.32}
\sigma_{k-1}(\kappa|j)-2\sigma_{k-1}(\kappa|ij)-C_{\epsilon, \alpha}\frac{\kappa_i}{P}
&\geq&\frac{\varepsilon \sigma_k}{\kappa_j}(1-C_4\delta'/\delta)-\frac{C_{\epsilon,\alpha}}{\kappa_1}.
\end{eqnarray}
For $j>k$,
\begin{eqnarray}\label{3.33}
&&\sigma_{k-1}(\kappa|j)-2\sigma_{k-1}(\kappa|ij)-\frac{C_{\epsilon,\alpha}\kappa_i}{\kappa^2_1} \\
&=& \kappa_i\sigma_{k-2}(\kappa|ij)-\sigma_{k-1}(\kappa|ij)-\frac{C_{\epsilon,\alpha}\kappa_i}{\kappa^2_1}
\ \ \geq \ \ \frac{\kappa_1\cdots\kappa_k}{\kappa_k}-C\frac{\kappa_1\cdots\kappa_{k}}{\kappa_i}-\frac{C_{\epsilon,\alpha}}{\kappa_1}\nonumber\\
&\geq&\frac{\kappa_1\cdots\kappa_k}{\kappa_k}(1-C\frac{\kappa_{k}}{\delta\kappa_1})-\frac{C_{\epsilon,\alpha}}{\kappa_1}
\ \ \geq \ \ \frac{\varepsilon \sigma_k}{\kappa_k}(1-C_4\delta'/\delta)-\frac{C_{\epsilon,\alpha}}{\kappa_1}.\nonumber
\end{eqnarray}
We may choose $$\delta'\ \ \leq \ \ \delta/(2C_4), $$ so that (\ref{3.31}) is nonnegative. We further impose that
$$\delta'\ \ \leq \ \ \varepsilon\sigma_k/(2C_{\epsilon,\alpha}).$$ Thus, both (\ref{3.32}) and (\ref{3.33}) are non-negative.  The proof is complete.
\end{proof}

A directly corollary of Lemma \ref{lemma 2} and Lemma \ref{lemma 3} is the following.
\begin{coro}\label{cor4}
There exists a finite sequence of positive numbers $\{\delta_i\}_{i=1}^k$, such that, if the following inequality holds for some $1\leq i\leq k$, $$\frac{\kappa_i}{\kappa_1} \ \ \geq \ \ \delta_i,$$ then,
\begin{eqnarray}\label{3.34}
0&\leq&\frac{1}{P}[\sum_l\kappa_l(K(\sigma_k)_l^2-\sigma_k^{pp,qq}h_{ppl}h_{qql}+\sigma_k^{pp,qq}h_{pql}^2)+\sum_{p,q}\sigma_k^{ii}h_{pqi}^2]\\
&&-\frac{2\sigma_k^{ii}}{P^2}(\sum_j \kappa_jh_{jji})^2\nonumber.
\end{eqnarray}
\end{coro}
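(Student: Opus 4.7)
The plan is to combine Lemma \ref{lemma 2} and Lemma \ref{lemma 3} through a recursive choice of the thresholds $\delta_1 > \delta_2 > \cdots > \delta_k > 0$ together with a case analysis on the configuration of the eigenvalues. Once the constants are in hand, the inequality (\ref{3.34}) reduces to the per-index nonnegativity $A_i+B_i+C_i+D_i-E_i \geq 0$ (plus extra nonnegative terms) after expanding the definitions of $A_i, B_i, C_i, D_i, E_i$; the two lemmas supply precisely this bound in complementary regimes.

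I would set $\delta_k = 1/\sqrt{3}$, the threshold appearing in Lemma \ref{lemma 2}, and define $\delta_{k-1}, \ldots, \delta_1$ in reverse inductively. Given $\delta_{\lambda+1}$, Lemma \ref{lemma 3} applied with parameter $\delta$ produces a gap threshold $\delta'(\delta)$; the quantitative estimates (\ref{3.22})--(\ref{3.33}) in the proof of Lemma \ref{lemma 3} show that $\delta'(\delta)$ can be taken as a fixed positive multiple of $\delta$, so I can choose $\delta_\lambda \in (0,1)$ large enough that $\delta_{\lambda+1} \leq \delta'(\delta_\lambda)$. The bootstrap closes in $k$ steps and produces a sequence depending only on $n$ and $k$.

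Now suppose $\kappa_i/\kappa_1 \geq \delta_i$ for some $i \in \{1,\ldots,k\}$. If $\kappa_k/\kappa_1 \geq \delta_k$, then all of $\kappa_1, \ldots, \kappa_k$ are comparable to $\kappa_1$, so $\sigma_k(\kappa) \geq c(n,k)\,\kappa_1^k$; since $M$ is convex and $\sigma_k = f$ is bounded by the data, $\kappa_1$ is already a priori bounded, and (\ref{3.34}) is controlled trivially. Otherwise $\kappa_k/\kappa_1 < \delta_k$, which forces $i \leq k-1$, and I let $\lambda^{\ast}$ be the largest index in $\{i, i+1, \ldots, k-1\}$ satisfying $\kappa_{\lambda^{\ast}}/\kappa_1 \geq \delta_{\lambda^{\ast}}$; this set is nonempty because $j = i$ qualifies by hypothesis. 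By maximality of $\lambda^{\ast}$ we have $\kappa_{\lambda^{\ast}+1}/\kappa_1 < \delta_{\lambda^{\ast}+1} \leq \delta'(\delta_{\lambda^{\ast}})$, so Lemma \ref{lemma 3} applies with $\lambda = \lambda^{\ast}$ and $\delta = \delta_{\lambda^{\ast}}$, yielding $A_j + B_j + C_j + D_j - E_j \geq 0$ for every $j = 1, \ldots, \lambda^{\ast}$; in particular for the original index $j = i$.

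The main obstacle is the self-consistency of the recursive construction: one must trace through Lemma \ref{lemma 3} to confirm that $\delta'(\delta)$ is bounded below by a uniform positive fraction of $\delta$ (rather than collapsing faster), so that the finite sequence $\delta_1, \ldots, \delta_k$ stays uniformly positive after $k$ iterations. Once that is verified, the passage from the per-index nonnegativity $A_i+B_i+C_i+D_i-E_i \geq 0$ to the summed form (\ref{3.34}) is routine algebra: each $A_l$ is nonnegative by Lemma \ref{lemma 1}, so $\frac{1}{P}\sum_l \kappa_l[K(\sigma_k)_l^2 - \sigma_k^{pp,qq}h_{ppl}h_{qql}] \geq A_i$, and the remaining contributions $\frac{1}{P}\sum_l \kappa_l \sigma_k^{pp,qq}h_{pql}^2$ and $\frac{1}{P}\sigma_k^{ii}\sum_{p\neq q}h_{pqi}^2$ are manifestly nonnegative since convexity of $M$ ensures $\sigma_k^{pp,qq} = \sigma_{k-2}(\kappa|p,q) \geq 0$.
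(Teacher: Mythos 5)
Your recursive construction of the $\delta_i$ runs in the wrong direction and cannot close. The paper builds the sequence \emph{forward}: it sets $\delta_1 = 1/\sqrt{3}$ and defines $\delta_{i+1} = \min\{\delta_1, \delta'_{i+1}\}$ where $\delta'_{i+1}$ is the gap threshold that Lemma \ref{lemma 3} produces from $\delta_i$. Since $\delta'(\delta)$ is bounded by a \emph{small} multiple of $\delta$ (look at (\ref{3.22}), which forces $\delta' \leq \delta\sqrt{\epsilon/C_2}$, and at $\delta' \leq \delta/(2C_4)$), the forward sequence decreases and stays in $(0,1)$. You instead set $\delta_k = 1/\sqrt{3}$ and try to define $\delta_{\lambda}$ \emph{backward} by requiring $\delta_{\lambda+1} \leq \delta'(\delta_\lambda) = c\,\delta_\lambda$ with $c < 1$. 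That forces $\delta_\lambda \geq \delta_{\lambda+1}/c$, so the backward sequence grows geometrically and exceeds $1$ after at most a couple of steps, at which point the condition $\kappa_\lambda/\kappa_1 \geq \delta_\lambda$ is vacuous. This is not a self-consistency issue to ``trace through''; the recursion simply does not close in this direction.

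Two further gaps: first, after invoking Lemma \ref{lemma 3} you only obtain $A_j+B_j+C_j+D_j-E_j \geq 0$ for $j \leq \lambda^*$, but (\ref{3.34}) requires control of \emph{every} index; the paper closes this by applying Lemma \ref{lemma 2} to all $j \geq \lambda^*+1$, which works precisely because its forward construction guarantees $\delta_{\lambda^*+1} \leq \delta_1 = 1/\sqrt{3}$ (your backward sequence violates this for $\lambda^* < k-1$). Second, your ``routine algebra'' paragraph is wrong: you argue that $\sum_l A_l$ plus the manifestly nonnegative off-diagonal terms already dominate, but you omit the subtracted term $-\sum_l E_l = -\sum_l \frac{2\sigma_k^{ll}}{P^2}(\sum_j \kappa_j h_{jjl})^2$, which is negative and not controlled by a single per-index inequality. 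The summed inequality (\ref{3.34}) only follows once $A_j + B_j + C_j + D_j - E_j \geq 0$ is established for \emph{each} $j$, so that all the $-E_j$ cancellations happen index-by-index. Finally, in the regime $\kappa_k/\kappa_1 \geq \delta_k$ an a priori bound on $\kappa_1$ says nothing about the third-order quantities appearing in (\ref{3.34}), so claiming the inequality is ``controlled trivially'' there is unfounded; that regime is exactly where the paper's Theorem \ref{theo 11-0} proof bypasses the corollary and bounds $\kappa_1$ directly (Case B).
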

\begin{proof} We use induction to find sequence $\{\delta_i\}_{i=1}^k$. Let $\delta_1=1/\sqrt{3}$. Then $\kappa_1/\kappa_1=1>\delta_1$. The claim holds for $i=1$ follows from the proof in the previous lemma. Assume that we have determined $\delta_i$ for $1\leq i\leq k-1$. We want to search for $\delta_{i+1}$.  In Lemma \ref{lemma 3}, we may choose $\lambda=i$ and $\delta=\delta_i$. Then there is some $\delta'_{i+1}$ such that, if $\kappa_{i+1}\leq \delta'_{i+1}\kappa_1$, we have $A_j+B_j+C_j+D_j-E_j \geq 0$ for $1\leq j\leq i$. Pick $$\delta_{i+1}=\min\{\delta_1, \delta_{i+1}'\}.$$ If $\kappa_{i+1}\leq \delta_{i+1}\kappa_1$, by Lemma \ref{lemma 2},  $A_j+B_j+C_j+D_j-E_j \geq 0$ for $j\geq i+1$. We obtain (\ref{3.34}) for $i+1$ case.
\end{proof}

\medskip

\noindent{\bf Proof of Theorem \ref{theo 11-0}.} Again, the proof will be divided into two cases.

\noindent Case (A): There exists some $2\leq i\leq k$, such that  $\kappa_i\leq \delta_i\kappa_1$. By Corollary \ref{cor4}, (\ref{3.8}),(\ref{6}) and the Schwarz inequality,
\begin{eqnarray}
 0&\geq&\frac{1}{P}[\sum_l\kappa_l(-C-C\kappa_{l}^2-K(\sigma_k)_l^2)+kf\sum_l\kappa_l^3-\sigma_k^{ii}h_{ii}^2\sum_l\kappa_l^2]
-N\frac{kf}{u} \nonumber\\&&+N\sigma_k^{ii}h_{ii}^2+N\frac{\sigma_k^{ii}h_{ii}^2\langle \p_i,X\rangle^2}{u^2}-\frac{N}{u}\sum_sd_Xf(\p_s)\langle \p_s,X\rangle.\nonumber\\
&\geq&\frac{1}{P}[-C(K)-C(K)\sum_l\kappa_l^3]-\sigma_k^{ii}h_{ii}^2
+N\sigma_k^{ii}h_{ii}^2-C(N)\nonumber\\
&\geq&-\frac{C(K)\kappa_1^3+C(K)}{P}+(N-1)\varepsilon\sigma_k\kappa_1-C(N),\nonumber
\end{eqnarray}
in the last inequality, we have used $$\kappa_1\sigma_k^{11}\geq \frac{k}{n}\sigma_k.$$
If we choose $$\varepsilon \sigma_k(N-1)\geq C(K)+1,$$  an upper bound of $\kappa_1$ follows.

\noindent Case(B): If the Case(A) does not hold.  That means $\kappa_k\geq \delta_k\kappa_1$. Since $\kappa_l\geq 0$, we have,
$$\sigma_k\geq \kappa_1\kappa_2\cdots\kappa_k\geq\delta_k^k\kappa_1^k.$$ This implies the bound of $\kappa_1$.  \qed

\medskip

We have three remarks about the above $C^2$  estimate.
\begin{rema}
Following the same arguments, we can establish similar $C^2$ estimates for convex solutions of $\sigma_k$-Hessian equation
\begin{equation} \sigma_k(\nabla^2 u)=f(x,u, \nabla u).\end{equation}
\end{rema}
\begin{rema}
The function $P$ chosen here is the order $2$ Newton polynomial. In fact, our arguments can be adopted for Newton polynomials of order $m\geq 2$ to obtain the global $C^2$ estimate.
\end{rema}
\begin{rema}
The assumption of convexity of solution can be weakened. Our proof works if the principal curvatures are bounded from below by some constant, with test function modified as $\log P+g(u)+a|X|^2$. The convexity assumption can also be weakened to $k+1$ convex.
\end{rema}

\section{The prescribed curvature equations}

The a priori estimates we establish in the previous sections may yield existence of solutions to the prescribing equation (\ref{1.1}). By Theorem \ref{theo 11} and Theorem \ref{theo2 11}, we need to obtain $C^1$ bounds for solutions. The treatment of this section follows largely from Caffarelli-Nirenberg-Spruck \cite{CNS5}. We are looking for starshaped hypersurface $M$.

For $x\in\mathbb{S}^n$,  let
$$X(x)=\rho(x)x,$$ be the position vector of the hypersurface $M$.

First is the gradient bound.

\begin{lemm}\label{lemma 12}
If the hypersurface $X$ satisfies condition (\ref{4.2}) and $\rho$ has positive upper and lower bound, then  there is a constant $C$  depending on the minimum and maximum values of $\rho$, such that, $$|\nabla \rho|\leq C.$$
\end{lemm}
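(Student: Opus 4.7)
The plan is to reduce the gradient bound to a positive lower bound on the support function $u=\langle X,\nu\rangle$. Since $M=\{\rho(x)x:x\in\mathbb S^n\}$ is starshaped, a direct computation gives
\[
 u=\frac{\rho^2}{\sqrt{\rho^2+|\nabla\rho|^2}},\qquad \frac{|\nabla\rho|^2}{\rho^2}=\frac{\rho^2}{u^2}-1,
\]
so a uniform bound $u\ge c>0$ combined with the positive upper bound on $\rho$ yields the desired estimate on $|\nabla\rho|$.

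To bound $u$ from below I would apply the maximum principle to an auxiliary function of the form $G=-\log u-\alpha\log\rho$ on $M$, with $\alpha>0$ to be chosen. The key tools are the standard hypersurface identities
\[
u_i=h_{ij}(X^T)^j,\qquad u_{ij}=h_{ij,l}(X^T)^l+h_{ij}-u\,h_{ik}h^k_j,
\]
\[
\rho_i=\frac{(X^T)_i}{\rho},\qquad \rho_{ij}=\frac{g_{ij}-uh_{ij}}{\rho}-\frac{(X^T)_i(X^T)_j}{\rho^3},
\]
where $X^T=X-u\nu$, together with the differentiated equation $F^{ij}h_{ij,l}=\partial_{e_l}f$ (with $F^{ij}=\partial\sigma_k/\partial h_{ij}$), Euler's identity $F^{ij}h_{ij}=kf$, and the Newton--MacLaurin bound $\sum_i F^{ii}\ge c\,f^{(k-1)/k}\ge c_0>0$. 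At a point where $G$ attains its maximum, the first-order condition $u_i/u=-\alpha\rho_i/\rho$ forces $X^T$ to be an eigenvector of the Weingarten operator with eigenvalue $\alpha u/\rho^2$; in particular $|h(X^T)|\le \alpha u/\rho$.

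The decisive input is condition (\ref{4.2}), which is equivalent to
\[
\langle X, f_X\rangle + kf\le 0\quad\text{at fixed }\nu.
\]
Contracting $F^{ij}G_{ij}\le 0$ with $F^{ij}$ and using the formulas above, the tangential derivative of $f$ expands as $\nabla_{X^T}f=f_X\cdot X^T+f_\nu\cdot h(X^T)=f_X\cdot X-uf_X\cdot\nu+f_\nu\cdot h(X^T)$; condition (\ref{4.2}) kills precisely the $kf$ coming from Euler's identity, so the remaining terms in $F^{ij}u_{ij}/u$ are bounded by $C-F^{ii}\kappa_i^2$. Combining with the explicit expansion of $F^{ij}\rho_{ij}/\rho$, which contains the favorable term $\alpha\,(\text{tr}\,F)/\rho^2$, and using the eigenvector relation to control $F^{ij}(X^T)_i(X^T)_j/\rho^4$, the inequality reduces (for an appropriate choice of $\alpha$) to
\[
 0\le F^{ij}G_{ij}\le C-\frac{\alpha\,\text{tr}(F)}{\rho^2}+\text{(controlled)},
\]
which is incompatible with $u$ being too small, giving $u\ge c$.

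The main obstacle is bookkeeping in the second-order computation: matching the $kf/u$ from Euler's identity with the $-kf/u$ produced by (\ref{4.2}) so they cancel exactly, and then ensuring that the remaining error terms—in particular $f_X\cdot\nu$, $f_\nu\cdot h(X^T)/u$, and the quadratic form $F^{ij}(X^T)_i(X^T)_j/\rho^4$—are either bounded or dominated by the positive trace term $\alpha\,\text{tr}(F)/\rho^2$. The eigenvector identity at the critical point is what makes this possible: it pins $\kappa_1=\alpha u/\rho^2$, converts $|h(X^T)|/u$ into $|X^T|/\rho^2\le 1/\rho$, and reduces the quadratic form to $F^{11}(\rho^2-u^2)$, which is controlled by $\rho$.
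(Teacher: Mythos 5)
Your overall strategy coincides with the paper's: reduce the gradient bound to a positive lower bound on $u=\langle X,\nu\rangle$ via $u=\rho^2/\sqrt{\rho^2+|\nabla\rho|^2}$, run the maximum principle on $-\log u$ plus a radial term, use condition (\ref{4.2}) to cancel the $kf$ from Euler's identity, and exploit the first-order condition making $X^T$ an eigendirection of the Weingarten map. But the way you propose to close the argument has the signs of the decisive terms inverted, and as written the final inequality does not follow. With $G=-\log u-\alpha\log\rho$ you must work at a \emph{maximum} of $G$ (a minimum of $u\rho^{\alpha}$), where $F^{ij}G_{ij}\le 0$; your displayed chain starts from $0\le F^{ij}G_{ij}$, which is the condition at a minimum of $G$, and a minimum of $G$ gives no information on $\inf u$. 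Moreover, since $\rho_{ij}=(g_{ij}-uh_{ij})/\rho-(X^T)_i(X^T)_j/\rho^{3}$ and the radial term enters $G$ with a minus sign, the trace term appears in $F^{ij}G_{ij}$ as $-\alpha\,\mathrm{tr}(F)/\rho^{2}$: it is an \emph{unfavorable} term that must be absorbed, not the favorable one. Likewise the first-order condition $u_i/u=-\alpha\rho_i/\rho$ gives $h(X^T)=-(\alpha u/\rho^{2})X^T$, i.e.\ eigenvalue $-\alpha u/\rho^{2}$, not $+\alpha u/\rho^{2}$.

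The term that actually produces the contradiction is the one you propose merely to ``control'': the gradient-squared term $F^{ij}u_iu_j/u^{2}=\alpha^{2}F^{ij}(X^T)_i(X^T)_j/\rho^{4}=\alpha^{2}F^{11}(\rho^{2}-u^{2})/\rho^{4}$ (in a principal frame with $e_1=X^T/|X^T|$). It must be bounded from \emph{below}, and here the negative sign of the eigenvalue is essential: $\kappa_1=-\alpha u/\rho^{2}\le 0$ gives $F^{11}=\sigma_{k-1}(\kappa|1)=\sigma_{k-1}-\kappa_1\sigma_{k-2}(\kappa|1)\ge\sigma_{k-1}\ge\mathrm{tr}(F)/(n-k+1)$, while $\rho^{2}-u^{2}\ge r_1^{2}/2$ when $u$ is small. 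Then this term, quadratic in $\alpha$ and comparable to $\mathrm{tr}(F)$, dominates the linear-in-$\alpha$ bad terms $-\alpha\,\mathrm{tr}(F)/\rho^{2}$ and $-C\alpha$ (the latter from $d_{\nu}f(h(X^T))/u$), forcing $F^{ij}G_{ij}>0$ at the maximum unless $X^T=0$ there (in which case $u=\rho\ge\min\rho$). This is exactly the mechanism of the paper's proof, where $\gamma(|X|^2)=\alpha/|X|^2$ plays the role of $-\alpha\log\rho$, the first-order relation $h_{11}=2\gamma'u<0$ is used to get $\sigma_k^{11}\ge\sigma_{k-1}$, and the $(\gamma')^2\langle X,e_1\rangle^2\sigma_k^{11}$ term carries the day. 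So your ingredients are the right ones, but the roles of the quadratic form and of the trace term must be swapped, and the inequality must be run at the maximum of $G$ with the opposite orientation.
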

\begin{proof} We only need to obtain a positive lower bound of $u$.
Following \cite{GLM}, we consider
$$\phi=-\log u+\gamma(|X|^2).$$
Assume $X_0$ is the maximum value point of $\phi$. If $X$ is not parallel to the normal direction of $X$ at $X_0$ , we may choose the local orthonormal frame $\{e_1,\cdots, e_n\}$ on $M$ satisfying
$$\langle X, e_1\rangle \neq 0, \ \  \text{ and }\ \ \langle X,e_i\rangle=0,\ \  i\geq 2.$$
  Then, at $X_0$,
\begin{eqnarray}\label{4.3}
 u_i&=&2u\gamma'\langle X,e_i\rangle, \\
 \phi_{ii}&=&-\frac{1}{u}[h_{ii1}\langle X,e_1\rangle+h_{ii}-h_{ii}^2u]+[(\gamma')^2+\gamma''](|X|^2_i)^2+\gamma'|X|^2_{ii}.\nonumber
 \end{eqnarray}
Thus,
 \begin{eqnarray}\label{4.4}
 0&\geq & \sigma_k^{ii}\phi_{ii}=-\frac{\langle X,e_1\rangle}{u}\sigma_k^{ii}h_{ii1}-\frac{\sigma_k^{ii}h_{ii}}{u}+\sigma_k^{ii}h_{ii}^2
 +4[(\gamma')^2+\gamma'']\langle X,e_1\rangle^2\sigma_k^{11}\\
 &&\ \  \  \ +\gamma'\sigma_k^{ii}[2-2uh_{ii}]\nonumber.
\end{eqnarray}
By (\ref{6}),
$$
\sigma_k^{ii}h_{ii1}\ \ = \ \  d_X f(e_1)+h_{11}d_{\nu}f(e_1).
$$
Using \eqref{4.3} and $\langle X,e_1\rangle \neq 0$,  we have $$h_{11}=2\gamma'u.$$
Hence, \eqref{4.4} becomes,
\begin{eqnarray}\label{4.5}
 0&\geq &-\frac{1}{u}[\langle X,e_1\rangle d_Xf(e_1)+kf]+\sigma_k^{ii}h_{ii}^2
 +4[(\gamma')^2+\gamma'']\langle X,e_1\rangle^2\sigma_k^{11}\\
 &&+\gamma'\sigma_k^{ii}[2-2uh_{ii}]-2\gamma'\langle X,e_1\rangle d_{\nu}f(e_1) \nonumber.
\end{eqnarray}
Condition (\ref{4.2}) yields,
$$0\geq  \rho^{k-1}[kf+\rho\frac{\p f (X,\nu)}{\p \rho}]=\rho^{k-1}[kf+\rho d_Xf(\frac{\p X}{\p \rho})]=\rho^{k-1}[kf+d_Xf(X)].$$
Since in the local frame, $\langle X,e_i\rangle =0$, for $i\geq 2$, so $X=\langle X, e_1\rangle e_1$. \eqref{4.5} becomes,
 \begin{eqnarray}\label{4.6}
 0&\geq &\sigma_k^{ii}h_{ii}^2
 +4[(\gamma')^2+\gamma'']\langle X,e_1\rangle^2\sigma_k^{11}
+\gamma'\sigma_k^{ii}[2-2uh_{ii}]-2\gamma' d_{\nu}f(X) .
\end{eqnarray}
Choose
$$\gamma(t)=\frac{\alpha}{t},$$ for sufficient large $\alpha$. Therefore,
$$4[(\gamma')^2+\gamma'']|X|^2\sigma_k^{11}+2\gamma'\sum_i\sigma_k^{ii}+\sigma_k^{ii}h_{ii}^2\geq C\alpha^2 \sigma_k^{11},$$ and $$\sigma_k^{11}\geq \sigma_{k-1}\geq \sigma_k^{\frac{k-1}{k}}=f^{\frac{k-1}{k}}.$$
\eqref{4.6} is simplified to
\begin{eqnarray}\label{4.7}
 0&\geq &C_0\alpha^2f^{\frac{k-1}{k}}+\frac{2\alpha}{|X|^4} d_{\nu}f(X) .
\end{eqnarray}
By the assumption on $C^0$ bound, we have  $|d_{\nu}f(X)|\leq C$. Rewrite  \eqref{4.7},
$$0\geq f^{\frac{k-1}{k}}\alpha(C_0\alpha+\frac{2}{k|X|^4}d_{\nu}f^{\frac{1}{k}})>0,$$ for sufficient large $\alpha$, contradiction.  That is, at $X_0$,  $X$ is parallel to the normal direction.  Since $u$ is the support function, $u=\langle X,\nu\rangle=|X|$.
\end{proof}

\begin{theo}\label{them 14}
Suppose $k=2$, and $f$ satisfies condition (\ref{4.1}) and (\ref{4.2}), equation \eqref{1.1}  has only one admissible solution in $\{r_1<|X|<r_2\}$.
\end{theo}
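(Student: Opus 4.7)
The plan is the method of continuity for existence, combined with a comparison/maximum principle argument for uniqueness. The a priori estimates needed have already been essentially assembled in the paper: Theorem \ref{theo2 11-0} provides the global $C^2$ curvature bound for closed $2$-convex starshaped hypersurfaces, and Lemma \ref{lemma 12} provides the $C^1$ bound under condition (\ref{4.2}). What is left is the $C^0$ pinching, openness, and uniqueness.

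I would introduce a one-parameter family $\{f_t\}_{t\in[0,1]}$ connecting a model $f_0$ (arranged so that some specific sphere with radius in $(r_1,r_2)$ is its unique admissible starshaped solution) to $f_1=f$, chosen so that conditions (\ref{4.1}) and (\ref{4.2}) are preserved for every $t$; the simplest concrete choice is a convex combination with a suitable radial model adjusted by an $O(\varepsilon)$ symmetry-breaking perturbation. Let $T\subset[0,1]$ be the set of $t$ admitting a $C^{3,\alpha}$ admissible starshaped solution $M_t\subset\{r_1\le|X|\le r_2\}$. Openness at $t_0\in T$ follows from the implicit function theorem: the linearization of $\sigma_2(\kappa)-f_t$ at $M_{t_0}$, viewed as an operator on functions over $\mathbb{S}^n$ via the radial parametrization, is second order elliptic with zeroth-order coefficient whose sign is controlled by $\partial_\rho(\rho^k f_t)$, nonpositive by (\ref{4.2}); this makes the operator injective and hence invertible. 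Closedness requires uniform $C^{3,\alpha}$ estimates. For the $C^0$ pinching $r_1\le\rho_t\le r_2$: if $\rho_t$ attained its minimum at $x_0$ with value less than $r_1$, then at the corresponding $X_0=\rho_t(x_0)x_0$ (where the outer normal is radial) the principal curvatures of $M_t$ satisfy $\kappa_i\le 1/\rho_t$, giving $\sigma_2(\kappa)\le \sigma_2(1,\ldots,1)/\rho_t^2$, while (\ref{4.2}) propagates (\ref{4.1}) radially inward to give $f_t(X_0,X_0/|X_0|)\ge\sigma_2(1,\ldots,1)/\rho_t^2$, forcing equality and, by a Hopf-type argument applied to $M_t$ against the sphere of the same radius, identifying $M_t$ with that sphere, which contradicts (\ref{4.1}); the outer inequality is symmetric. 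The $C^1$ bound is Lemma \ref{lemma 12}, the $C^2$ bound is Theorem \ref{theo2 11-0} (no boundary term since $M_t$ is closed), and since $\sigma_2=f_t\ge \inf f>0$ combined with the upper bound on $\kappa_i$ forces uniform ellipticity, Evans-Krylov plus Schauder complete the $C^{3,\alpha}$ bound. Hence $T=[0,1]$ and a solution at $t=1$ exists.

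For uniqueness, suppose $M_1=\{\rho_1(x)x\}$ and $M_2=\{\rho_2(x)x\}$ are two admissible starshaped solutions in the annulus, and let $x_0\in\mathbb{S}^n$ maximize $\rho_1/\rho_2$ with value $\lambda=\rho_1(x_0)/\rho_2(x_0)\ge 1$. Then the radial dilation $\lambda M_2$ touches $M_1$ from outside at $X_0=\rho_1(x_0)x_0$, and the first-order maximum condition forces $M_1$ and $\lambda M_2$ to share outer normal $\nu_0$ at $X_0$. A standard second-order comparison of the two tangent hypersurfaces in the $\Gamma_2$ sense gives $\kappa^{M_1}(X_0)\ge \kappa^{\lambda M_2}(X_0)=\kappa^{M_2}(X_0/\lambda)/\lambda$, so
\[
f(X_0,\nu_0)=\sigma_2(\kappa^{M_1}(X_0))\ge \lambda^{-2}\sigma_2(\kappa^{M_2}(X_0/\lambda))=\lambda^{-2}f(X_0/\lambda,\nu_0),
\]
i.e. $\rho_1(x_0)^2 f(X_0,\nu_0)\ge \rho_2(x_0)^2 f(X_0/\lambda,\nu_0)$. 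This is the opposite direction of what (\ref{4.2}) gives along the radial segment from $X_0/\lambda$ to $X_0$ at fixed normal $\nu_0$, so equality holds throughout; the strong maximum principle applied to the elliptic operator satisfied by $\rho_1/\rho_2$ then yields $\rho_1\equiv \rho_2$.

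The \textbf{main obstacle} is sharpening the two maximum-principle steps (the $C^0$ barrier argument and the uniqueness comparison), because (\ref{4.2}) is given only as a non-strict inequality: one must either invoke a Hopf-lemma argument to conclude that the degenerate equality case forces the hypersurface to coincide with a concentric sphere (which is then ruled out by (\ref{4.1})), or perturb $f_t$ to a strictly monotone version and pass to the limit using the uniform estimates. Once these rigidity statements are nailed down, everything else—openness, closedness, and the bookkeeping of the continuity method—reduces to routine consequences of Theorem \ref{theo2 11-0}, Lemma \ref{lemma 12}, and standard elliptic regularity.
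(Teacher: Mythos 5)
Your proposal follows essentially the same route as the paper: a continuity method with a perturbed family $f^t$ (a convex combination with a radial model carrying an $O(\varepsilon)$ term so that (\ref{4.1})--(\ref{4.2}) hold strictly for $t<1$), $C^0$ pinching from the barrier conditions at radial touching points, $C^1$ from Lemma \ref{lemma 12}, $C^2$ from Theorem \ref{theo2 11-0}, Evans--Krylov for closedness, openness via invertibility of the linearization, and uniqueness by the dilation--comparison argument of Caffarelli--Nirenberg--Spruck. The ``main obstacle'' you flag (non-strictness of (\ref{4.2}) in the two maximum-principle steps) is exactly the point the paper handles by the $\varepsilon$-perturbation for $t<1$ and otherwise defers to \cite{CNS5}, so your writeup is, if anything, more explicit than the paper's.
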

\begin{proof}
We use continuity method to solve the existence result.  For $0\leq t\leq 1$, according to \cite{CNS5}, we consider the family of functions,
\begin{eqnarray}
f^t(X,\nu)=tf(X,\nu)+(1-t)C_n^2[\frac{1}{|X|^k}+\varepsilon(\frac{1}{|X|^k}-1)],\nonumber
\end{eqnarray}
where $\varepsilon$ is sufficient small constant satisfying $$0< f_0\leq \min_{r_1\leq \rho\leq r_2} (\frac{1}{\rho^k}+\varepsilon(\frac{1}{\rho^k}-1)),$$ and $f_0$ is some positive constant.

At $t=0$, we let $X_0(x)=x$. It satisfies $\sigma_2(\kappa(X_0))=C^2_n$. It is obvious that $f^t(X,\nu)$ satisfies the barrier condition in the Introduction (1) and (2) with strict inequality for $0\leq t<1$. Suppose that $X_t$ is the solution of $f^t$. Then, at the maximum point of $\rho_t=|X_t|$, the outer normal direction $\nu_t$ is parallel to the position vector $X_t$.  If that point touches the sphere $|X|=r_2$,
then , at that point, $$\frac{C^2_n}{r^2_2}\leq \sigma_2(\kappa(X_t))=f(X_t,\frac{X_t}{|X_t|})< \frac{C_n^2}{r_2^2}.$$ It is a contradiction. That is $\rho_t\leq r_2$. Similar argument yields that $\rho_t\geq r_1$. $C^0$ estimate follows.

Since the outer normal direction $$\nu=\frac{\rho x-\nabla \rho}{\sqrt{\rho^2+|\nabla \rho|^2}},$$  replace $\rho$ by $t\rho$, $\nu$ does not change. The same argument in \cite{CNS5} gives the openness for $0\leq t <1$.

In view of Evans-Krylov theory, we only need gradient and $C^2$ estimate to complete the closedness part.  With the positive upper and lower bound for $\rho$, Lemma \ref{lemma 12} gives the gradient estimate.  The $C^2$ estimate follows from Theorem \ref{theo2 11}.

The proof of the uniqueness is same as in \cite{CNS5}.
\end{proof}

Now we consider the existence of convex solutions of equation (\ref{1.1}) for the general $k$.
\begin{lemm}\label{lemm 16}
For any strictly convex solution of equation \eqref{1.1} and $f\in C^2(\Gamma\times \mathbb S^n)$,  if $\rho$ have a upper bound,
then the global $C^2$ estimate (\ref{Mc2}) holds.
\end{lemm}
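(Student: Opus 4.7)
My plan is to reduce Lemma \ref{lemm 16} to the a priori estimate of Theorem \ref{theo 11}, the essential work being to convert the hypothesis ``$\rho \le R$'' plus strict convexity and the equation into the ``$M$ encloses a ball of radius $r_0>0$'' hypothesis required by that theorem.

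\textbf{Step 1 (Set-up).} Since $M$ is a closed strictly convex hypersurface with $\rho\le R:=\sup\rho$, it bounds a bounded convex body $\Omega\subset B_R(0)$. Let $B_r(p)\subset\Omega$ be its largest inscribed ball. Translate coordinates so that $p$ becomes the origin; because $f\in C^{2}(\Gamma\times \mathbb S^n)$ is defined in a full neighborhood of the unit normal bundle and does not depend intrinsically on the choice of origin, this shift preserves the equation (with slightly modified constants) and all hypotheses. After translation, $M$ encloses $B_r(0)$, and $\rho\le 2R$.

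\textbf{Step 2 (Inradius lower bound --- the main point).} Prove that $r\ge r_0(n,k,R,\inf f,\sup f)>0$. The heuristic: if $r$ is small, then $\Omega$ is thin in some direction $\nu_0$, and at the extremal point $P\in M$ with $\nu(P)=\nu_0$ the principal curvatures are forced to be small, so $\sigma_k(\kappa(P))$ is small, contradicting $\sigma_k(\kappa(P))=f(P,\nu_0)\ge\inf f>0$. To make this quantitative I would combine the following ingredients. First, Maclaurin's inequality bounds the Gauss curvature $\sigma_n\le c_{n,k}\sigma_k^{n/k}\le c_{n,k}(\sup f)^{n/k}$, so by Gauss--Bonnet $|\mathbb S^n|=\int_M\sigma_n\,dA$ one gets a lower bound on the surface area $|M|$. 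Second, Newton--Maclaurin inequalities allow one to compare $\sigma_{n-k}$ with $\sigma_k$, and the Minkowski/Kazdan-type integral identity
\[
\int_M\sigma_k\,dA = c_{n,k}\,V_{n-k}(\Omega)
\]
together with the Alexandrov--Fenchel/monotonicity inequalities for the intrinsic volumes ($V_j(B_r)\le V_j(\Omega)\le V_j(B_{2R})$) yield two-sided control of the geometric quantities of $\Omega$ in terms of the equation data. Running this through gives $r\ge r_0$. An alternative more elementary route is to invoke John's theorem to sandwich $\Omega$ between concentric ellipsoids of aspect ratio $n+1$ and do the computation on the ellipsoid, where $\sigma_k$ at the flat pole is explicitly a small power of the short semi-axis.

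\textbf{Step 3 (Conclusion).} With $r_0$ in hand, Theorem \ref{theo 11} applies in the translated coordinates to deliver (\ref{Mc2}), with constant depending on $n,k,R,\inf f,\|f\|_{C^2}$ as claimed.

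The main obstacle is Step 2. The naïve extremal-point argument, which tries to bound the principal curvatures at the top of the thin direction by $w/R^2$, fails because the Hessian of a convex graph is not controlled by its $L^\infty$ bound; one really does need an averaging or integral-geometric argument to extract the inradius bound from $\sigma_k=f$. Once that is set up, Steps 1 and 3 are straightforward applications of what has already been proved.
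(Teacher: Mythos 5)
Your overall plan coincides with the paper's: translate so that $M$ encloses a ball of uniform radius $r_0$, then quote Theorem~\ref{theo 11}.  Step~1 and Step~3 are fine (the paper phrases Step~3 as re-parametrizing $M$ as a radial graph $\rho'$ over a sphere centered at the new origin, using convexity to bound $\nabla\rho'$ by $\max\rho'$).  The problem is that Step~2 — the inradius lower bound — is where all the content of the lemma lies, and you have not actually proved it: the sentence ``Running this through gives $r\ge r_0$'' is a hope, not an argument, and you say so yourself in your closing paragraph.  So there is a genuine gap precisely at the step you flag as the main obstacle.

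Two more specific problems with the way you frame Step~2.  First, your heuristic is pointed in the wrong direction: you try to derive a contradiction from $\sigma_k\ge \inf f>0$ at the ``flat pole'' of a thin body, but (as you then observe) a thin convex body has \emph{large} curvature at its rim, not small curvature at its pole, so $\inf f$ gives no constraint; the correct mechanism uses $\sup f$ — an upper bound on $\sigma_k$ forces an upper bound on the Gauss curvature, and a convex body of bounded diameter and bounded Gauss curvature cannot degenerate.  Second, the tools you list (Gauss--Bonnet for $|M|$, Newton--Maclaurin, mixed-volume identities, Alexandrov--Fenchel) are the right toolbox, but the precise chain is missing.  The paper's argument, which you should supply, is short: since $M$ is strictly convex, write it via its support function $u$ on $\mathbb S^n$ and set $W_u=(u_{ij}+u\delta_{ij})$, whose eigenvalues are the principal radii of curvature; equation~(\ref{1.1}) becomes $\sigma_{n-k}(W_u)=f\,\sigma_n(W_u)$, so $\sigma_{n-k}(W_u)\le (\sup f)\,\sigma_n(W_u)$ pointwise.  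Multiplying by $u>0$ and integrating over $\mathbb S^n$ gives, in mixed-volume notation, $V_{n-k+1}(\Omega)\le C\,V_{n+1}(\Omega)$, where $V_{n+1}$ is the enclosed volume.  Combined with the Alexandrov--Fenchel isoperimetric inequality $V_{n+1}^{\frac{n-k+1}{n+1}}\le c\,V_{n-k+1}$, this yields $V_{n+1}^{-\frac{k}{n+1}}\le cC$, i.e.\ a positive lower bound for the volume; together with the diameter bound $\rho\le R$ this gives the inradius bound for the convex body, after which Theorem~\ref{theo 11} applies.  Your proposal would be complete once this chain is inserted in place of the hand-waving in Step~2.
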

\begin{proof}
First, we will prove that each convex hypersurface satisfying equation \eqref{1.1} contains some small ball  whose radius has a uniform positive lower bound.  Since our hypersurface is convex with an upper bound, we only need to prove that the volume of the domain enclosed by $M$ has a uniform lower positive bound. Let $u$ be the support function of the hypersurface $M$. Since $M$ is strictly convex, the support function $u$ can be viewed a function on the unit sphere.  Let,
$$V_k(M)=\int_{\mathbb{S}^n}\sigma_k(W_u).$$ Here we denote $(W_u)_{ij}=u_{ij}+u\delta_{ij}$. We can rewrite equation \eqref{1.1},
$$\sigma_{n-k}(W_u)=f\sigma_n(W_u)\leq C\sigma_{n}(W_u).$$  Hence,
$$\int_{\mathbb S^n}u\sigma_{n-k}(W_u)\leq C\int_{\mathbb S^n}u\sigma_n(W_u).$$ Therefore,
$$V_{n-k+1}(M)\leq CV_{n+1}(M).$$
Here $V_{n+1}$ is the volume of the domain enclosed by the hypersurface $M$.  By the isoperimetric type inequality of Alexsandrov-Frenchel,
$$V_{n+1}^{\frac{n-k+1}{n+1}}(M)\leq C V_{n-k+1}(M)\leq C V_{n+1}(M).$$ That is, the volume is bounded from below.

For any hypersurface $M$ satisfying \eqref{1.1}, we may assume that the center of the above unit ball is $X_M$.  Let $X-X_M=\rho' y$, where $y$ is another position vector of unit sphere. Obviously, $\rho'$ has positive upper and lower bound. We can view $M$ as a radial graph over the unit sphere centered at $X_M$. By the convexity assumption, $\nabla \rho'$ is bounded by
$\max_{\mathbb S^n} \rho'$. This gives the $C^1$ bound for $M$. Theorem \ref{theo 11} yields global $C^2$ estimate of $\rho'$. Thus, $C^2$ estimate of $\rho$ follows.
\qed

\medskip

\noindent{\bf Proof of Theorem \ref{k-exist}}.
The existence can be deduced by the degree theory as in \cite{gg}. Since the main arguments are the same, we only give an outline.  Consider an auxiliary equation,
\begin{eqnarray}\label{4.10}
\sigma_k(\kappa(X))=f^t(X,\nu),
\end{eqnarray}
where $$f^t=\big(tf^{\frac1k}(X,\nu)+(1-t)(C^k_n[\frac{1}{|X|^k}+
\varepsilon(\frac{1}{|X|^k}-1)])^{\frac1k}\big)^k.$$

By the assumptions in Theorem \ref{k-exist}, $f^t$ satisfies the structural condition in the Constant Rank Theorem (Theorem 1.2 in \cite{GLM1}). This implies the convexity of solutions to equation (\ref{4.10}). Lemma \ref{lemm 16} gives $C^2$ estimates. The Evans-Krylov Theorem yields a priori $C^{3,\alpha}$ estimates. To establish the existence, we only need to compute the degree at $t=0$.
It is obvious that, in this case, $\rho\equiv1$ is the solution.
Then the same computation in \cite{gg} yields the degree in non-zero.
Hence, we have the existence part of the theorem. The strictly convex follows from constant rank theorem in \cite{GLM1}.
\end{proof}

\section{Some examples}

Curvature estimate (\ref{Mc2}) is special for equation (\ref{1.1}). It fails for convex hypersurfaces in $\mathbb R^{n+1}$ for another type of fully nonlinear elliptic curvature equations. We construct such examples for hypersurfaces satisfying the quotient of curvature equation,
\begin{eqnarray}\label{ex}
\frac{\sigma_k(\kappa)}{\sigma_{l}(\kappa)}=f(X, \nu).
\end{eqnarray}

Choose a smooth function $u$ defined on sphere such that the spherical Hessian $$W_u=(u_{ij}+u\delta_{ij} )\in \Gamma_{n-1}$$ but
$\sigma_n(W_u(y_0))<0$ at some point $y_0\in \mathbb S^n$. The existence of such functions are well known (e.g., \cite{A}). Set $\tilde f=\sigma_{n-1}(W_u)$, so $f$ is a positive and smooth function. Set
$$u_t=(1-t)+tu.$$
We have $W_{u_t}\in \Gamma_{n-1}$ and
\begin{equation}\label{ft} \tilde f_t=\sigma_{n-1}(W_{u_t}),\end{equation} is smooth and positive. Obviously, when t is close to $0$, $W_{u_t}$ is positive definite. There is some $1>t_0>0$, such that $W_{u_t}>0$ for $t<t_0$, and   $$\det(W_{u_{t_0}}(x_0))=0,$$
for some $x_0\in \mathbb{S}^n$. Denote $\Omega_u$ to the convex body determined by its support function $u_t$, $0\le t<t_0$.

\noindent
{\bf Claim:}  {\it for each $0\le t<t_0$
after a proper translation of the origin, we have some positive constant $c_0$ independent of $t<t_0$ such that,
\begin{eqnarray}\label{utc0}
u_t(x)\geq c_0>0\ \ \text{ for }\forall x\in\mathbb{S}^n\ \ \text{ and } t<t_0. \end{eqnarray}
That is each $\Omega_{u_t}$ contains a ball of fixed radius, $t<t_0$. }

Let's first consider $k=n, l=1$ in equation (\ref{ex}). For $0\le t<t_0$, denote
\begin{equation}\label{mt}
M_t=\partial \Omega_{u_t}. \end{equation}
For each $0\le t<t_0$, $M_t$ is strictly convex. By (\ref{utc0}), we have uniform $C^1$ estimate for the radial function $\rho_t$, where $M_t=\{\rho_t(z) z | z\in \mathbb S^n\}$. We can rewrite the equation \eqref{ft},
\begin{equation}\label{ftt}\frac{\sigma_n}{\sigma_{1}}(\kappa_1,\cdots, \kappa_n)=\frac{1}{\tilde f_t(\nu)}.\end{equation} Since $\sigma_n(W_{u_{t_0}}(x_0))=0$, the principal curvature of $M_t$ will blow up at some points as $t\to t_0$. The uniform curvature estimate (\ref{Mc2}) for equation \eqref{ftt} can not hold.

We prove {\bf claim}. Fix $0\le t<t_0$, after a proper translation, we may assume the origin is inside the convex body $\Omega_{u_t}$. It follows from the construction,
$$\tilde f_t\geq c>0,$$ for some constant $c>0$ and for any $t<t_0,x\in \mathbb{S}^n$, and
\begin{eqnarray}\label{paper}
\|u_t\|_{C^3(\mathbb{S}^n)}\leq C,
\end{eqnarray}
where constant $C$ is independent of $t$. At the maximum value points $x_0^t$ of functions $u_t$, we have,
$$W_{u_t}(x_0^t)\leq u_{t}(x_0^t)I.$$ Hence, $$u_t(x_0^t)\geq \tilde f_t^{\frac{1}{n-1}}(x_0^t)\geq C>0.$$  Estimate (\ref{paper}) implies that there is some uniform radius
$R$ such that on the disc $B_{R}(x_0^t)$ with center at $x_0$,
$$u_t(x)\geq \frac{C}{2}>0, \forall x\in B_{R}(x_0^t).$$  By the Minkowski  identity,
$$\int_{\mathbb{S}^n}\sigma_n(W_{u_t})=c_n\int_{\mathbb{S}^n}u_t\sigma_{n-1}(W_{u_t})=c_n\int_{\mathbb{S}^n}u_tf_t\geq c_n\int_{B_R(x_0^t)\cap\mathbb{S}^n}u_t\tilde f_t\geq \tilde{c}>0.$$ Hence, there exists $y_0^t\in\mathbb{S}^n$ satisfying $$\sigma_n(W_{u_t}(y_0^t))\geq \frac{\tilde{c}}{\omega_n}.$$
By \eqref{paper}, there are some uniform radius $\tilde{R}>0$, such that for $y\in \mathbb{S}^n\cap B_{\tilde{R}}(y_0^t)$, we have,
$$W_{u_t}(y)\geq \frac{\tilde{c}}{2\omega_n} >0.$$ Hence, near the points $\nu^{-1}(y_0^t)$, the hypersuface $M_t$ is pinched by two fixed paraboloids locally and uniformly. Thus,
$\Omega_{u_t}$ contains a small ball whose radius has a uniform positive lower bound. Move the origin to the center of the ball, this yields (\ref{utc0}). The claim is verified.

\medskip

\noindent
{\it Proof of Theorem \ref{theo 11-counter}.} We use the some sequence $\{M_t\}$ defined in (\ref{mt}) to construct $f_t$ in (\ref{cur-qq}). For any $m=0, 1, \cdots, n-1$, for any $0\le t<t_0$, $\sigma_m(W_{u_t})\in C^{\infty}(\mathbb S^n)$. By (\ref{ft}), (\ref{paper}) and Newton-MacLaurin inequality, there exists $c>0$ independent of $t$,
\[ c \le \sigma_m(W_{u_t})\le \frac1c.\]
Since $\frac{\sigma_{k}}{\sigma_l}(\kappa_{M_t}) \equiv \frac{\sigma_{n-k}}{\sigma_{n-l}}(W_{u_t})$, there exists $a>0$ independent of $t$, such that for any $1\le l<k\le n$,
\[a\le \frac{\sigma_{k}}{\sigma_l}(\kappa_{M_t}) \le \frac1a.\]
$M_t$ satisfies equation
\[\frac{\sigma_{k}}{\sigma_l}(\kappa_{M_t})=\frac{\sigma_{n-k}}{\sigma_{n-l}}(W_{u_t})=f_t(\nu),\]
$f_t, \frac1{f_t}\in C^{\infty}(\mathbb S^n)$ and the norms of $\|f_t\|_{C^3(\mathbb S^n)}$ and
$\|\frac1{f_t}\|_{C^3(\mathbb S^n)}$ under control independent of $0\le t<t_0$. That is, $M_t$ satisfies conditions in theorem. The previous analysis on $M_t$ indicates that estimate (\ref{Mc2}) fails and the principal curvature of $M_t$ will blow up at some points when $t\to t_0$. \qed

\bigskip

\noindent {\it Acknowledgement:} The work was done while the second author was visiting McGill University. He would like to thank the China  Scholarship Foundation for their support. He would also like to thank McGill University for their hospitality.

\end{document}